\documentclass[11pt]{article}

\usepackage[utf8]{inputenc}
\usepackage[T1]{fontenc}
\usepackage{scalefnt}
\usepackage{natbib}
\usepackage{amsfonts}
\usepackage{mathtools}
\usepackage{amssymb}
\usepackage{amsmath}
\usepackage{amsthm}
\usepackage{xcolor}
\definecolor{mydarkblue}{rgb}{0,0.08,0.85}
\usepackage[colorlinks=true,bookmarks=true,linkcolor=mydarkblue,urlcolor=mydarkblue,citecolor=mydarkblue,breaklinks=true]{hyperref}
\usepackage{breakcites}
\usepackage{enumitem}
\usepackage{cases}
\usepackage{commath}
\usepackage{algorithm}
\usepackage{algpseudocode}
\usepackage{interval}
\intervalconfig{soft open fences}
\usepackage{xfrac}
\usepackage{tikz}
\usepackage{tikz-cd}
\usetikzlibrary{matrix}
\usepackage{makecell}
\usepackage{comment}

\usepackage{sectsty}
\makeatletter\def\@seccntformat#1{\protect\makebox[0pt][r]{\csname the#1\endcsname\hspace{12pt}}}\makeatother

\usepackage[verbose=true,letterpaper]{geometry}
\AtBeginDocument{
  \newgeometry{
    textheight=9in,
    textwidth=6in,
    top=1in,
    headheight=12pt,
    headsep=25pt,
    footskip=30pt
  }
}

\usepackage{booktabs}
\usepackage{multirow}
\usepackage{utfsym}

\usepackage[most]{tcolorbox}
\tcbset{
  assumptionstyle/.style={
    enhanced,
    colback=blue!5!white,
    colframe=blue!40!black,
    boxrule=0.4pt,
    arc=2pt,
    outer arc=2pt,
    left=6pt,
    right=6pt,
    top=4pt,
    bottom=4pt,
    fonttitle=\bfseries,
    coltitle=black,
    before skip=0.8\baselineskip,
    after skip=0.8\baselineskip
  }
}
\newtcolorbox{emphasisbox}[1][]{assumptionstyle,title={#1}}

\newtheorem{theorem}{Theorem}[section]
\newtheorem{proposition}[theorem]{Proposition}
\newtheorem{definition}[theorem]{Definition}
\newtheorem{lemma}[theorem]{Lemma}
\newtheorem{corollary}[theorem]{Corollary}
\newtheorem{remark}[theorem]{Remark}
\newtheorem{example}[theorem]{Example}

\newcommand{\reals}{{\mathbb R}}

\newcommand{\grad}{\nabla}

\newcommand{\Exp}{\mathrm{Exp}}
\newcommand{\Log}{\mathrm{Log}}
\newcommand{\inj}{\mathrm{inj}}

\DeclareMathOperator{\dist}{dist}

\DeclareMathOperator{\Proj}{Proj}
\DeclareMathOperator{\codim}{codim}

\DeclarePairedDelimiterX{\inner}[2]{\langle}{\rangle}{#1, #2}
\DeclarePairedDelimiterX{\innerBig}[2]{\Big\langle}{\Big\rangle}{#1, #2}
\DeclarePairedDelimiterX{\innerbig}[2]{\big\langle}{\big\rangle}{#1, #2}
\DeclarePairedDelimiterX{\sinner}[3]{#3\langle}{#3\rangle}{#1, #2}

\newcommand{\D}{\mathrm{D}}

\newcommand{\TODO}[1]{{\color{red}{[#1]}}}

\newcommand\restrold[2]{{
    \left.\kern-\nulldelimiterspace
      #1
    \right|_{#2}
  }}

\makeatletter
\newcommand{\normalscaling}{\bBigg@{0}}

\newcommand{\abitbig}{\bBigg@{1}}
\makeatother

\newcommand\restr[3]{{
    #3.\kern-\nulldelimiterspace
    #1
    #3|_{#2}
  }}

\newcommand{\aref}[1]{\hyperref[#1]{A\ref{#1}}}
 
\newcommand{\cref}[1]{\hyperref[#1]{C\ref{#1}}}
 
\newcommand{\pl}{\ensuremath{\text{P\L}}}

\newcommand{\calE}{\mathcal{E}}
\newcommand{\calF}{\mathcal{F}}

\newcommand{\calM}{\mathcal{M}}
\newcommand{\calN}{\mathcal{N}}

\newcommand{\Hess}{\nabla^2}
\newcommand{\Rd}{\reals^d}
\newcommand{\Rn}{\reals^n}
\newcommand{\Rk}{\reals^k}
\newcommand{\T}{\mathrm{T}}
\newcommand{\N}{\mathrm{N}}

\usepackage{scalerel,stackengine}

\newcommand{\im}{\operatorname{im}}
\newcommand{\ds}{\mathrm{d}s}
\newcommand{\dt}{\mathrm{d}t}
\newcommand{\dtau}{\mathrm{d}\tau}
\newcommand{\ddt}{\frac{\mathrm{d}}{\dt}}
\newcommand{\dds}{\frac{\mathrm{d}}{\ds}}
\newcommand{\Ddt}{\frac{\mathrm{D}}{\dt}}

\newcommand{\Rnn}{\reals^{n \times n}}

\newcommand{\Id}{\operatorname{Id}}

\newcommand{\Sone}{\mathbb{S}^1}
\newcommand{\Stwo}{\mathbb{S}^2}

\title{Smooth, globally Polyak--{\L}ojasiewicz functions are\\nonlinear least-squares}

\author{
Nicolas Boumal\thanks{Institute of Mathematics, EPFL, Lausanne, Switzerland. 
\texttt{\{nicolas.boumal,quentin.rebjock\}@epfl.ch}}
\and
Christopher Criscitiello\thanks{The Wharton School, University of Pennsylvania, USA. 
\texttt{crisciti@wharton.upenn.edu}}
\and
Quentin Rebjock\footnotemark[1]
}

\date{Compiled on \today}

\begin{document}

\maketitle

\begingroup
\renewcommand\thefootnote{}
\footnotetext{Authors listed alphabetically.}
\endgroup

\begin{abstract}
  The Polyak--{\L}ojasiewicz ({P\L}) condition is often invoked in nonconvex optimization because it allows fast convergence of algorithms beyond strong convexity.
  A function $f \colon \mathcal{M} \to \mathbb{R}$ on a Riemannian manifold $\mathcal{M}$ is globally {P\L} if $\|\nabla f(x)\|^2 \geq 2\mu(f(x) - f^*)$ for all $x$, where $f^* = \inf f$ and $\mu > 0$. How much does this pointwise, first-order inequality constrain $f$ and its set of minimizers $S$?

  We show that if $f$ is also smooth ($C^\infty$) and $\mathcal{M}$ is contractible (e.g., if $\mathcal{M} = \mathbb{R}^n$), then the {P\L} condition imposes a firm global structure:
  such a function is necessarily of the form $f(x) = f^* + \|\varphi(x)\|^2$ (a nonlinear sum of squares) where $\varphi \colon \mathcal{M} \to \mathbb{R}^k$ is a submersion, and $k$ is the codimension of $S$ in $\mathcal{M}$.
  The proof hinges on showing that the end-point map of negative gradient flow on $f$ is a trivial smooth fiber bundle over $S$.

  This rigidity leads to a striking dichotomy.
  Either $S$ is diffeomorphic to a Euclidean space, in which case $f$ can be transformed into a convex quadratic by a smooth change of coordinates.
  Or $S$ must display genuinely exotic geometry; for example, it can be diffeomorphic to the Whitehead manifold.

  As a further consequence, we show that there exists a complete Riemannian metric on $\calM$ under which $f$ remains {P\L} and becomes geodesically convex.
\end{abstract}


\paragraph{Keywords:} gradient dominated functions; Morse--Bott; Kurdyka--{\L}ojasiewicz inequality.

%


\section{Introduction} \label{sec:intro}

This paper is about real-valued functions on a Riemannian manifold $\calM$.
In many cases of interest,\footnote{Find a blog post (companion to this article) focused on $\calM = \Rn$ at \href{https://www.racetothebottom.xyz/posts/globalPL}{racetothebottom.xyz/posts/globalPL}.} $\calM$ is simply the Euclidean space of dimension $n$, which we denote by $\Rn$.
Our contributions are new for that case too.
Beyond $\Rn$, we use the following conventions.

\begin{emphasisbox}
  By default, $\calM$ is a smooth Riemannian manifold of finite dimension $n$ that is non-empty, connected and complete.
  Manifolds (and submanifolds) are understood to be without boundary, and \emph{smooth} means $C^\infty$.
\end{emphasisbox}

A function $f \colon \calM \to \reals$ is said to be \emph{globally Polyak--{\L}ojasiewicz} with parameter $\mu > 0$ if it is differentiable and it satisfies the inequality
\begin{align}
  f(x) - f^* \leq \frac{1}{2\mu} \|\grad f(x)\|^2
  \tag{P{\L}}
  \label{eq:PL}
\end{align}
for all $x \in \calM$, where $f^* := \inf_{x \in \calM} f(x)$, $\grad f$ is the gradient of $f$, and $\|\cdot\|$ is the norm on the appropriate tangent space.
If so, we say $f$ is \emph{globally $\mu$-\pl{}} or \emph{globally \pl{}}.

This class includes strongly convex functions as well as many nonconvex ones (see below).
They are of significant interest across various areas of mathematics, and accordingly have been extensively studied.\footnote{Close to a thousand papers mentioning ``Polyak--{\L}ojasiewicz'' are listed on \href{https://scholar.google.com/scholar?hl=en&as_sdt=0\%2C5&as_ylo=2025&as_yhi=2025&q=\%22polyak+lojasiewicz\%22&btnG=}{Google Scholar} for 2025 alone.}
For example, \pl{} functions are often considered in optimization, in part because they allow for non-isolated minimizers, while enabling appreciable convergence guarantees for various algorithms beyond the strongly convex case~\citep{polyak1963gradientmethods,nesterov2006cubic,karimi2016linearPL}.
Just as importantly, they occur in several applications, including control~\citep{fazelLQR2018} and statistics~\citep{chewi20a}.
We refer the reader to Section~\ref{sec:relatedwork} for more context.

As we work to understand what a globally \pl{} function $f$ may look like, it is instructive to consider its \emph{set of critical points $S$}.
It is clear from~\eqref{eq:PL} that this coincides with the set of global minimizers of $f$:
\begin{align}
    S = \big\{ x \in \calM : \grad f(x) = 0 \big\} = \big\{ x \in \calM : x \textrm{ is a global minimizer of } f \big\}.
    \label{eq:S}
\end{align}
This set is non-empty, that is, $f$ attains the value $f^*$ (see the classical Lemma~\ref{lemma:bound-trajectories-QG} below).

\subsection{A not-so-particular case: nonlinear least-squares}
\label{sec:particularcaselstsq}

Beyond strongly convex functions, standard examples consist in functions of the form
\begin{align*}
  f(x) = \frac{1}{2} \|F(x) - b\|^2 && \textrm{ with } && F \colon \calM \to \Rk.
\end{align*}
Minimizing such a function $f$ is called a \emph{nonlinear least-squares} problem (a staple of applied mathematics).
The gradient of $f$ is $\nabla f(x) = \D F(x)^*[F(x) - b]$, where $\D F(x)^*$ denotes the adjoint of the differential of $F$ at $x$.
Let $\sigma(x)$ denote the $k$th singular value of $\D F(x)^*$, so that $\|\nabla f(x)\| \geq \sigma(x) \sqrt{2f(x)}$.
Since $f^* := \inf_x f(x) \geq 0$, this can be restated as
\begin{align*}
  f(x) - f^* \leq f(x) \leq \frac{1}{2\sigma(x)^2} \|\nabla f(x)\|^2.
\end{align*}
Therefore, if (but not only if) $\sigma := \inf_x \sigma(x)$ is positive, we see that $f$ is globally \pl{}.
One of our contributions is to show that, in fact, \emph{all} smooth globally \pl{} functions on $\Rn$ (in particular) are of that form.

\subsection{Characterization of smooth globally \pl{} functions}

We focus on smooth ($C^\infty$) globally \pl{} functions.
Our strongest findings hold when $\calM$ is contractible (which includes $\calM = \Rn$, see Definition~\ref{def:contractible}).
In that setting, we show:
\begin{itemize}
  \item \emph{All} smooth, globally \pl{} functions on $\calM$ are of the special form $f(x) = f^* + \|\varphi(x)\|^2$ where $\varphi \colon \calM \to \Rk$ is a smooth submersion for some $k \geq 0$.
  See Theorem~\ref{thm:plnonlinlstsq}.
  Thus, $f$ is necessarily a \textbf{nonlinear least-squares} function, as in Section~\ref{sec:particularcaselstsq}.
  \item The possible \textbf{sets of minimizers} are clearly characterized:
  \begin{itemize}
    \item For such a function $f$, the set of minimizers $S$ is a contractible smooth manifold (properly embedded in $\calM$)---this follows from both classical and recent results.
    \item The other way around, if $\tilde S$ is \emph{any} contractible smooth manifold, then for each $n > \dim(\tilde S)$ there exists a smooth, globally \pl{} function $f$ on $\Rn$ whose set of minimizers is diffeomorphic to $\tilde S$.
    See Corollary~\ref{cor:equivonS}.
    \item In particular, this means $S$ can be diffeomorphic to a Euclidean space, but it can also be diffeomorphic to an exotic $\reals^4$ or the Whitehead manifold (among others).
    \item If (and only if) $S$ is diffeomorphic to a Euclidean space, there exists a diffeomorphism $\xi \colon \calM \to \Rn$ such that $f(\xi^{-1}(y)) = f^* + y_{m+1}^2 + \cdots + y_n^2$ (with $m = \dim S$).
    See Corollary~\ref{cor:deformintoquadform}.
  \end{itemize}
  \item Such a function $f$ has \textbf{hidden convexity}, in the sense that there exists a complete Riemannian metric on $\calM$ such that $f$ remains globally \pl{} and it becomes geodesically convex.
  See Corollary~\ref{thm:pl-gconvex}.
\end{itemize}

To prove these results and a few more, we consider the map $\pi \colon \calM \to S$ which maps a given point $x$ to the end-point of negative gradient flow on $f$ initialized at $x$, and we show constructively that it defines a trivial smooth fiber bundle with additional control.
We expand on this next.

\subsection{The fiber bundle structure}

As a first step, we prove that \pl{} functions with a \emph{single} minimizer are nonlinear least-squares of a special kind.
This is a corollary to the more general Theorem~\ref{thm:globalsquaredistsinglecp} we prove in Section~\ref{sec:singleminimizer}.
One can think of it as a (presumably folklore) globalized Morse lemma.

The proof uses common techniques from differential topology.
It relies on the standard (local) Morse lemma, the Palais--Cerf theorem and negative gradient flow on $f$.
A point of attention in the proof is to ensure $\varphi$ is a \emph{global} diffeomorphism, including at $x^*$.

\begin{theorem}[easy case] \label{thm:globalsquaredistsinglecppl}
    Let $f \colon \calM \to \reals$ be smooth and globally~\eqref{eq:PL}.
    If $f$ has a unique critical point $x^*$, then there exists a diffeomorphism $\varphi \colon \calM \to \Rn$ such that $f(x) = f(x^*) + \|\varphi(x)\|^2$.
\end{theorem}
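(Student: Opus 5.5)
Without loss of generality set $f(x^*) = 0$, so that $f > 0$ away from $x^*$. The plan is to build $\varphi$ in two pieces: near $x^*$ from the Morse lemma, and everywhere else by transporting level sets along the negative gradient flow. Then we glue the pieces and repair smoothness at $x^*$.

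\textbf{Step 1 (nondegeneracy and the local model).} Since $f \ge 0$, $x^*$ is a minimizer. We claim $\nabla^2 f(x^*)$ is positive definite. Suppose instead that $v \neq 0$ satisfies $\nabla^2 f(x^*)[v]=0$. Then along $\Exp_{x^*}(tv)$ we would have $\|\nabla f\|^2 = o(t^2)$. Combined with \eqref{eq:PL}, this forces $f = o(t^2)$ along that curve. A standard Łojasiewicz-type argument near an isolated critical point then contradicts \eqref{eq:PL}. Alternatively, \eqref{eq:PL} gives quadratic growth $f(x) \ge \frac{\mu}{2}\dist(x,x^*)^2$ (Lemma~\ref{lemma:bound-trajectories-QG}). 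Expanding $f$ and $\nabla f$ to second order then yields $\nabla^2 f(x^*) \succeq \mu I$. With nondegeneracy in hand, the Morse lemma gives a chart $\psi\colon U \to \Rn$ around $x^*$ with $f = \|\psi\|^2$ on $U$. Fix $\epsilon>0$ small enough that $f^{-1}([0,\epsilon]) \subset U$. Then $L_\epsilon := f^{-1}(\epsilon)$ is diffeomorphic via $\psi$ to the round sphere of radius $\sqrt\epsilon$.

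\textbf{Step 2 (global flow).} Consider the normalized vector field $X = \nabla f/\|\nabla f\|^2$ on $\calM\setminus\{x^*\}$, which satisfies $\D f[X]=1$. The key claim is that its flow $\Phi_s$ is complete on $\calM\setminus\{x^*\}$ for levels in $(0,\infty)$. Forward in $s$, a trajectory could only escape to infinity in finite $f$-increase. This is ruled out by the arc-length bound
\begin{align*}
\int \|X\| \, ds = \int \frac{df}{\|\nabla f\|} \le \int \frac{df}{\sqrt{2\mu f}},
\end{align*}
which is finite on bounded $f$-intervals, together with completeness of $\calM$. Backward in $s$, the same bound shows trajectories stay bounded and reach each level $\epsilon>0$. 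Consequently $\Theta\colon L_\epsilon \times (0,\infty) \to \calM\setminus\{x^*\}$, defined by sending $(y,t)$ to the flow of $y$ to level $t$, is a diffeomorphism. Surjectivity follows because every $x$ with $f(x)>0$ flows down to $L_\epsilon$. Injectivity and smoothness follow from the flow being a flow.

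\textbf{Step 3 (gluing and smoothness at $x^*$).} Define $\varphi(x) = \sqrt{t}\,\psi(y)/\sqrt{\epsilon}$ for $x = \Theta(y,t)$, and $\varphi(x^*)=0$. This is a bijection $\calM\to\Rn$ with $\|\varphi\|^2 = f$, and it is smooth away from $x^*$. The main obstacle is smoothness at $x^*$, since inside $U$ the flow-defined map differs from $\psi$ by a radial reparametrization that need not be smooth at the origin. To handle this, first modify the metric near $x^*$ so that, inside $U$, the gradient of $f$ in the chart $\psi$ is the radial Euclidean field $2\psi$. Interpolating the pulled-back Euclidean metric with the original one via a partition of unity on $f^{-1}([0,\epsilon))$ keeps $\nabla f \ne 0$ off $x^*$ and keeps \eqref{eq:PL} true with a possibly smaller $\mu$. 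The completeness arguments above also survive, since they only need the bound on a compact region plus the original metric outside it. For the modified flow, flowing along $X$ inside $U$ is exactly radial scaling in $\psi$-coordinates. Hence $\varphi = \psi$ on $f^{-1}([0,\epsilon))$, which is smooth at $x^*$. Alternatively, replace $\psi$ on $L_\epsilon$ by a diffeomorphism to the sphere and invoke the Palais--Cerf theorem, so that any induced diffeomorphism of $\Rn$ can be isotoped to be linear near $0$. Either way $\varphi$ is a smooth bijection with invertible differential everywhere, hence a diffeomorphism, and $f = f(x^*) + \|\varphi\|^2$.
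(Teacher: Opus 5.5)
Your proof takes a genuinely different route from the paper at the crucial step. It is sound in substance, but one piece of bookkeeping in Step~3 must be repaired, and as written it undermines exactly the point Step~3 was meant to settle.

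\textbf{Comparison with the paper.} The paper keeps the metric and changes $f$. It gets coercivity from \eqref{eq:QG} and $\nabla^2 f(x^*)\succ 0$ from Lemma~\ref{lem:MB}. It then combines the Morse lemma with the Palais--Cerf theorem (including an orientation check) to build a global diffeomorphism $\psi$ of $\calM$ with $f\circ\psi=f(x^*)+\dist(\cdot,x^*)^2$ near $x^*$ (Lemma~\ref{lem:localsquaredistsinglecp}). For that function, the rescaled gradient flow of the \emph{original} metric runs along geodesics near $x^*$, so the flow-out map glues with $\Log_{x^*}$ (Lemma~\ref{lem:globalquadraticsinglecp}).

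You instead keep $f$ and change the metric near $x^*$. This is legitimate because the conclusion of the theorem is metric-free. Your Steps~2--3 only need a vector field $X$ on $\calM\setminus\{x^*\}$ with $\D f[X]=1$ whose flow sweeps all levels in $(0,\infty)$ and which equals the Morse-radial field $u\mapsto u/(2\|u\|^2)$ in $\psi$-coordinates near $x^*$. You do not even need a metric for this: with a bump function $\rho$, the field $X=\rho X_{\mathrm{rad}}+(1-\rho)\nabla f/\|\nabla f\|^2$ already satisfies $\D f[X]=1$.

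Your route therefore avoids Palais--Cerf and the orientation issue entirely, and is more elementary. The paper's route has the merit of working throughout with the gradient flow and normal coordinates of the given metric. Your arc-length bound is a fine substitute for the paper's coercivity-plus-cutoff argument in Lemma~\ref{lem:GFrescaledoncoerciveinvexM}. In Step~1 you can drop the vague ``{\L}ojasiewicz-type argument'': \eqref{eq:QG} gives $f(\Exp_{x^*}(tv))\ge\frac{\mu}{2}t^2\|v\|^2$, which directly contradicts $f=O(t^4)$ along a kernel direction.

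\textbf{The repair in Step~3.} As written, you interpolate the metrics inside $f^{-1}([0,\epsilon))$ but still use $L_\epsilon$ as the reference level. A flow line starting near $x^*$ is then radial only until it enters the transition annulus, which it must cross before reaching $L_\epsilon$. Near $0$ this gives $\varphi(\psi^{-1}(u))=\|u\|\,\Gamma(u/\|u\|)$ for some diffeomorphism $\Gamma$ of the unit sphere. A positively degree-one homogeneous map that is differentiable at $0$ is linear. So $\varphi$ is smooth at $x^*$ only if $\Gamma$ is orthogonal, which fails in general. Your claim that $\varphi=\psi$ on $f^{-1}([0,\epsilon))$ is false in that setup.

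The fix is to put the transition strictly above the reference level. Let the metric be the pulled-back Euclidean one on all of $f^{-1}([0,\epsilon])$, and interpolate on, say, $f^{-1}((\epsilon,2\epsilon))$ with $f^{-1}([0,2\epsilon])\subset U$. Then the flow from any level below $\epsilon$ up to $L_\epsilon$ is pure radial scaling in $\psi$-coordinates, and $\varphi=\psi$ on $f^{-1}([0,\epsilon))$ as you want.

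Finally, drop the ``Palais--Cerf alternative''; as stated it is not an argument. Without the modification, $\varphi\circ\psi^{-1}$ is not even a diffeomorphism at $0$, so there is no diffeomorphism of $\Rn$ to isotope. Any correction would also have to preserve norms to keep $\|\varphi\|^2=f$.
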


In particular, the existence of such a function $f$ implies that $\calM$ is diffeomorphic to Euclidean space.
That part could also be deduced (with some work) from more general results in differential topology such as~\citep[Lem.~3]{milnor1964difftopochapter} and also~\citep[Ex.~15 in \S1.2, p.~21]{hirsch1976differential} (which references \citep{brown1961monotoneunion} for the topological case).
Here, purposefully, we provide an explicit construction for a specific diffeomorphism that reveals the quadratic nature of $f$.

The newer part comes in Section~\ref{sec:generalcase}.
There, we allow $f$ to have more than one minimizer, that is, $S$~\eqref{eq:S} may not be a singleton.
We first show that $S$ necessarily is a submanifold of $\calM$.
Moreover, we show that if $\calM$ is contractible (e.g., $\calM = \Rn$), then $f$ is still a nonlinear least-squares of a special kind.
(Recall $\calM$ is complete and connected.)

\begin{theorem}[general case] \label{thm:plnonlinlstsq}
  Let $f \colon \calM \to \reals$ be smooth and globally~\eqref{eq:PL}.
  Its set $S$ of critical points is a connected smooth manifold, properly embedded in $\calM$.
  Let $k = n - \dim(S)$ be the codimension of $S$.

  Assume $\calM$ is contractible.
  Then there exists a diffeomorphism $\psi \colon \calM \to S \times \Rk$ of the form $\psi = (\pi, \varphi)$ such that $f(x) = f^* + \|\varphi(x)\|^2$, where $f^* = \inf f$.
  Moreover, if $f$ is not constant then $\calM$ is diffeomorphic to $\Rn$.
\end{theorem}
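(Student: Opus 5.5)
The plan is to run everything through the negative gradient flow of $f$ and its end-point map $\pi$.

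\textbf{Step 1: the flow converges.} Let $\Phi_t$ be the negative gradient flow. By \eqref{eq:PL} and the classical trajectory-length bound (Lemma~\ref{lemma:bound-trajectories-QG}), each trajectory has finite length, bounded by a multiple of $\sqrt{f(x)-f^*}$. Completeness then gives a limit $\pi(x)=\lim_{t\to\infty}\Phi_t(x)\in S$. The rate $f(\Phi_t(x))-f^*\le e^{-2\mu t}(f(x)-f^*)$ makes this convergence locally uniform in $x$.

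\textbf{Step 2: $S$ is a submanifold and $f$ is Morse--Bott.} Near any $x^*\in S$, \pl{} together with smoothness forces $f$ to be Morse--Bott along $S$: $S$ is locally a submanifold and the Hessian is nondegenerate on normal spaces. I would invoke the known local equivalence between \pl{} and Morse--Bott, which the paper calls the ``classical and recent results''. Near a Morse--Bott manifold, the stable-manifold theorem for normally hyperbolic invariant manifolds shows that $\pi$ is smooth near $S$ and is a submersion onto $S$. Smoothness of $\pi$ then propagates globally via $\pi=\pi\circ\Phi_T$, since $\Phi_T$ maps any compact set into a neighborhood of $S$ for large $T$. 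Connectedness of $S$ follows because $\pi$ is continuous, surjective and $\calM$ is connected. Properness of the embedding follows because $S=f^{-1}(f^*)$ is closed.

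\textbf{Step 3: a trivialization of $\pi$.} The fibers $F_p=\pi^{-1}(p)$ are invariant under the flow and contain the single critical point $p$. On each fiber, $f$ restricted to $F_p$ has gradient equal to $\grad f$, because $\grad f$ is tangent to fibers (fibers are flow-invariant). So $f|_{F_p}$ is \pl{} with a unique critical point, and Theorem~\ref{thm:globalsquaredistsinglecppl}, or really the more general Theorem~\ref{thm:globalsquaredistsinglecp}, gives a diffeomorphism $F_p\cong\Rk$ with $f=f^*+\|\cdot\|^2$.

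The difficulty is that completeness of $F_p$ as a Riemannian manifold is not immediate. I would therefore rerun the proof of the easy case directly, in families parametrized by $p$. First, apply the parametrized Morse lemma near $S$ to get normal coordinates $v\in\N_pS$ with $f=f^*+\|v\|^2$ on a tubular neighborhood, smooth in $p$. Next, extend outward along flow lines: a point $x$ with $f(x)-f^*=r^2$ flows to the level set $r_0^2$ and is identified with the corresponding point of the local model. This uses the rescaled flow of $\grad f/\|\grad f\|^2$, which is defined away from $S$ and is complete on $\{f>f^*\}$ thanks to \pl{}. The output is a diffeomorphism $\calM\cong E$, where $E$ is the total space of the normal bundle $\N S$, and under this diffeomorphism $f=f^*+\|v\|^2$ in a bundle metric.

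\textbf{Step 4: contractibility and the main obstacle.} It remains to trivialize the vector bundle $\N S\to S$. If $\calM$ is contractible, then $S$ is a deformation retract of $E\cong\calM$, so $S$ is contractible. Every vector bundle over a contractible base is trivial, and an orthonormal trivialization gives $\varphi$ with $\psi=(\pi,\varphi)$. For the final claim: if $f$ is nonconstant then $k\ge1$, and $S\times\Rk$ with $S$ contractible is diffeomorphic to $\reals^n$. For $k\ge 1$ I would cite the known result that $S\times\reals$ is diffeomorphic to Euclidean space (Stallings/McMillan type results) when $\dim S\ge 4$ or so, and treat the low dimensions directly, where contractible manifolds are already Euclidean in dimension $\le2$; dimension 3 follows from the 4D result.

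The main obstacle I expect is Step 3: making the Palais--Cerf and Morse-lemma constructions depend smoothly on $p\in S$ and globally match. In particular, I need to check that the level-set gluing produces a smooth map across $S$ itself, uniformly in $p$.
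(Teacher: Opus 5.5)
Steps 1, 2 and 4 are fine in outline. Deriving continuity of $\pi$ from the locally uniform bound $\dist(\Phi_t(x),\pi(x))\le e^{-\mu t}\sqrt{2(f(x)-f^*)/\mu}$ is a nice shortcut. But Step 3 carries the whole theorem, and it does not work as described; you also leave its crux open.

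Gluing a parametrized Morse model to the rescaled flow of $\grad f/\|\grad f\|^2$ requires the flow lines to be radial in the model coordinates, and they are not. If $f=f^*+\|v\|^2$ in coordinates where the metric is $G(v)$, the gradient is $2G(v)^{-1}v$. Already for $f=x_1^2+4x_2^2$ on $\reals^2$ with model $v=(x_1,2x_2)$, every trajectory off the $x_2$-axis reaches the origin tangent to the $x_1$-axis. So the chart-based and flow-based definitions disagree on the overlap, and the flow-based map alone is not differentiable at the minimizer (compare Figure~\ref{fig:flowlines}).

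In the single-minimizer case the paper cures exactly this by first applying a global diffeomorphism from Palais--Cerf so that $f$ becomes $\dist(\cdot,x^*)^2$ near $x^*$ (Lemma~\ref{lem:localsquaredistsinglecp}). Your route needs a version of this in families over a possibly non-compact $S$. You flag this but do not supply it, and it is not off-the-shelf. There are two further defects:
\begin{itemize}
\item For non-compact $S$ the Morse--Bott tube can have non-uniform width, so a fixed level set $\{f=f^*+r_0^2\}$ need not lie in the model region.
\item Your model coordinates sit on normal slices $\N_pS$, not on the fibers $\pi^{-1}(p)$. Even a successful gluing would then identify $\calM$ with $\N S$ through a projection different from $\pi$, so Step~4 would not produce $\psi=(\pi,\varphi)$ as the statement requires.
\end{itemize}

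The difficulty you cite to motivate the family construction is not real. $\pi^{-1}(p)$ is a level set of a submersion, hence properly embedded and complete in the induced metric. In any case $f|_{\pi^{-1}(p)}$ is coercive, because $\dist(y,p)\le\sqrt{2(f(y)-f^*)/\mu}$ for every $y$ in that fiber. The genuine issue is smooth dependence on $p$, and the paper sidesteps it.

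It applies Theorem~\ref{thm:globalsquaredistsinglecppl} to a \emph{single} fiber $\calF=\pi^{-1}(\bar x)$. It then carries every other fiber onto $\calF$ along horizontal lifts $\gamma'=\D\pi(\gamma)^\dagger[c']$ of the curves $c=H(\pi(y),\cdot)$ given by a smooth contraction $H$ of $S$ (Theorem~\ref{thm:pitrivial}). Horizontal velocities are orthogonal to the fibers while $\grad f$ is tangent to them, so $f$ is constant along the lifts. Quadratic growth plus completeness keeps the lifts in a compact set, so they exist on $[0,1]$. This yields $(\pi,\varphi_1)\colon\calM\to S\times\calF$ with $f=f\circ\varphi_1$, and composing with the single-fiber diffeomorphism finishes.

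If you want to keep your bundle route instead, one way to repair it is to drop Palais--Cerf. Use a Morse lemma with parameters along the fibers of $\pi$. Replace $\grad f/\|\grad f\|^2$ by a vector field $X$ tangent to the fibers with $\D f[X]=1$ that is radial in the model near $S$. Such fields form an affine space, so a partition of unity glues them. Switch between model and flow at a level that depends on $\pi(x)$.

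Finally, do not improvise the case $\dim S=3$, $k=1$ of the last claim. It needs McMillan and Luft plus Perelman, which is what Theorem~\ref{thm:deepthm2} packages.
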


Here too, a point of attention in the proof is to ensure $\psi$ is a \emph{global} diffeomorphism, including on $S$.
This is one part of why we were not able to obtain that result from existing literature---see related work below.

We detail implications (and partial converses) of this theorem in Section~\ref{sec:implications}.
Readily, we can observe that the only contractible complete Riemannian manifolds that admit a (not constant) smooth, globally \pl{} function are those that are diffeomorphic (though not necessarily isometric) to Euclidean space.

Before going to proof techniques, let us comment on the assumptions of Theorem~\ref{thm:plnonlinlstsq}:
\begin{itemize}
  \item We assume throughout that $f$ is $C^\infty$ smooth. 
  With some technical effort, this could be relaxed to $C^p$ with sufficiently large $p$.
  Note that $C^1$ regularity is insufficient, since the minimizer set $S$ may then fail to be a manifold.\footnote{For example, the function $f(x,y)=\frac{x^2y^2}{x^2+y^2}$ is $C^1$ and globally $\pl{}$ on $\mathbb{R}^2$, but it is not $C^2$ and its set of minimizers is a cross, which is not a manifold~\citep[Rem.~2.19]{rebjock2023nonisolated}.} 
  
  \item Likewise, the global \pl{} assumption could be relaxed to cater more precisely to the properties we use in the proof. That said, we should note that \emph{invexity} (that is, the property $\nabla f(x) = 0 \implies f(x) = f^*$) is \emph{not} enough.\footnote{For instance, $f(x, y)=(x^2y-x-1)^2+(x^2-1)^2$ is not \pl{} but it is smooth and invex. Its set of minimizers $S = \{(-1,0),(1,2)\}$ is disconnected, which is incompatible with the conclusions of Theorem~\ref{thm:plnonlinlstsq}.}
  
  \item Finally, \emph{some} condition on $\calM$ is indeed necessary to enable the final conclusions of the theorem. See Section~\ref{sec:commentcontractible} for indications that contractibility is a reasonable choice.
\end{itemize}

\noindent
We discuss these and more in the conclusions and perspective too (Section~\ref{sec:conclusions}).

\subsection{Proof technique} \label{sec:prooftechnique}

To prove Theorem~\ref{thm:plnonlinlstsq}, we begin our study of the \emph{end-point map} $\pi \colon \calM \to S$ in Section~\ref{sec:endpointmap}.
It maps each point $x \in \calM$ to $\pi(x)$, defined as the limit (as time goes to infinity) of negative gradient flow down $f$ when initialized at $x$.
In particular, we show that $\pi$ is continuous in order to argue that $\calM$ \emph{strongly deformation retracts} to $S$ (Definition~\ref{def:deformationretract}, Proposition~\ref{prop:picontinuous}).
This notably implies that $S$ is contractible if and only if $\calM$ is so.
(The construction of deformation retractions via gradient flows is classical~\citep{lojasiewicz1963propriete,kurdyka1998gradientsomin}.)

Using the fundamental theorem of flows together with recent results about the regularity of $S$ (see Lemma~\ref{lem:MB}) and a crucial theorem by \citet{falconer1983limitmapping} (which itself relies on the center stable manifold theorem, see~\citet{hirsch1977invariant}), we show that $S$ is a smooth manifold and that $\pi$ is a smooth submersion (Proposition~\ref{prop:Ssmoothpisubmersion}) with fibers (that is, pre-images $\pi^{-1}(x)$ of individual points $x \in S$) diffeomorphic to $\Rk$ (Proposition~\ref{prop:plrestrictedtofiber}).

This is enough to deduce that $\pi$ is a smooth fiber bundle (Definition~\ref{def:smoothfiberbundle}) owing to a result by~\citet[Cor.~31]{meigniez2002submersions}: see Corollary~\ref{cor:fiberbundlegeneral}.
From here, one might remember that a fiber bundle is \emph{trivial} if its base space is contractible~\citep[\S3.4B]{abraham1988manifolds}.

This is what prompts us to assume $S$ is contractible starting in Section~\ref{sec:fiberbundle}.
Under that assumption, we show that $\pi \colon \calM \to S$ is a \emph{trivial smooth fiber bundle}.
Doing so via the general results just stated would not retain control over the value of $f$.
Instead, we craft explicit trivializations of $\pi$ which are compatible with $f$ in a fruitful way (Theorem~\ref{thm:pitrivial}). 
Additionally, the trivialization is global if $S$ is contractible.
The construction is transparent, and does not require the aforementioned results.

To conclude, we use the fact that $f$ restricted to any fiber of $\pi$ is itself globally \pl{}, though with a unique minimizer (Proposition~\ref{prop:plrestrictedtofiber}).
This allows to conclude with the help of Theorem~\ref{thm:globalsquaredistsinglecppl}.
That last step for the proof of Theorem~\ref{thm:plnonlinlstsq} is given in Section~\ref{sec:proofplnonlinlstsq}, where we prove the more general Theorem~\ref{thm:plnonlinlstsqnocontractibility} that also covers the \emph{local} fiber bundle structure if $S$ is not contractible.

If $\calM$ is contractible and $f$ is not constant, then Theorem~\ref{thm:plnonlinlstsq} shows in particular that $\calM$ is diffeomorphic to $S \times \Rk$ with $k \geq 1$, and that $S$ itself is a contractible smooth manifold.
Under those conditions, $S \times \Rk$ (and hence $\calM$ itself) is diffeomorphic to $\Rn$---this follows from a deep theorem that results from a long line of work by many mathematicians:

\begin{theorem}[\citep{McMillan1961,stallings1962piecewise,luft1987contractible,Perelman20022003}] \label{thm:deepthm2}
  Let $\tilde S$ be a non-empty smooth manifold and fix $k \geq 1$.
  Then, $\tilde S \times \reals^k$ is diffeomorphic to $\reals^{\dim(\tilde S) + k}$ if and only if $\tilde S$ is contractible.
\end{theorem}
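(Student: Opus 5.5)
The plan is to prove the two directions separately: the ``only if'' direction is elementary, and the ``if'' direction reduces to known theorems after a case split on $m := \dim(\tilde S)$ and $n := m + k$.

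For the ``only if'' direction, suppose $\tilde S \times \reals^k \cong \reals^{n}$. The projection $\tilde S \times \reals^k \to \tilde S$ is a homotopy equivalence, with homotopy inverse $s \mapsto (s, 0)$, because $\reals^k$ deformation retracts to a point. Hence $\tilde S$ is homotopy equivalent to $\reals^n$, which is contractible, so $\tilde S$ is contractible.

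For the ``if'' direction, assume $\tilde S$ is contractible. First I will note that $\tilde S$ is connected. If $m \geq 1$, then $\tilde S$ is also non-compact: a closed $m$-manifold has $H_m(\,\cdot\,;\mathbb{Z}/2) \neq 0$, which is impossible for a contractible space. The low-dimensional cases are classical.
\begin{itemize}
  \item If $m = 0$, then $\tilde S$ is a point and there is nothing to prove.
  \item If $m = 1$, then $\tilde S \cong \reals$.
  \item If $m = 2$, the classification of non-compact surfaces, or uniformization plus simple connectivity, gives $\tilde S \cong \reals^2$.
\end{itemize}
In these three cases $\tilde S \times \reals^k \cong \reals^{n}$ directly.

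It remains to treat $m \geq 3$, so that $n \geq 4$.

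\emph{Case $n \geq 5$.} I will argue that $N := \tilde S \times \reals^k$ is simply connected at infinity. The complement of a compact set $K \subseteq \tilde S \times [-R,R]^k$ contains the set $\tilde S \times (\reals^k \setminus [-R,R]^k)$, and loops can be pushed into this region, into the $\reals$-direction. A van Kampen argument using $\pi_1(\tilde S) = 0$ then kills any loop there, since $\tilde S \times \{|t| > R\}$ is simply connected for $k = 1$ on each end, and the two ends are joined through $\tilde S \times [-R,R]$ away from $K$ because $\tilde S$ is non-compact. So $N$ is a contractible open $n$-manifold, $n \geq 5$, that is simply connected at infinity. By Stallings' theorem~\citep{stallings1962piecewise}, $N$ is PL-homeomorphic, hence homeomorphic, to $\reals^n$. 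Since $\reals^n$ has a unique smooth structure for $n \neq 4$ (Stallings), $N$ is diffeomorphic to $\reals^n$.

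\emph{Case $n = 4$, i.e., $m = 3$ and $k = 1$.} This is the main obstacle, because topological recognition no longer suffices to get a diffeomorphism. I will follow McMillan and Luft.
\begin{itemize}
  \item \citet{McMillan1961} shows that a contractible open $3$-manifold $\tilde S$ is an increasing union of cubes-with-handles $H_1 \subset H_2 \subset \cdots$, each contractible in the next. Taking the product with $\reals$ converts these into $4$-dimensional handlebodies.
  \item \citet{luft1987contractible} upgrades the resulting structure to a smooth exhaustion of $\tilde S \times \reals$ by smooth $4$-balls, each in the interior of the next. Such a nested union is diffeomorphic to $\reals^4$ by the standard argument that a collar between nested balls is $S^3 \times [0,1]$.
  \item Luft's argument needs the absence of fake homotopy $3$-cells, i.e., the Poincaré conjecture. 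This is now supplied by \citet{Perelman20022003}.
\end{itemize}

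Combining the cases gives the ``if'' direction and completes the proof.
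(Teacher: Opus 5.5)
Your outline follows the paper's: the same ``only if'' argument, the classical cases $m \le 2$, and Stallings' recognition theorem plus uniqueness of smooth structures on $\reals^n$ when $n \geq 5$. The paper simply cites \citep[Cor.~5.3]{stallings1962piecewise}, which rests on the simple-connectivity-at-infinity input you verify by hand, and treating $m = 3$, $k \geq 2$ this way rather than via Luft is fine. The remaining case uses McMillan--Luft--Perelman, as in the paper. However, the step you yourself identify as the crux of the case $n = 4$ is not actually supplied. \citet{luft1987contractible} gives a \emph{piecewise-linear} conclusion: $\tilde S \times \reals^k$ is PL homeomorphic to $\reals^{3+k}$, given that there are no fake $3$-cells. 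It does not give an exhaustion by smooth $4$-balls. Attributing the smooth statement to him leaves a PL-to-smooth gap in exactly the dimension where you note such passages are not automatic. The missing ingredient is smoothing theory. A PL homeomorphism between smooth manifolds (with their Whitehead triangulations) can be upgraded to a diffeomorphism when the relevant obstruction groups vanish, which they do in dimension $4$ (and trivially on a contractible manifold). The paper invokes \citep[Cor.~6.6]{MunkresObstructionstoSmoothing1960} for exactly this. With that step added, your nested-balls argument can be run in the PL category (PL annulus theorem) and then smoothed.

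A second, smaller flaw is in your proof that $\tilde S \times \reals^k$ is simply connected at infinity. You join the two ends $\tilde S \times \{t > R\}$ and $\tilde S \times \{t < -R\}$ ``because $\tilde S$ is non-compact'', but non-compactness is not enough. For $\tilde S = \reals$ the same reasoning would show that $\reals^2$ is simply connected at infinity, yet the complement of a rectangle is an annulus, because the two half-planes are joined through two separate strips. What you need is that $\tilde S$ has one end. This holds for contractible manifolds of dimension at least $2$, since Poincar\'e duality gives $H^1_c(\tilde S) \cong H_{m-1}(\tilde S) = 0$. You can then choose the compact set $A \subseteq \tilde S$ so that $\tilde S \setminus A$ is connected, by absorbing the relatively compact components of its complement into $A$. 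Van Kampen applied to the open cover of $N \setminus (A \times [-R,R])$ by $\tilde S \times (R,\infty)$, $\tilde S \times (-\infty,-R)$ and $(\tilde S \setminus A) \times \reals$ then shows that this complement is simply connected. For $k \geq 2$, apply this with $\tilde S \times \reals^{k-1}$ in place of $\tilde S$.
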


\noindent This classical theorem can be stated as: ``stabilizing a contractible smooth manifold by $\reals^k$ produces a Euclidean space.''
Appendix~\ref{app:deeptheorems} outlines how this comes as a consequence of the works cited above, and also \citep{Glimm1960,McMillanZeeman1962,Luft1967}.

\subsection{Implications and converses}\label{sec:implications}

We now examine several implications of Theorem~\ref{thm:plnonlinlstsq}, and some converses.

Throughout, $f \colon \calM \to \reals$ is globally \pl{} and smooth.
Its set of minimizers is $S$~\eqref{eq:S}, with dimension $m = \dim S$ and codimension $k = n - \dim S$.
We further assume $\calM$ is contractible. 
Table~\ref{tab:notation} summarizes recurring notation.

In Section~\ref{sec:Slookslike}, we provide a precise characterization of which manifolds $S$ can arise as minimizing sets of $f$.
Building on this, in Section~\ref{sec:SeqRk} we show that, surprisingly to us, $S$ need not be diffeomorphic to Euclidean space, and we point to sufficient conditions for that to happen anyway.
Finally, in Section~\ref{sec:gconvexintro}, we claim that $f$ is geodesically convex with respect to some complete Riemannian metric on $\calM$.
Some of the proofs are deferred to later sections or appendices.

\begin{table}[t]
\centering
\begin{tabular}{ll}
\toprule
Symbol & Meaning \\
\midrule
$\calM$ & smooth connected complete Riemannian manifold \\
$n$ & dimension of $\calM$ \\
$f \colon \calM \to \mathbb{R}$ & smooth globally Polyak--\L{}ojasiewicz function \\
$\mu$ & global P\L{} parameter \\
$f^*$ & global minimum value $\inf_{x\in \calM} f(x)$ \\
$S$ & set of global minimizers (critical set) of $f$ \\
$m$ & dimension of $S$ \\
$k$ & codimension of $S$ in $\calM$ ($k=n-m$) \\
$\nabla f$ & Riemannian gradient of $f$ \\
$\|\cdot\|$ & norm induced by the Riemannian metric \\
$\mathrm{dist}(\cdot,\cdot)$ & Riemannian distance \\
$\pi \colon \calM \to S$ & end-point map of negative gradient flow on $f$ \\
\bottomrule
\end{tabular}
\label{tab:notation}
\end{table}

\subsubsection{What can $S$ look like?}\label{sec:Slookslike}

Which manifolds can arise as minimizing sets of \pl{} functions on a contractible domain?
Theorem~\ref{thm:plnonlinlstsq} already tells us quite a lot: $S$ must be a contractible smooth manifold.\footnote{The full strength of Theorem~\ref{thm:plnonlinlstsq} is not needed here: Lemma~\ref{lem:MB} and Proposition~\ref{prop:picontinuous} suffice, as detailed early in the proof of Proposition~\ref{prop:Ssmoothpisubmersion}.}
In particular, $S$ cannot be diffeomorphic to a closed ball, sphere, or cylinder.
Moreover, since the only compact contractible smooth manifolds are singletons---this is a well-known fact, see Appendix~\ref{app:compactimpliespoint}---we obtain the following (also independently shown by~\citet{bennejma2025PLcompact}).
\begin{corollary}[compact $\implies$ point] \label{cor:compactSsingleton}
  Assume $\calM$ is contractible.
  Let $f \colon \calM \to \reals$ be smooth and globally \pl{}.
  If its set of minimizers $S$ is compact, then $S$ is a singleton.
  In particular, the conclusions of Theorem~\ref{thm:globalsquaredistsinglecppl} apply to $f$.
\end{corollary}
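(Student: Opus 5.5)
The plan is to combine Theorem~\ref{thm:plnonlinlstsq} with the fact that a compact contractible manifold is a point (Appendix~\ref{app:compactimpliespoint}). First, since $\calM$ is contractible, Theorem~\ref{thm:plnonlinlstsq} gives that $S$ is a connected smooth manifold, properly embedded in $\calM$, and that $\calM$ is diffeomorphic to $S \times \Rk$ with $k = \codim S$. As noted in Section~\ref{sec:prooftechnique}, $\calM$ strongly deformation retracts onto $S$ through the end-point map $\pi$ (Proposition~\ref{prop:picontinuous}). Hence $S$ is homotopy equivalent to $\calM$, and so $S$ is contractible. Alternatively, $S$ is a retract of the contractible space $S \times \Rk$ via the projection, which gives the same conclusion.

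Next, I would argue that a compact contractible smooth manifold without boundary, of dimension $m$, must have $m = 0$. If $m \geq 1$ and $S$ is compact, then $S$ is a closed manifold, so its top homology with $\mathbb{Z}/2$ coefficients is $H_m(S;\mathbb{Z}/2) \cong \mathbb{Z}/2 \neq 0$ (fundamental class mod $2$, which needs no orientability). This contradicts contractibility, which forces all reduced homology to vanish. So $m = 0$. Then $S$ is a compact $0$-dimensional manifold, hence a finite set of points. Since $S$ is connected, it is a single point.

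Finally, $S = \{x^*\}$ means $f$ has a unique critical point, by~\eqref{eq:S}. The hypotheses of Theorem~\ref{thm:globalsquaredistsinglecppl} then hold, and its conclusions apply: there is a diffeomorphism $\varphi \colon \calM \to \Rn$ with $f = f(x^*) + \|\varphi\|^2$.

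The only step with real content is the homological one, ruling out closed contractible manifolds of positive dimension. The paper defers this to an appendix as a well-known fact, so here I would simply cite the mod-$2$ fundamental class. Everything else is a direct application of earlier results.
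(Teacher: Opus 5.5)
Your proof is correct and follows essentially the same route as the paper. Both get contractibility of $S$ from the strong deformation retraction in Proposition~\ref{prop:picontinuous}; the paper uses only Lemma~\ref{lem:MB} and that proposition, so invoking Theorem~\ref{thm:plnonlinlstsq} is more than needed but not circular. Both then show a closed contractible manifold is a point via top homology, where the only difference is that your mod-$2$ fundamental class avoids the paper's step through simple connectivity $\Rightarrow$ orientability $\Rightarrow$ $H_m(S;\mathbb{Z}) \cong \mathbb{Z}$.
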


Additional considerations lead us to a complete characterization of the possible sets $S$ that can arise.
Theorem~\ref{thm:plnonlinlstsq} shows that if $\calM$ is contractible and admits a smooth, globally \pl{} function with minimizing set $S$, then there exists a diffeomorphism $\psi \colon \calM \to S \times \Rk$ satisfying $\psi(S) = S \times \{0\}$.
The next theorem provides a converse, and in fact does not require $\calM$ to be contractible. 
See Section~\ref{sec:corollaries} for a proof.

\begin{theorem}[Constructing globally \pl{} functions]\label{thm:constructingglobalPLcor}
  Let $S$ be a smooth embedded submanifold of a smooth manifold $\calM$.
  Suppose there exists a diffeomorphism
  \begin{align*}
    \psi \colon \calM \to S \times \reals^k && \text{satisfying} && \psi(S) = S \times \{0\}. 
  \end{align*}
  Then, for every Riemannian metric on $\calM$, there exists a smooth, globally \pl{} function $f \colon \calM \to \reals$ whose set of minimizers is $S$.
\end{theorem}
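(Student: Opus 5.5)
The plan is to build $f$ as the square of a function $q \ge 0$ that vanishes exactly on $S$ and satisfies $\|\grad q\| \geq 1$ off $S$. Then $f = q^2$ gives $\|\grad f\|^2 = 4 q^2 \|\grad q\|^2 \geq 4f$, which is~\eqref{eq:PL} with $\mu = 2$ and $f^* = 0$, and the set of minimizers is $\{q = 0\} = S$. We work in the coordinates provided by $\psi$: write $\psi(x) = (\pi(x), y(x))$ with $\pi \colon \calM \to S$ and $y \colon \calM \to \Rk$, and set $t(x) = \|y(x)\|$. The obvious candidate $q = t$ fails, because the given metric can make $\|\grad t\|$ arbitrarily small far out in either the base or the fiber directions. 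We fix this by a reparametrization $q(x) = G(\pi(x), t(x))$ that depends on the base point.

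\textbf{Radial field and a lower bound.} The key device is the radial vector field $V = \Dpsi^{-1}[(0, y)]$, i.e.\ $t\,\partial_t$ along the fibers of $\pi$ in the $\psi$-coordinates. It is tangent to the fibers, so $\D\pi(x)[V] = 0$. Hence any function of the form $G(\pi(x), t(x))$ has $\D q(x)[V] = \partial_t G(\pi(x), t(x))\, t(x)$, and therefore
\begin{align*}
  \|\grad q(x)\| \geq \frac{\D q(x)[V]}{\|V(x)\|} = \partial_t G(\pi(x), t(x)) \, \frac{t(x)}{\|V(x)\|}.
\end{align*}
Define $w(p, s) = \max_{\|\theta\| = 1} \big\|\Dpsi^{-1}(p, s\theta)[(0, \theta)]\big\|$, which equals $\|V\|/t$ at the point with coordinates $(p, s\theta)$. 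This $w$ is continuous and positive on $S \times [0, \infty)$, including at $s = 0$, since $\psi^{-1}$ is a diffeomorphism. It suffices to choose $\partial_t G(p, s) \geq w(p, s)$.

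\textbf{Smoothness.} We pick a smooth function $h \colon S \times \reals \to \reals$ with $h(p, s^2) > w(p, s)$ for all $p, s$. This is a routine choice: dominate the continuous function $(p, u) \mapsto \max_{s \le \sqrt{|u|}+1} w(p,s) + 1$ by a smooth function, using a partition of unity on $S \times \reals$. We then set
\begin{align*}
  G(p, s) = \int_0^s h(p, \sigma^2) \, \mathrm{d}\sigma .
\end{align*}
This $G$ is smooth and odd in $s$, so $G^2$ is smooth and even in $s$. By the classical Whitney theorem on even functions, $G(p,s)^2 = \Gamma(p, s^2)$ with $\Gamma$ smooth. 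Therefore $f(x) = \Gamma(\pi(x), \|y(x)\|^2)$ is smooth on all of $\calM$, including on $S$. Away from $S$, $q = G(\pi, t)$ is smooth and $\partial_t G \geq w$, so $\|\grad q\| \geq 1$ and thus $\|\grad f\|^2 = 4 q^2\|\grad q\|^2 \geq 4f$ there. On $S$ both sides of~\eqref{eq:PL} vanish. Since $G(p, s) > 0$ for $s > 0$, the minimizers are exactly $\{t = 0\} = \psi^{-1}(S \times \{0\}) = S$. Completeness of the metric is not needed.

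\textbf{Main obstacle.} The delicate step is the choice of direction in which to bound the gradient. A naive ansatz such as $q = a(\pi(x))\, t(x)$, or a partition-of-unity gluing of local models, produces cross terms involving $\grad(a \circ \pi)$ which the arbitrary metric can make dominant. Testing $\D q$ only against the fiber-tangent radial field $V$ kills all base-point derivatives of $G$ at once. The remaining technical care is in two places: the smooth domination $h(p, s^2) > w(p, s)$, and the evenness argument that makes $G^2$ smooth across $S$.
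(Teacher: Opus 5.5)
Your argument is correct, and its key device matches the paper's: the paper also tests the differential of $f$ only against the fiber-tangent radial vector $c_y'(1) = \D\psi^{-1}(\psi(y))[(0,\psi_2(y))]$. That vector is exactly your $V(y)$, and the paper likewise concludes with Cauchy--Schwarz.

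The two proofs differ in how $f$ is built so that this radial derivative dominates. The paper makes the canonical choice $f(y) = \int_0^1 \|c_y'(t)\|^2 \dt$, the energy in the given metric of the radial curve $c_y(t) = \psi^{-1}(\psi_1(y), t\psi_2(y))$ from $S$ to $y$. The self-similarity $c_{c_y(s)}(t) = c_y(ts)$ then gives the exact identity $\D f(y)[V] = f(y) + \|V(y)\|^2$, and \pl{} follows at once. That construction needs no majorant of the metric distortion and no partition-of-unity domination. It also needs no separate argument for smoothness across $S$, since the integrand is smooth in $y$.

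Your route instead absorbs the distortion into a hand-built profile $G$ with $\partial_s G \geq w$. It costs those two technical steps, but it yields more explicit structure, namely $f = q^2$ with $\|\grad q\| \geq 1$ off $S$, and it leaves freedom in the growth profile.

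Two small remarks. First, at the point with coordinates $(p, s\theta)$ one has $\|V\|/t \leq w(p,s)$ rather than equality; this is the direction you need. Second, you can skip the parametric Whitney theorem by noting $G(p,s) = s\,K(p,s^2)$ with $K(p,u) = \int_0^1 h(p,u\tau^2)\,\dtau$. Then $f = \|K(\pi, \|y\|^2)\, y\|^2$ is visibly smooth, and it is even of nonlinear least-squares form.
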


Together, Theorems~\ref{thm:plnonlinlstsq} and~\ref{thm:constructingglobalPLcor} (with the help of the classical Theorem~\ref{thm:deepthm2}) provide the sought characterization of $S$.

\begin{corollary}[Characterization of $S$, up to diffeomorphism]\label{cor:equivonS}
Let $\tilde{S}$ be a smooth manifold.
Fix $n > \dim(\tilde{S})$, and endow $\calM = \reals^n$ with a complete Riemannian metric $\inner{\cdot}{\cdot}$ (not necessarily the Euclidean one).
The following are equivalent:
\begin{itemize}
\item[(a)] $\tilde{S}$ is diffeomorphic to the minimizer set $S$ of a smooth function $f \colon \calM \to \reals$ which is globally \pl{} with respect to the given metric $\inner{\cdot}{\cdot}$.
\item[(b)] $\tilde{S}$ is contractible.
\end{itemize}
\end{corollary}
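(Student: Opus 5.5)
We prove the two implications separately. Throughout, $m = \dim(\tilde S)$ and $k = n - m \geq 1$.

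For (a) $\Rightarrow$ (b), let $f$ be smooth and globally \pl{} on $\calM = \Rn$ with respect to $\inner{\cdot}{\cdot}$, and suppose its minimizer set $S$ is diffeomorphic to $\tilde S$. Since the metric is complete and $\Rn$ is contractible, the standing assumptions of Theorem~\ref{thm:plnonlinlstsq} hold. That theorem gives a diffeomorphism $\calM \cong S \times \Rk$, so $S$ is homotopy equivalent to $\calM$ and is therefore contractible. Equivalently, Proposition~\ref{prop:picontinuous} shows that $\calM$ strongly deformation retracts onto $S$, which yields the same conclusion. Contractibility is invariant under diffeomorphism, so $\tilde S$ is contractible.

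For (b) $\Rightarrow$ (a), assume $\tilde S$ is contractible. Since $k \geq 1$, Theorem~\ref{thm:deepthm2} provides a diffeomorphism $\chi \colon \tilde S \times \Rk \to \Rn$.
\begin{itemize}
  \item Set $S := \chi(\tilde S \times \{0\}) \subset \Rn$. This is a smooth embedded submanifold of $\Rn$ diffeomorphic to $\tilde S$, because $\tilde S \times \{0\}$ is a properly embedded submanifold of $\tilde S \times \Rk$ and $\chi$ is a diffeomorphism.
  \item Identify $S$ with $\tilde S$ via $s \mapsto \chi(s,0)$. Then $\psi := \chi^{-1} \colon \Rn \to \tilde S \times \Rk \cong S \times \Rk$ is a diffeomorphism with $\psi(S) = S \times \{0\}$.
  \item Theorem~\ref{thm:constructingglobalPLcor}, applied with the given metric $\inner{\cdot}{\cdot}$, then produces a smooth function on $\Rn$ that is globally \pl{} with respect to $\inner{\cdot}{\cdot}$ and whose minimizer set is exactly $S \cong \tilde S$.
\end{itemize}

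The main obstacle would ordinarily be constructing $\psi$, that is, realizing an arbitrary contractible $\tilde S$, possibly exotic such as the Whitehead manifold, as a submanifold of $\Rn$ with a product structure around it. The deep Theorem~\ref{thm:deepthm2} handles this entirely. What remains is to check the hypotheses carefully:
\begin{itemize}
  \item $n > m$ is required, so that $k \geq 1$ and Theorem~\ref{thm:deepthm2} applies.
  \item Theorem~\ref{thm:constructingglobalPLcor} only needs some Riemannian metric, so the given one may be used directly.
  \item Completeness of $\inner{\cdot}{\cdot}$ is used only in the direction (a) $\Rightarrow$ (b), to invoke Theorem~\ref{thm:plnonlinlstsq}.
\end{itemize}
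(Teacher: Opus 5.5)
Your proposal is correct and follows essentially the same route as the paper. For (a)$\Rightarrow$(b), you use that $\calM$ and $S$ are homotopy equivalent, via Theorem~\ref{thm:plnonlinlstsq} or Proposition~\ref{prop:picontinuous}. For (b)$\Rightarrow$(a), you realize $\tilde S$ as the slice $\chi(\tilde S \times \{0\})$ through the diffeomorphism of Theorem~\ref{thm:deepthm2}, then apply Theorem~\ref{thm:constructingglobalPLcor} with the given metric.
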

\begin{proof}
  To show (a) implies (b), assume there exists a smooth globally \pl{} function $f \colon \calM \to \reals$ whose minimizer set $S$ is diffeomorphic to $\tilde{S}$.
  Since $\calM$ is contractible, so is $S$ (appeal to Theorem~\ref{thm:plnonlinlstsq} or Proposition~\ref{prop:picontinuous}), and therefore so is $\tilde{S}$.

  To show (b) implies (a), assume $\tilde{S}$ is contractible, and let $k = n - \dim(\tilde{S}) \geq 1$.
  By Theorem~\ref{thm:deepthm2}, there is a diffeomorphism $\tilde{\psi} \colon \reals^n \to \tilde{S} \times \reals^k$.
  Let $S = (\tilde{\psi})^{-1}(\tilde{S} \times \{0\})$.
  Since the restriction of a diffeomorphism remains a diffeomorphism, $S$ is a submanifold of $\reals^n$ and it is diffeomorphic to $\tilde{S}$.
  In particular, composing diffeomorphisms, we find there is a diffeomorphism $\psi \colon \reals^n \to S \times \reals^k$ such that $\psi(S) = S \times \{0\}$.
  Theorem~\ref{thm:constructingglobalPLcor} therefore implies the existence of a smooth globally \pl{} function $f \colon \calM \to \reals$ with minimizer set $S$.
\end{proof}

Beyond characterizing $S$ up to diffeomorphism, we may also study its possible \emph{embeddings} in $\calM$.
Theorem~\ref{thm:plnonlinlstsq} provides even more information in that regard.
For example, it rules out $S$ being a knotted line in $\reals^3$ (also known as a \emph{long knot}).
See Appendix~\ref{app:knottheory} for details.

\subsubsection{When is $f$ a pure quadratic, up to a change of variable?}\label{sec:SeqRk}

According to Theorem~\ref{thm:globalsquaredistsinglecppl}, if $S$ is a singleton, then $f$ can be deformed into a pure quadratic:
\begin{equation*}
  f(\varphi^{-1}(y)) \;=\; f^* + \|y\|^2, \quad \quad \forall y \in \reals^n.
\end{equation*}
A natural question is whether the same holds when $S$ is not a singleton.  
Specifically, given a globally \pl{} function $f \colon \calM \to \reals$, does there exist a diffeomorphism $\varphi \colon \calM \to \reals^n$ such that
\begin{equation} \label{eq:purequadraticQ}
  f(\varphi^{-1}(y)) \;=\; f^* + y_{m+1}^2 + \cdots + y_n^2, \qquad \forall y \in \reals^n,
\end{equation}
where $m = \dim S$?
If so, then
$
  S = \varphi^{-1}(\{y \in \reals^n : y_{m+1} = \cdots = y_n = 0\})
$
is diffeomorphic to $\reals^m$.
Is that always the case?

The answer is negative \emph{even if} $\calM = \Rn$.
In fact, there exist globally \pl{} functions whose sets of minimizers are \emph{not even homeomorphic} to any linear space, as we now explain.

In light of Corollary~\ref{cor:equivonS}, the question reduces to the following:
do there exist contractible smooth manifolds $S$ that are not homeomorphic to a Euclidean space?  
If $\dim S \leq 2$, the answer is no (Theorem~\ref{thm:deepthm1}).  
Beginning in dimension three, however, such manifolds exist.  
The first example was discovered by~\citet{Whiteheadmanifold1935}.
He had previously claimed that no such example could exist, but in the course of correcting this error he constructed the counterexample, now known as the \emph{Whitehead manifold}---see~\citep{wildwildwhitehead2019} for an exposition.
Subsequently, Mazur and others produced further examples~\citep{Mazurmanifolds1961}.  
The essential obstruction is that these manifolds are not \emph{simply connected at infinity} (see Remark~\ref{rem:simplyconnectedatinfinity}).

As a result, we have the following consequence of Corollary~\ref{cor:equivonS}.
\begin{corollary}[\pl{} functions with $S \not\cong \reals^m$] \label{cor:whitehead}
For every $m \geq 3$ and $n \geq m+1$, 
there exists a smooth globally \pl{} function on $\calM = \reals^n$ (with the Euclidean metric)
whose set of minimizers $S$ is an $m$-dimensional submanifold that is not homeomorphic to $\reals^m$.
\end{corollary}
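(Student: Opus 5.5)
The plan is to apply Corollary~\ref{cor:equivonS} with $\calM = \reals^n$ carrying the Euclidean metric, which is complete. The task then reduces to a purely topological one: for each $m \geq 3$, exhibit a contractible smooth $m$-manifold $\tilde S$ that is not homeomorphic to $\reals^m$. Once such an $\tilde S$ is in hand, implication (b)$\Rightarrow$(a) of Corollary~\ref{cor:equivonS} applies, since $n > m = \dim \tilde S$. It yields a smooth, globally \pl{} function $f$ on $\reals^n$ whose minimizer set $S$ is diffeomorphic to $\tilde S$. The set $S$ is then an embedded submanifold of dimension $m$ (Theorem~\ref{thm:plnonlinlstsq}), and it is not homeomorphic to $\reals^m$, because homeomorphism is preserved under diffeomorphism.

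It remains to construct $\tilde S$ in every dimension $m \geq 3$.

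\emph{Case $m = 3$.} Take $\tilde S = W$, the Whitehead manifold~\citep{Whiteheadmanifold1935}. It is an open subset of $\reals^3$, hence a smooth $3$-manifold. It is contractible, and it is not simply connected at infinity, whereas $\reals^3$ is (Remark~\ref{rem:simplyconnectedatinfinity}). Since simple connectivity at infinity is a topological invariant, $W$ is not homeomorphic to $\reals^3$.

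\emph{Case $m \geq 4$.} I would take $\tilde S = W \times \reals^{m-3}$ only if it still fails to be simply connected at infinity. That is false, however: by Theorem~\ref{thm:deepthm2}, $W \times \reals$ is already diffeomorphic to $\reals^4$. So this route fails and a different construction is needed. Instead, I would invoke the classical higher-dimensional analogues of Whitehead's construction, namely contractible open $m$-manifolds that are not simply connected at infinity, which exist for every $m \geq 3$ (e.g.\ the examples of Mazur and others~\citep{Mazurmanifolds1961}, and related constructions). One concrete option is the interior of a compact contractible $m$-manifold $C$ whose boundary is not simply connected, for $m \geq 4$ (Mazur-type manifolds, for instance built from a homology sphere with nontrivial fundamental group). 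The interior of $C$ is contractible. Its end is collared by $\partial C \times [0,1)$, so its fundamental group at infinity is $\pi_1(\partial C) \neq 1$. Hence it is not simply connected at infinity and therefore not homeomorphic to $\reals^m$.

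The main obstacle is this existence statement in each dimension $m \geq 4$. The naive approach of taking products with $\reals$ is defeated precisely by Theorem~\ref{thm:deepthm2}. I would therefore rely on the literature: for every $m \geq 4$ there is a compact contractible $m$-manifold with non-simply-connected boundary, obtained for example by taking the Poincaré homology sphere $P$ (or any homology sphere with nontrivial $\pi_1$), noting that it bounds a contractible manifold after suitable stabilization, and using Kervaire's theorem that in dimensions $\geq 5$ any homology sphere bounds a contractible manifold. Together with the dimension-$3$ case via Whitehead, and Mazur's $4$-dimensional examples, this covers all $m \geq 3$ and completes the proof.
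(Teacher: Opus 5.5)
Your proposal is correct and follows the paper's proof, which likewise takes a contractible smooth $m$-manifold not homeomorphic to $\reals^m$ (citing Whitehead and Mazur) and applies the implication (b)$\Rightarrow$(a) of Corollary~\ref{cor:equivonS}. Your extra discussion of existence goes beyond what the paper spells out, including the correct observation that $W \times \reals^{m-3}$ fails by Theorem~\ref{thm:deepthm2} and the use of interiors of compact contractible manifolds with non-simply-connected boundary; the only thing to tidy is the literature step for $m \geq 5$, since $P$ itself bounds no smooth contractible $4$-manifold, so you want homology $(m-1)$-spheres with fundamental group $\pi_1(P)$ (e.g.\ the boundary of $(P \setminus \text{open ball}) \times D^{m-3}$), which bound contractible manifolds by Kervaire's theorem, and the case $m = 5$ needs that theorem's version for homology $4$-spheres.
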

\begin{proof}
Choose a contractible smooth $m$-dimensional manifold $\tilde{S}$ not homeomorphic to a linear space~\citep{Whiteheadmanifold1935,Mazurmanifolds1961}.
Invoke Corollary~\ref{cor:equivonS}.
\end{proof}

On the other hand, if we assume $S$ is diffeomorphic to a Euclidean space, then so is $\calM$, and $f$ can indeed be deformed into a pure quadratic.
\begin{corollary}[$f$ deforms to a quadratic iff $S \cong \reals^m$] \label{cor:deformintoquadform}
  Let $f \colon \calM \to \reals$ be smooth and globally \pl{}.
  If (and only if) its set of minimizers $S$ is diffeomorphic to $\reals^m$, there exists a diffeomorphism $\xi \colon \calM \to \reals^n$ such that
  \begin{align*}
    f(\xi^{-1}(y)) \;=\; f^* + y_{m+1}^2 + \cdots + y_n^2, \quad \quad \forall y \in \reals^n.
  \end{align*}
\end{corollary}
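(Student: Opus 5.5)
The plan is to treat the two directions separately, and in the ``if'' direction to obtain the contractibility needed for Theorem~\ref{thm:plnonlinlstsq} from $S$ rather than assume it of $\calM$.

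For the ``only if'' direction, suppose such a diffeomorphism $\xi$ exists. Then $S = \{x : f(x) = f^*\} = \xi^{-1}(\{y : y_{m+1} = \cdots = y_n = 0\})$. Hence $\xi$ restricts to a diffeomorphism from $S$ onto the linear subspace $\reals^m \times \{0\}$, which is diffeomorphic to $\reals^m$. Here $m$ is taken as $\dim S$, and the identity holds by construction. This direction is immediate.

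For the ``if'' direction, assume $S$ is diffeomorphic to $\reals^m$. The statement does not assume $\calM$ is contractible, so I will first establish contractibility. By Proposition~\ref{prop:picontinuous}, $\calM$ strongly deformation retracts onto $S$, so $\calM$ is homotopy equivalent to $S \cong \reals^m$, which is contractible. Hence $\calM$ is contractible, and Theorem~\ref{thm:plnonlinlstsq} applies. It gives a diffeomorphism $\psi = (\pi, \varphi) \colon \calM \to S \times \reals^k$ with $f(x) = f^* + \|\varphi(x)\|^2$ and $k = n - m$.

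Next, fix a diffeomorphism $\theta \colon S \to \reals^m$ and define
\begin{align*}
  \xi(x) = \big(\theta(\pi(x)), \varphi(x)\big) \in \reals^m \times \reals^k = \reals^n .
\end{align*}
This is the composition of $\psi$ with the diffeomorphism $\theta \times \Id$, so $\xi$ is a diffeomorphism. For $y = \xi(x)$, the last $k$ coordinates of $y$ are $\varphi(x)$. Therefore $f(\xi^{-1}(y)) = f^* + \|\varphi(x)\|^2 = f^* + y_{m+1}^2 + \cdots + y_n^2$.

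The only step with real content is verifying that Theorem~\ref{thm:plnonlinlstsq} applies, that is, that $\calM$ is contractible. The deformation retraction from Proposition~\ref{prop:picontinuous} settles this. The remaining steps are bookkeeping, so I expect no real obstacle beyond citing that proposition correctly.
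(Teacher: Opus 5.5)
Your proposal is correct and follows essentially the same route as the paper. For the ``only if'' direction you identify $S$ as the preimage of $\reals^m \times \{0\}$; for the ``if'' direction you get contractibility of $\calM$ from Proposition~\ref{prop:picontinuous}, apply Theorem~\ref{thm:plnonlinlstsq}, and compose $\psi = (\pi, \varphi)$ with a diffeomorphism $S \to \reals^m$ on the first factor.
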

\begin{proof}
  The ``only if'' part is clear: see the comment after~\eqref{eq:purequadraticQ}.
  Now for the other direction:
  since $S$ is contractible, so is $\calM$ (Proposition~\ref{prop:picontinuous}).
  Thus, Theorem~\ref{thm:plnonlinlstsq} provides a diffeomorphism $\psi \colon \calM \to S \times \reals^k$ of the form $\psi = (\pi, \varphi)$ such that $f(x) = f^* + \|\varphi(x)\|^2$ for all $x \in \calM$.
  Choose a diffeomorphism $\sigma \colon S \to \reals^m$ and let $\xi(x) := (\sigma(\pi(x)), \varphi(x))$.
  This is a diffeomorphism from $\calM$ to $\reals^m \times \reals^k = \reals^n$ by composition, and if $y = \xi(x)$, then $f(\xi^{-1}(y)) = f^* + \|\varphi(x)\|^2$ and $\varphi(x) = (y_{m+1}, \ldots, y_n)$.
\end{proof}

In particular, if $\calM$ is contractible, then $S$ is diffeomorphic to $\reals^m$ (and hence Corollary~\ref{cor:deformintoquadform} applies) whenever $\dim S \leq 2$, and also when $\dim S = 3$ or $\dim S \geq 5$ provided $S$ is simply connected at infinity: this follows from the (classical) Theorem~\ref{thm:deepthm1} (in appendix), which is a consequence of work by \citet{stallings1962piecewise}, \citet{HuschPrice1970} and \citet{Perelman20022003}.

Even if $f$ itself cannot be deformed to a pure quadratic, the ``lifted'' function
\begin{align}\label{eq:eqnforgextended}
g \colon \calM \times \reals \to \reals, \quad \quad g(x, t) = f(x),
\end{align}
which is also \pl{} (in the product metric) and has minimizer set $S \times \reals$, can always be deformed to a pure quadratic, provided $\calM$ is contractible.
\begin{corollary}
  Let $f \colon \calM \to \reals$ be smooth and globally \pl{} with minimizer set $S$.
  Define $g$ as in~\eqref{eq:eqnforgextended} and let $m = \dim S$.
  If (and only if) $\calM$ is contractible, there exists a diffeomorphism $\xi \colon \calM \times \reals \to \reals^{n+1}$ satisfying
  \begin{align*}
    g(\xi^{-1}(y)) = f^* + y_{m+2}^2 + \cdots + y_{n+1}^2, \quad \quad \forall y \in \reals^{n+1}.
  \end{align*}
\end{corollary}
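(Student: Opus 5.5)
We apply Theorem~\ref{thm:plnonlinlstsq} to $g$ on the product manifold $\calM \times \reals$, and then use the classical Theorem~\ref{thm:deepthm2} to straighten out the minimizer set $S \times \reals$.

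We first record the basic properties of $g$. Endow $\calM \times \reals$ with the product metric; it is complete, connected, of dimension $n+1$, and $g$ is smooth. The gradient is $\nabla g(x,t) = (\nabla f(x), 0)$, so $\|\nabla g(x,t)\|^2 = \|\nabla f(x)\|^2 \geq 2\mu (f(x) - f^*) = 2\mu(g(x,t) - g^*)$, with $g^* = f^*$. Hence $g$ is globally \pl{}, and its minimizer set is $S \times \reals$, of dimension $m+1$ and codimension $k = n - m$.

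For the ``only if'' direction, suppose such a $\xi$ exists. Then $\calM \times \reals$ is diffeomorphic to $\reals^{n+1}$, hence contractible. Since $\calM$ is a retract of $\calM \times \reals$ (via $x \mapsto (x,0)$ and the projection), $\calM$ is homotopy equivalent to $\calM \times \reals$, so $\calM$ is contractible.

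For the ``if'' direction, assume $\calM$ is contractible, so $\calM \times \reals$ is contractible too.
\begin{enumerate}
  \item Apply Theorem~\ref{thm:plnonlinlstsq} to $g$. This gives a diffeomorphism $\Psi = (\Pi, \Phi) \colon \calM \times \reals \to (S \times \reals) \times \reals^k$ with $g = f^* + \|\Phi\|^2$.
  \item Note that $S$ is contractible, by Proposition~\ref{prop:picontinuous} (or Theorem~\ref{thm:plnonlinlstsq} applied to $f$) together with the contractibility of $\calM$.
  \item Since $S$ is non-empty and contractible, Theorem~\ref{thm:deepthm2} with the factor $\reals^1$ yields a diffeomorphism $\sigma \colon S \times \reals \to \reals^{m+1}$.
  \item Set $\xi := (\sigma \circ \Pi, \Phi) \colon \calM \times \reals \to \reals^{m+1} \times \reals^k = \reals^{n+1}$. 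This is a composition of diffeomorphisms, hence a diffeomorphism. Writing $y = \xi(x,t)$, the last $k$ coordinates of $y$ are $\Phi(x,t)$, so $g(\xi^{-1}(y)) = f^* + y_{m+2}^2 + \cdots + y_{n+1}^2$.
\end{enumerate}

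The mathematical depth lies entirely in the results being invoked, namely Theorem~\ref{thm:plnonlinlstsq} and Theorem~\ref{thm:deepthm2}. What remains is to check their hypotheses. The delicate point is that Theorem~\ref{thm:deepthm2} needs $k \geq 1$, and here we use it with the stabilizing factor $\reals^1$, which is exactly what the extra variable $t$ provides. This is why the statement holds for $g$ even when $S \not\cong \reals^m$, in which case it fails for $f$ itself.

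Two degenerate cases need a brief check. If $f$ is constant, then $k = 0$ and we just need $S \times \reals = \calM \times \reals \cong \reals^{n+1}$, which is again Theorem~\ref{thm:deepthm2}. If $m = 0$, the same argument applies unchanged.
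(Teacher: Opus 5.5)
Your proof is correct and follows the paper's route. The paper applies Corollary~\ref{cor:deformintoquadform} to $g$ after using Theorem~\ref{thm:deepthm2} to obtain $S \times \reals \cong \reals^{m+1}$, and your steps 1 and 4 simply inline the proof of that corollary. In the ``only if'' part, a bare retract need not be homotopy equivalent to the ambient space, but $\calM \times \{0\}$ is a deformation retract of $\calM \times \reals$, and a retract of a contractible space is contractible anyway, so your conclusion stands.
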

\begin{proof}
  The ``only if'' part is trivial: if $\calM \times \reals$ is diffeomorphic to a linear space, then it is contractible; and it is homotopy equivalent to $\calM$ hence $\calM$ is contractible.
  The ``if'' part is a consequence of Corollary~\ref{cor:deformintoquadform} (applied to $g$) and of the classical Theorem~\ref{thm:deepthm2} applied to $S \times \reals$ (the set of minimizers of $g$).
\end{proof}

\subsubsection{Globally \pl{} functions are geodesically convex in some metric}\label{sec:gconvexintro}

A function $f \colon \calM \to \reals$ is said to be \emph{geodesically convex} (or \emph{g-convex}) if, along every geodesic segment $\gamma \colon [0,1] \to \calM$, the composition $f \circ \gamma$ is convex in the usual one-dimensional sense, that is,
\[
  f(\gamma(t))
  \;\leq\;
  (1-t) f(\gamma(0)) + t f(\gamma(1))
  \qquad\forall\, t \in [0,1].
\]
For an overview in the context of optimization on manifolds, see \citet{udriste1994convex} or~\citep[Ch.~11]{boumal2020intromanifolds}.

It has been asked\footnote{A similar question was studied by~\citet[Thm.~5.1]{rapcsak1993coordinatetransformations} for the case where $S$ is a singleton (though it was not clear to us how to interpret their proof) and by~\citet[p.~295]{udriste1994convex} for the case $\calM = \reals$ (dimension one).} whether globally \pl{} functions on $\Rn$ are ``convex in disguise'', in the sense that they are g-convex with respect to some Riemannian metric.
We show that this is indeed the case, as a consequence of Theorem~\ref{thm:plnonlinlstsq}.


\begin{theorem}[\pl{} $\implies$ g-convex in some metric] \label{thm:pl-gconvex}
  Let $f \colon \calM \to \reals$ be smooth and globally \pl{} with respect to some complete Riemannian metric $\inner{\cdot}{\cdot}_1$.
  The set $S$ of minimizers of $f$ is a smooth embedded submanifold of $\calM$ with $\dim S = m$.
  \begin{enumerate}[label=(\alph*)]
  \item If $\calM$ is contractible, then it admits a complete Riemannian metric $\inner{\cdot}{\cdot}_2$ such that $f$ is geodesically convex and globally 1-\pl{} with respect to $\inner{\cdot}{\cdot}_2$. \label{item:pl-gconvex-basic}
  \item If (and only if) $S$ is diffeomorphic to $\reals^m$, the metric $\inner{\cdot}{\cdot}_2$ in part~\ref{item:pl-gconvex-basic} can be chosen such that $\calM$ is isometric to Euclidean space. \label{item:pl-gconvex-S-Rm}
  \end{enumerate}
\end{theorem}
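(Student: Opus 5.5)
The plan is to transport a product metric through the diffeomorphism of Theorem~\ref{thm:plnonlinlstsq}. The first sentence of the statement, that $S$ is a smooth embedded submanifold, is already contained in Theorem~\ref{thm:plnonlinlstsq}. For part~\ref{item:pl-gconvex-basic}, assume $\calM$ is contractible. Theorem~\ref{thm:plnonlinlstsq} gives a diffeomorphism $\psi = (\pi,\varphi) \colon \calM \to S \times \Rk$ with $f = f^* + \|\varphi\|^2$. Endow $S$ with any complete Riemannian metric $g_S$; one exists on any manifold by Nomizu--Ozeki, or concretely by pulling back a proper embedding. On $S \times \Rk$ take the product metric $g_S \oplus \tfrac{1}{2}\,g_{\mathrm{Euc}}$, and define $\inner{\cdot}{\cdot}_2 := \psi^*(g_S \oplus \tfrac12 g_{\mathrm{Euc}})$. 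Then $\psi$ is an isometry. Completeness of $\inner{\cdot}{\cdot}_2$ follows because a product of complete Riemannian manifolds is complete.

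Next we verify the two properties in the model space. There $f\circ\psi^{-1}(s,y) = f^* + \|y\|^2 =: h(s,y)$. A geodesic of a Riemannian product is a pair $(\gamma_S(t), \gamma_E(t))$ of geodesics in each factor. The second component $\gamma_E(t) = a + tb$ is a straight line, so $h$ along the geodesic equals $f^* + \|a+tb\|^2$, which is convex in $t$. Hence $f$ is g-convex for $\inner{\cdot}{\cdot}_2$, since g-convexity is preserved under isometries. For the \pl{} constant, the gradient of $h$ with respect to $g_S \oplus \tfrac12 g_{\mathrm{Euc}}$ is $(0, 2\cdot 2y) = (0,4y)$. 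Its squared norm is $\tfrac12 \cdot 16\|y\|^2 = 8\|y\|^2$. The \pl{} inequality with $\mu=1$ asks for $\|\grad h\|^2 \geq 2(h-f^*) = 2\|y\|^2$, which holds. The scaling $\tfrac12$ is therefore only cosmetic; the Euclidean factor alone already gives $\mu = 2$, hence also $\mu=1$.

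For part~\ref{item:pl-gconvex-S-Rm}, suppose $S$ is diffeomorphic to $\reals^m$. Corollary~\ref{cor:deformintoquadform} gives $\xi \colon \calM \to \Rn$ with $f\circ\xi^{-1}(y) = f^* + y_{m+1}^2+\dots+y_n^2$. Pulling back the Euclidean metric makes $\calM$ isometric to $\Rn$. In that metric $f$ is a convex quadratic along lines, and its gradient norm squared is $4\sum_{i>m} y_i^2 \geq 2(f-f^*)$, so $f$ is $1$-\pl{}. The same computation also shows that part~\ref{item:pl-gconvex-S-Rm} does not need contractibility as a separate hypothesis, since Proposition~\ref{prop:picontinuous} yields it from $S\cong\reals^m$.

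For the converse, suppose some metric making $f$ g-convex and \pl{} also makes $\calM$ isometric to $\Rn$. Then $S$ is the minimizer set of a convex function on Euclidean space, hence a convex set. Being a smooth submanifold, it must be an affine subspace (relatively open and closed within its affine hull), hence diffeomorphic to $\reals^m$. The only point requiring care is this last step: a closed convex set that is a boundaryless embedded submanifold equals its affine hull. I expect this to be the main, though mild, obstacle. To handle it, I would argue that the relative interior of $S$ is open in $S$. The relative boundary is also excluded, because near a relative-boundary point $S$ is locally a half-space-type set and so is not a manifold without boundary of full dimension in its affine hull. It follows that $S$ is open and closed in the affine hull, and connectedness gives equality.
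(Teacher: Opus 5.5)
Your proposal is correct and follows essentially the paper's route: pull back a complete product metric on $S \times \Rk$ through the $\psi$ of Theorem~\ref{thm:plnonlinlstsq} for (a), pull back the Euclidean metric through the $\xi$ of Corollary~\ref{cor:deformintoquadform} for the ``if'' part of (b), and show that a closed convex embedded submanifold of $\Rn$ is an affine subspace for the ``only if'' part. Your arbitrary complete metric on $S$ and the factor $\tfrac12$ are harmless variants of the paper's induced metric and rescaling of $\varphi$. The only loose point is your ``half-space-type'' step in the converse, which tacitly needs $\dim S = \dim \mathrm{aff}\,S$ and the fact that convex bodies are topological manifolds with boundary; the paper's footnote is cleaner: segments in $S$ give $S \subseteq x + \T_x S$, so $S$ is a full-dimensional embedded submanifold of that affine space, hence both open and closed in it, hence equal to it.
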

\begin{proof}[Proof sketch]
  The essence of the argument is to invoke Theorem~\ref{thm:plnonlinlstsq} to obtain a diffeomorphism $\psi \colon \calM \to S \times \Rk$ and to give $S \times \Rk$ a complete metric such that $f \circ \psi^{-1}$ is globally \pl{} and geodesically convex in that metric.
  The function $f$ then inherits those qualities by pulling back the metric to $\calM$ through $\psi$.
  See Section~\ref{sec:gconvexproof} for details.
\end{proof}

\noindent
Note that part~\ref{item:pl-gconvex-basic} is not ``if and only if'': consider Example~\ref{ex:cylinder} which exhibits a g-convex and globally \pl{} function on a cylinder.
Also, part~\ref{item:pl-gconvex-S-Rm} is a variation of Corollary~\ref{cor:deformintoquadform}.

\subsection{Related work} \label{sec:relatedwork}

The literature related to the \pl{} condition and to the type of results we obtain is vast and has deep roots.
We organize some of it in various categories below, without repeating the pointers given above, or anticipating in-context references given throughout the paper.

\paragraph{Optimization algorithms.}

The original appeal of globally \pl{} functions in optimization is that they allow for linear convergence of gradient descent to the set of minimizers.
This was first shown by \citet{polyak1963gradientmethods}.
Moreover, it appears that the \pl{} condition is essentially necessary to guarantee such rates of convergence with constant-step-size gradient descent in the nonconvex case~\citep{abbaszadehpeivasti2022conditions}, although adaptive step sizes can help~\citep{davis2024gradientdescentadaptivestepsize}. 
Beyond gradient descent, many algorithms enjoy rapid convergence rates under \pl{}, including cubic regularization~\citep{nesterov2006cubic}, coordinate descent and stochastic gradient methods~\citep{karimi2016linearPL}, and trust-region methods with truncated conjugate gradients~\citep{rebjock2023nonisolatedTRtCG}.
The \pl{} assumption is strictly weaker than strong convexity, and it was shown that there exists a complexity separation between those two classes~\citep{yue23lowerboundPL}.

\paragraph{\pl{} in applications.}

Optimization problems whose cost functions satisfy the \pl{} condition globally (or in large regions) have been observed in various applications.
In a retrospective article about some of Polyak's work, \citet[\S3]{ablaev2024polyak} list three sorts of examples:
\begin{itemize}
  \item The usual nonlinear least squares, as in Section~\ref{sec:particularcaselstsq}.
  \item Cases where $f(x) = g(Ax + b)$ with a strongly convex function $g$~\citep{karimi2016linearPL}, which they extend to strictly convex $g$ to encompass logistic regression (at the cost of having \pl{} in a wide region rather than being truly global).
  \item Optimal control problems~\citep{fatkhullin2021linearfeedback}.
\end{itemize}
Also in control, \citet[\S3]{fazelLQR2018} show that the optimization problem underlying the model-free linear quadratic regulator (LQR) satisfies a global \pl{} condition wherever the objective is finite, which in turn yields efficient sample and computational guarantees for learning the regulator.
See also remarks by \citet{deoliveira2025remarksPL} about the possibility of having \pl{} in continuous-time versus discrete-time LQR.

Another example is the computation of Bures--Wasserstein barycenters: although the objective is not geodesically convex, \citet{chewi20a} show that it satisfies a global \pl{} condition, which they exploit to secure fast optimization.

Many more examples can be found where the cost function $f$ satisfies the \pl{} condition \emph{locally} around $S$.
In machine learning, this appears (with various tweaks) in works about the loss landscapes of overparameterized neural networks~\citep{belkinPLstar}, how data is processed by deep transformers~\citep{chen2025quantitativeclusteringmeanfieldtransformer}, the analysis of gradient descent for deep networks~\citep{chatterjee2022convergencegradientdescentdeep}, and more~\citep{aimingBelkin,lucchiPL2024}. 
Beyond neural networks, local \pl{} arises in problems that are reparameterizations of simpler ones, such as the low-rank desingularization~\citep{rebjock2024desingularization}.
Variations of local \pl{} have also found applications in queueing theory~\citep{chen2025hiddencvxqueue} (with box constraints), as well as sampling, due to its connection with the log-Sobolev inequality~\citep{chewi2024ballisticlimitlogsobolevconstant} and the Poincar\'e inequality~\citep{gong2025poincareinequalitylocallogpolyakl}.

\paragraph{Similar structural results on $f$.}

Classically, the Morse lemma shows that a smooth function is \emph{locally} equivalent (up to a change of variable) to a quadratic form near nondegenerate critical points.
The Morse--Bott lemma extends this to critical manifolds, similarly to the Morse lemma with parameters which yields smoothly varying quadratic forms.

Theorem~\ref{thm:plnonlinlstsq} provides a diffeomorphism $\psi$ such that $(f \circ \psi^{-1})(x, v) = f^* + \|v\|^2$ is quadratic.
This is akin to a \emph{global} Morse--Bott lemma, afforded by our global assumptions.

Theorem~\ref{thm:globalsquaredistsinglecppl} follows from Theorem~\ref{thm:globalsquaredistsinglecp}, which requires the Hessian at $x^*$ to be positive definite.
This can be relaxed: see for example a result by \citet[Prop.~1]{grune1999stable}, updated by \citet[Prop.~2]{kvalheim2025stablewohyperbolic} to reflect the resolution of the Poincar\'e conjecture.
In contrast to Theorem~\ref{thm:globalsquaredistsinglecp}, these (more general) results only provide a $C^1$ homeomorphism that restricts to a diffeomorphism upon removing $x^*$.
Their proofs rely on advanced topological results that limit applicability in dimension 5.
These differences allow us to emphasize the importance of the initial step in our proof of Theorem~\ref{thm:globalsquaredistsinglecp}, where we first locally straighten the landscape via the Morse Lemma, as depicted in Figure~\ref{fig:flowlines}.

In a similar spirit but for non-isolated critical points, \citet[Thm.~11, Cor.~20, Rem.~7]{kvalheim2025difftopology} proves a related result for a more general setup in dynamical systems.
One could try to apply it to the end-point map $\pi$ once it is known to be a trivial smooth fiber bundle (as we show by the end of Section~\ref{sec:endpointmap}).
As stated, Kvalheim's result assumes $S$ is compact, which in our case forces $S$ to be a singleton (Corollary~\ref{cor:compactSsingleton}).
However, that assumption could conceivably be lifted with localized changes.
If so, Kvalheim's general techniques would provide a map $\psi$ akin to the one we build in Theorem~\ref{thm:plnonlinlstsq}, with one important caveat: $\psi$ would be a diffeomorphism away from $S$, but only a homeomorphism when including $S$.
We explored many proof techniques for Theorem~\ref{thm:plnonlinlstsq}, and the one we present in this paper is the only one we could find that yields a truly global diffeomorphism.
Overall, our proof techniques are rather different, relying on a transparent construction of the trivial fiber bundle structure in Section~\ref{sec:fiberbundle}.

For the construction of the fiber bundle structure of $\pi$ itself, we were also hoping to rely more heavily on existing literature.
However, we could not find a result that applies to our setting.
For example, the work of \citet{Eldering2018NAIM} comes close, but the main results in that paper (and many others) imposes compactness assumptions, with the associated issues as outlined in the previous paragraph.

Also recently, \citet[Thm.~3.9]{marteau2024second} study smooth nonnegative functions (not necessarily \pl{}) whose sets of minimizers are smooth manifolds.
Under a type of Morse--Bott condition (akin to local \pl{}), they prove that such functions admit a global decomposition as a sum of squares of \emph{countably many} smooth functions.
(They also provide additional context and motivation for decomposing functions in sums of nonlinear squares.)
In contrast, Theorem~\ref{thm:plnonlinlstsq} shows that if $f$ is globally \pl{} (a stronger assumption), it can be decomposed globally as a sum at most $n$ squares, with additional structure that enables many of the corollaries in Section~\ref{sec:implications}.

To go beyond quadratic growth (see Lemma~\ref{lemma:bound-trajectories-QG}), \citet{davis2024gradientdescentadaptivestepsize} show that any smooth function satisfying a fourth-order growth condition around its minimizers admits a \emph{local} ``ravine'' decomposition: the function splits into tangent and normal components, with quadratic growth in normal directions and slower growth along the tangent directions. 
Their proof relies on the Morse lemma with parameters.

\citet{garrigos2023squaredistancefunctionspolyaklojasiewicz} shows that squared distance functions to arbitrary closed sets are globally \pl{}, and conversely that any \pl{} function admits a \emph{lower bound} by such a squared distance to its minimizer set.
Such functions are not necessarily smooth (for example, the squared distance to the interval $[-1, 1]$ on the real line is $f(x) = \max(0, |x|-1)^2$, which is \pl{} and $C^1$ but not $C^2$).
Accordingly, the paper caters to a nonsmooth variant of the \pl{} condition, replacing the gradient with a limiting subdifferential.

Let us also note that, for $C^2$ functions, the \emph{local} \pl{} property is equivalent to other \emph{local} properties such as quadratic growth, Morse--Bott, error bound, and the restricted secant inequality~\citep{rebjock2023nonisolated}.

\paragraph{Similar structural results on $\calM$.}

As shown in Theorem~\ref{thm:globalsquaredistsinglecppl} and Corollary~\ref{cor:deformintoquadform}, the mere existence of an appropriate globally \pl{} function on $\calM$ can be used to infer that $\calM$ is diffeomorphic to $\Rn$.
We already noted after Theorem~\ref{thm:globalsquaredistsinglecppl} how this relates to a result of \citet{brown1961monotoneunion}.

Similarly, \citet[Prop.~5.10]{sakai1996riemannian} shows (crediting \citet{greene1976}) that the existence of a smooth function $f \colon \calM \to \reals$ that is strongly g-convex (and in particular, coercive) implies that $\calM$ is diffeomorphic to $\Rn$.
If the Hessian of $f$ is (almost) identity, then $\calM$ is (almost) isometric to $\Rn$ (see \citep{kasue1981strictconvex} and the 1979 PhD thesis of H.W.\ Wissner).
If $f$ is merely g-convex and its set of minimizers $S$ has no boundary, then $\calM$ is diffeomorphic to the total space of the normal bundle of $S$, akin to our conclusion in Theorem~\ref{thm:plnonlinlstsq}---see \citep[p.~438]{shiohama1984topocompletemanifold}.

On a different note, Theorem~\ref{thm:globalsquaredistsinglecp} implies that if $f \colon \calM \to \reals$ is a coercive Morse function with a \emph{single} critical point then $\calM$ is diffeomorphic to $\Rn$.
This is similar in spirit to (albeit simpler than) Reeb's sphere theorem which states that if $f \colon \calM \to \reals$ is a Morse function with exactly \emph{two} critical points and $\calM$ is compact then $\calM$ is homeomorphic to a sphere~\citep[Thm.~4.1]{milnor1963morse}.

\paragraph{Similar structural results on $S$.}

Gradient inequalities have long been used to infer topological properties of level sets. 
In particular, \citet{lojasiewicz1963propriete} introduced his inequality to study zero sets 
of real-analytic functions, showing that gradient flow induces deformation retractions onto these sets; see also \citep[Prop.~3]{kurdyka1998gradientsomin}. 

In a related direction, \citet{cibotaru2025mappingcylinder} investigate the topological structure of the zero set of a function satisfying a Kurdyka--\L{}ojasiewicz inequality.
Working under weaker regularity assumptions than those adopted in the present paper, they show that the zero set admits a \emph{regular mapping-cylinder neighborhood} that is invariant under negative gradient flow. 
This strengthens earlier results of~\citet{kurdyka1998gradientsomin}, who established that the zero set is a strong deformation retract of a suitable neighborhood. 
As an application, they derive restrictions on the types of embedded subsets that can arise as zero sets of K\L{} functions, ruling out certain wild embeddings.

\paragraph{Function classes related to \pl{}.}

A function $f$ is \emph{invex} if its stationary points are its global minimizers.
Convex functions and globally \pl{} functions are invex.
Many more subclasses of invex functions are studied and compared to each other by \citet{goujaud2022conditionnumbers} and \citet{sidford2018quasar}.

The \pl{} condition is a particular case of the more general \emph{{\L}ojasiewicz inequality}.
\citet{lojasiewicz1963propriete,lojasiewicz1965ensembles,lojasiewicz1982trajectoires} proved that every real-analytic function satisfies that property locally.
This was later generalized to the \emph{Kurdyka--{\L}ojasiewicz (K{\L}) property}, involving a desingularizing function $\sigma$, and leading to inequalities of the form $\sigma\big(f(x) - f^*\big) \;\leq\; c \|\nabla f(x)\|.$
This framework, introduced by \citet{kurdyka1998gradientsomin} and further developed by \citet{attouch2010proximal}, \citet{bolte2010characterizations}, \citet{lewis2024identifiability} and \citet{li2018calculusKL} among others, allows to go well beyond smooth $f$.

Another structural assumption that has recently received attention is
\emph{hidden convexity}, whereby a nonconvex objective becomes convex
after a nonlinear \emph{invertible} change of variables.
This setting has been explored in stochastic and constrained optimization by~\citet{fatkhullin2024hiddenconvexstoch,fatkhullin2025hiddenconvexconstrained},
who show that such structure enables convex-like global convergence guarantees
for first-order methods even when the convex reformulation is not explicitly
available.  
More generally, nonlinear reparameterizations---possibly noninvertible---that transform nonconvex problems into convex ones are studied by \citet{levin2022lifts} and the references therein.

\section{Basic facts about P{\L} functions and topological notions} \label{sec:basicfacts}

Let us open these preliminaries with the classical connection between \pl{} and \emph{quadratic growth}.

\begin{lemma}[Bounded trajectories and quadratic growth] \label{lemma:bound-trajectories-QG}
  Let $f \colon \calM \to \reals$ be continuously differentiable and globally $\mu$-\pl{}.
  Let $x(t)$ be the negative gradient flow trajectory of $f$ (that is, $x'(t) = -\grad f(x(t))$) initialized at $x(0) = x_0 \in \calM$.
  Then, $x(t)$ is well defined for all $t \geq 0$ and the trajectory has bounded length for $t \in [0, \infty]$.\footnote{Trajectories may not be defined for all $t < 0$, as shown by $f(x) = \log\!\big(e^{x^2} + e^{x^4}\big)$, which is \pl{}.}
  In particular, the trajectory converges to some $x_\infty := \lim_{t\to\infty} x(t) \in \calM$, and
  \begin{align*}
    \dist\!\big( x_0, x_\infty \big) \leq \sqrt{\frac{2}{\mu}\Big(f(x_0) - f^*\Big)},
  \end{align*}
  where $f^* = \inf f$ and $\dist$ is the Riemannian distance.
  The limit point $x_\infty$ is a critical point of $f$.
  Therefore, the set of critical points $S$~\eqref{eq:S} is closed and non-empty, and
  \begin{align}
      f(x) - f^* \geq \frac{\mu}{2} \dist(x, S)^2
      \tag{QG}
      \label{eq:QG}
  \end{align}
  for all $x \in \calM$, where $\dist(x, S) := \inf_{y \in S} \dist(x, y)$.
\end{lemma}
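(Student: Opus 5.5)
The plan is the classical Łojasiewicz length argument, using $\sqrt{f - f^*}$ as a Lyapunov function. Let $x(t)$ be the maximal solution of $x'(t) = -\grad f(x(t))$ on $[0, T)$, and write $g(t) = f(x(t)) - f^* \geq 0$. Then
\begin{align*}
  g'(t) = -\|\grad f(x(t))\|^2 \leq 0.
\end{align*}
On any interval where $g > 0$, \eqref{eq:PL} gives
\begin{align*}
  \ddt \sqrt{g(t)} = -\frac{\|\grad f(x(t))\|^2}{2\sqrt{g(t)}} \leq -\sqrt{\frac{\mu}{2}}\, \|\grad f(x(t))\| = -\sqrt{\frac{\mu}{2}}\, \|x'(t)\|.
\end{align*}
The inequality uses $\|\grad f\| \geq \sqrt{2\mu g}$, so $\|\grad f\|^2 / (2\sqrt{g}) \geq \|\grad f\|\sqrt{\mu/2}$. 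If instead $g(t_0) = 0$ at some time, then $x(t_0)$ is a global minimizer, hence a critical point, and the trajectory is constant from then on. Integrating, the length of $x|_{[0,t]}$ is at most $\sqrt{2/\mu}\,\big(\sqrt{g(0)} - \sqrt{g(t)}\big) \leq \sqrt{(2/\mu)(f(x_0) - f^*)}$ for every $t < T$.

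Next I show $T = \infty$. The trajectory stays in the closed metric ball of radius $L := \sqrt{(2/\mu)(f(x_0)-f^*)}$ around $x_0$. Since $\calM$ is complete, Hopf--Rinow makes this ball compact. A maximal integral curve of a smooth (or just continuous, given $C^1$ $f$ with uniqueness issues handled via the $C^\infty$ assumption, or via Peano existence plus the escape lemma) vector field that stays in a compact set is defined for all time (escape lemma). So $T = \infty$.

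The finite total length makes $x(t)$ Cauchy as $t \to \infty$. By completeness it converges to some $x_\infty$, and $\dist(x_0, x_\infty) \leq L$. To see that $x_\infty$ is critical, note $\int_0^\infty \|\grad f(x(t))\|^2 \dt = g(0) - \lim g < \infty$. Continuity of $\grad f$ then forces $\grad f(x_\infty) = 0$: otherwise $\|\grad f(x(t))\|$ would be bounded below for all large $t$. In fact \eqref{eq:PL} gives more directly $g(t) \leq e^{-2\mu t} g(0) \to 0$, so $f(x_\infty) = f^*$ and $x_\infty \in S$. This also shows $S$ is non-empty. $S$ is closed as the zero set of the continuous map $\grad f$, and it equals the set of minimizers by \eqref{eq:PL}.

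Finally, \eqref{eq:QG} follows by applying the distance bound from an arbitrary $x_0 = x$: $\dist(x, S) \leq \dist(x, x_\infty) \leq \sqrt{(2/\mu)(f(x)-f^*)}$, and squaring gives the claim. The only delicate step is the global existence of the flow. It relies on completeness through compactness of closed balls, together with the a priori length bound; that bound must be derived on the maximal interval before knowing $T = \infty$, which is why the argument is ordered this way.
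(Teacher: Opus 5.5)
Your proposal is correct and is essentially the classical argument the paper defers to (Otto--Villani's Prop.~1', paralleling {\L}ojasiewicz's length bound). The inequality $\ddt \sqrt{f(x(t)) - f^*} \leq -\sqrt{\mu/2}\,\|x'(t)\|$ bounds the length, completeness (via Hopf--Rinow and the escape lemma) gives global existence and convergence, and \eqref{eq:QG} follows from $\dist(x, S) \leq \dist(x, x_\infty)$.

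One small tidy-up: once $f(x(t_0)) = f^*$, the trajectory stays constant because $f(x(t))$ is nonincreasing and bounded below by $f^*$, so $\|\grad f(x(t))\|^2 = -\ddt f(x(t)) = 0$ afterwards. This avoids relying on uniqueness of solutions when $f$ is only $C^1$.
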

\begin{proof}
  See the classical argument in~\citep[Prop.~1']{otto2000generalization}, and broader historical notes in~\citep[Lem.~A.1]{rebjock2023nonisolated}.
  The proof that negative gradient flow trajectories on $f$ have bounded length parallels the argument used earlier by \citet[Thm.~1]{lojasiewicz1982trajectoires} for analytic functions.
  The set $S = f^{-1}(f^*)$ is closed, and it is non-empty because it contains the limit points of all trajectories.
\end{proof}

The next lemma underlines the relation between \pl{} and the \emph{Morse--Bott property}.
The most important aspect of it for our purposes is that $f$ being \pl{} and smooth implies that $S$ is (locally) smooth.
This was known for analytic functions ($C^\omega$) and recently extended to $C^2$ functions (it does not hold if $f$ is merely $C^1$).

\begin{lemma}[Morse--Bott property]\label{lem:MB}
    Let $f \colon \calM \to \reals$ be globally $\mu$-\pl{} and let $S$ be its set of critical points.
    If $f$ is \{$C^{p+1}$ with $p \geq 1$, $C^\infty$ or $C^\omega$\}, then
    \begin{enumerate}
        \item Each connected component of $S$ is a \{$C^{p}$, $C^\infty$ or $C^\omega$\} embedded submanifold of $\calM$.
        \item For each $x$ in $S$, let $\T_x S$ and $\N_x S$ denote the tangent and normal spaces at $x$ to the corresponding connected component of $S$.
        The Hessian of $f$ at $x$ satisfies:
        \begin{align}
            \ker \Hess f(x) = \T_x S && \textrm{ and } && \Hess f(x)|_{\N_x S} \succeq \mu \Id,
            \tag{MB}
            \label{eq:MB}
        \end{align}
        where $\Id$ denotes the identity operator (here on the normal space $\N_x S$).
    \end{enumerate}
\end{lemma}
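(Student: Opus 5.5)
The plan is to deduce both items from the known equivalence, for $C^2$ functions, between \emph{local} \pl{}, local quadratic growth, and the Morse--Bott property, as established by \citet{rebjock2023nonisolated}. Since $f$ is globally $\mu$-\pl{}, it is in particular $\mu$-\pl{} in a neighborhood of every $\bar x \in S$. The point is to extract regularity of $S$ near each $\bar x$ and the Hessian bounds, and to keep the constant $\mu$ in the normal direction.

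\textbf{Step 1: Hessian bound on the normal cone.}
Fix $\bar x \in S$, a unit tangent vector $v \in \T_{\bar x}\calM$, and the curve $c(t) = \Exp_{\bar x}(tv)$. Since $f$ is $C^2$ and $\grad f(\bar x)=0$, we have
\[
\grad f(c(t)) \approx t\,\Hess f(\bar x)[v]
\quad\text{and}\quad
f(c(t)) - f^* \approx \tfrac{t^2}{2}\inner{v}{\Hess f(\bar x)[v]},
\]
with $o(t)$ and $o(t^2)$ errors respectively. Plugging into \eqref{eq:PL} and letting $t\to0$ gives, with $H = \Hess f(\bar x) \succeq 0$ (positive semidefinite because $\bar x$ is a minimizer),
\[
\mu\inner{v}{Hv} \leq \|Hv\|^2 \qquad \text{for all } v .
\]
Diagonalizing $H$, the inequality forces every nonzero eigenvalue of $H$ to be at least $\mu$. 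Hence $H$ restricted to $(\ker H)^\perp$ satisfies $H \succeq \mu\Id$.

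\textbf{Step 2: $S$ is locally a submanifold with $\T_{\bar x}S=\ker H$.}
This is the main obstacle. Let $d = \dim\ker H$. The inclusion $S \subseteq \{\text{critical points}\}$ gives an upper bound on the size of $S$: the map $x \mapsto P\,\grad f(x)$, with $P$ the projection onto $(\ker H)^\perp$ read in a chart, has surjective differential at $\bar x$. Hence its zero set $Z$ is, near $\bar x$, a $C^{p}$ submanifold of dimension $d$, and $S \cap U \subseteq Z$.

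For the reverse inclusion I would use quadratic growth together with the gradient flow of Lemma~\ref{lemma:bound-trajectories-QG}. Restricted to $Z$, the gradient of $f$ is essentially its $\ker H$ component, so $f|_Z$ has vanishing Hessian at $\bar x$. Suppose some $z \in Z$ near $\bar x$ had $f(z) > f^*$. Apply \eqref{eq:PL} at $z$: the gradient at $z$ is tangent-ish to $Z$ to first order. The error bound $\|\grad f(x)\| \ge \sqrt{2\mu (f(x)-f^*)} \ge \mu \dist(x,S)$ from \eqref{eq:QG} then contradicts the fact that $f$ grows at most like $o(\dist^2)$ along $\ker H$ directions.

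A cleaner route is to apply the cited result directly: \citet[Thm.~2.16]{rebjock2023nonisolated} shows that local \pl{} for $C^{p+1}$ $f$ implies $S$ is locally a $C^p$ submanifold. Given that, $\T_{\bar x}S \subseteq \ker H$ is immediate, since $\grad f \equiv 0$ on $S$. Equality follows from quadratic growth: if $v \in \ker H \setminus \T_{\bar x}S$, then $\dist(c(t),S) \gtrsim t$ while $f(c(t))-f^* = o(t^2)$, contradicting \eqref{eq:QG}. Hence $\N_{\bar x}S = (\ker H)^\perp$ and Step~1 gives \eqref{eq:MB}.

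\textbf{Step 3: globalize and handle other regularity classes.}
Since these statements are local, each connected component of $S$ is covered by such charts. Closedness of $S$ (Lemma~\ref{lemma:bound-trajectories-QG}) together with local constancy of $d$ on components gives embeddedness. The $C^\infty$ case follows by taking all $p$. The $C^\omega$ case follows by using the analytic implicit function theorem on $Z$, since the reverse inclusion $Z \subseteq S$ holds locally.
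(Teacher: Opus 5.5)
Your route for the $C^p$ and $C^\infty$ cases matches the paper's. The paper's proof is just a pointer to \citet[Thm.~2.16, Cor.~2.17]{rebjock2023nonisolated}, and you invoke the same Thm.~2.16 for the submanifold structure. Your Step~1 (spectral gap from a second-order expansion of \eqref{eq:PL}) is correct, as is your quadratic-growth argument that $\ker H \subseteq \T_{\bar x}S$. Together they give a self-contained derivation of \eqref{eq:MB} once the manifold structure is known.

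The real difference is the analytic case. The paper cites \citet{feehan2020morse}; you instead bootstrap from the $C^2$ result via the analytic implicit function theorem. That works and avoids a second deep reference, but two points must be fixed.

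\begin{itemize}
\item The inclusion $Z \subseteq S$ near $\bar x$ cannot rest on your first sketch in Step~2. Expanding $f$ around $\bar x$ along $Z$ and combining with \eqref{eq:QG} only gives $\dist(z,S) = o(\dist(z,\bar x))$, which is no contradiction. To rescue that sketch you would have to expand at a nearest point $s \in S$, using that $\T_s Z = \ker \Hess f(s)$ for all $s \in S$ near $\bar x$. That in turn needs constant Hessian rank on $S$, which follows from your spectral gap, plus uniform near-tangency of chords of $Z$.
\item The clean justification comes straight from the cited result. Near $\bar x$, $S$ is a $C^1$ embedded submanifold of dimension $\dim \T_{\bar x}S = \dim\ker H = \dim Z$, and it is contained in $Z$. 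Hence it is open in $Z$, either by invariance of domain or because an injective immersion between equidimensional manifolds is open. It is also closed, so $S \cap U = Z \cap U$ whenever $Z \cap U$ is connected.
\item The metric on $\calM$ need not be analytic. So define $Z$ through the coordinate partial derivatives of $f$ in an analytic chart rather than through $\grad f$; only then does the analytic implicit function theorem apply.
\end{itemize}
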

\begin{proof}
    See~\citep[Thm.~2.16, Cor.~2.17]{rebjock2023nonisolated} for regularity $C^p$ and $C^\infty$, and~\citep{feehan2020morse} for the analytic case.
    A local version of \pl{} is sufficient.
\end{proof}

As stated, Lemma~\ref{lem:MB} does not imply that $S$ itself is a manifold in the usual sense since, in principle, it might have several connected components of different dimensions.
We will rule this out shortly, by showing in Proposition~\ref{prop:picontinuous} that $S$ is connected because $\calM$ is so.

The latter proposition shows something more, namely, that $\calM$ strongly deformation retracts to $S$.
This is why $S$ and $\calM$ share many topological properties.
In particular, Theorem~\ref{thm:plnonlinlstsq} assumes $\calM$ is contractible, and we shall see that this is the case if and only if $S$ is contractible.
Let us recall the definitions~\citep[pp.~200--202]{lee2011topological}. 

\begin{definition} \label{def:deformationretract}
    Let $X$ be a topological space.
    A continuous map $H \colon X \times [0, 1] \to X$ is a \emph{deformation retraction} of $X$ to a topological subspace $A \subseteq X$ if, for all $x \in X$ and $a \in A$,
    \begin{align*}
      H(x, 0) = x, && H(x, 1) \in A, && \textrm{ and } && H(a, 1) = a.
    \end{align*}
    We then say \emph{$X$ deformation retracts to $A$}.
    If also $H(a, t) = a$ for all $a \in A$ and $t \in [0, 1]$, then $H$ is a \emph{strong deformation retraction}.
\end{definition}

\begin{definition} \label{def:contractible}
    A topological space $X$ is \emph{contractible} if it deformation retracts onto a point.
\end{definition}

Parts of our conclusions are that the end-point map of negative gradient flow $\pi \colon \calM \to S$ (see~\eqref{eq:pi} below) is a smooth fiber bundle---a trivial one if $\calM$ is contractible.
The definition follows~\citep[p.~268]{lee2012smoothmanifolds}.

\begin{definition} \label{def:smoothfiberbundle}
    Let $E, B$ and $F$ be smooth manifolds, and let $\pi \colon E \to B$ be surjective and smooth. 
    Then $\pi$ is a \emph{smooth fiber bundle over the base $B$ with model fiber $F$} if, for all $b \in B$, there exist a neighborhood $U$ of $b$ in $B$ and a diffeomorphism $h \colon \pi^{-1}(U) \to U \times F$ such that, for all $x$, the first component of $h(x)$ is $\pi(x)$.

    Such a map $h$ is called a \emph{local trivialization}.
    If it can be made global, that is, if there exists a diffeomorphism $h \colon E \to B \times F$ such that for all $x$ the first component of $h(x)$ is $\pi(x)$, then the fiber bundle is said to be (globally) \emph{trivial}.
\end{definition}

Note that the definition implies $\pi$ is a submersion, that is, $\D\pi(x)$ is surjective for all $x$. 

\section{The special case of a single minimizer} \label{sec:singleminimizer}

\begin{figure}[t]
    \centering
    \includegraphics[width=1.0\textwidth]{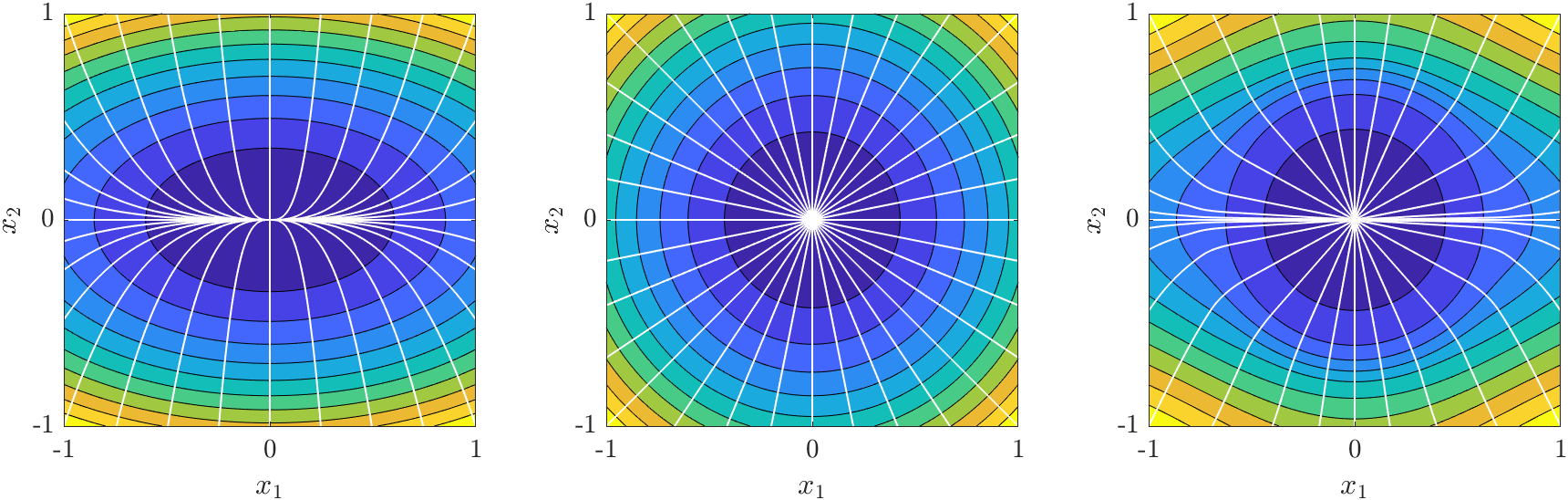}
    \caption{
        Background colors indicate level sets of some function $f \colon \reals^2 \to \reals$ with minimizer at the origin $x^* = 0$.
        White curves are gradient flow trajectories.
        \emph{Left:} $f$ is quadratic; the eigenvalues $\lambda_1, \lambda_2$ of the Hessian of $f$ at $x^*$ are positive but distinct. We see that most trajectories approach $x^*$ from the same direction, asymptotically.
        \emph{Middle:} if instead the two eigenvalues are equal, then the trajectories approach $x^*$ from all directions.
        \emph{Right:} to prove Theorem~\ref{thm:globalsquaredistsinglecp}, we first deform space so that the eigenvalues of the Hessian of $f$ (not necessarily quadratic) at $x^*$ become equal. This helps match trajectories to directions in $\Rn$ in a smooth way.
    }
    \label{fig:flowlines}
\end{figure}

This section holds a proof of Theorem~\ref{thm:globalsquaredistsinglecppl}, that is, the case where $f$ has a single minimizer.
In fact, we shall prove a somewhat more general result as stated in Theorem~\ref{thm:globalsquaredistsinglecp} below.
It relaxes the global \pl{} assumption to a local version of it together with \emph{coercivity}.

The proof relies on two lemmas established later in the section.
Recall the goal is to build a diffeomorphism $\varphi \colon \calM \to \Rn$ such that $f(x) = f(x^*) + \|\varphi(x)\|^2$ for all $x \in \calM$.
\begin{enumerate}
    \item Lemma~\ref{lem:localsquaredistsinglecp} is a globalized Morse lemma.
    We use it to transform $\calM$ globally (via a diffeomorphism) in such a way that, locally around the critical point $x^*$, the function becomes equal to the squared distance to $x^*$.
    
    This sets the stage for the next step, as it ensures that negative gradient flow trajectories can converge to $x^*$ arriving from all directions (asymptotically), whereas before the transformation the trajectories might collapse according to the extreme eigendirections of the Hessian at $x^*$.
    See Figure~\ref{fig:flowlines}.

    \item Lemma~\ref{lem:globalquadraticsinglecp} uses gradient flow on $f$ to map each point of $\calM$ to a point of $\Rn$, diffeomorphically.
    To do so, we look at the direction of arrival of the gradient flow trajectory as it converges to $x^*$.
    This provides a direction in $\Rn$, which is then scaled to ``rectify'' the function value into a pure quadratic.
    The proof relies on a helper Lemma~\ref{lem:GFrescaledoncoerciveinvexM} about a normalized gradient flow that maps level sets to level sets.
\end{enumerate}

\begin{definition} \label{def:coercive}
    A function $f \colon \calM \to \reals$
    is \emph{coercive}\footnote{Coercive functions are also called \emph{exhaustion} functions~\citep[p.~46]{lee2012smoothmanifolds}.
A coercive function is \emph{proper} (that is, pre-images of compact sets are compact).
The other way around, a proper function that is also lower-bounded is coercive. 
} if its sublevel sets are compact, that is, if for all $c \in \reals$ the set $\{ x \in \calM : f(x) \leq c \}$ is compact.
\end{definition}

\begin{lemma} \label{lem:coerciveuniquecpglobalmin}
    If $f \colon \calM \to \reals$ is smooth and coercive and it has a unique critical point $x^*$, then $x^*$ is the unique global minimizer of $f$.
\end{lemma}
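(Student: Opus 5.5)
Since $f$ is coercive, its sublevel sets are compact. I will first show that $f$ attains its infimum, and then that the point where it does so can only be $x^*$.

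For the first step, fix any point $x_0 \in \calM$ and set $c = f(x_0)$. The sublevel set $K = \{x \in \calM : f(x) \leq c\}$ is compact by Definition~\ref{def:coercive}, and it is non-empty because it contains $x_0$. The function $f$ is continuous, so it attains its minimum over $K$ at some point $\bar x \in K$. This point is also a global minimizer on all of $\calM$: any $x \notin K$ satisfies $f(x) > c \geq f(\bar x)$. In particular $f$ is bounded below and $\inf f = f(\bar x)$.

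For the second step, note that $\bar x$ is a global minimizer of a smooth function on a manifold without boundary, so it is a local minimizer at an interior point. First-order optimality then gives $\grad f(\bar x) = 0$: in a chart around $\bar x$ the usual one-variable argument applies along each coordinate direction. By assumption $x^*$ is the only critical point, so $\bar x = x^*$. The same argument shows that every global minimizer is a critical point and therefore equals $x^*$. Hence $x^*$ is the unique global minimizer, and $f(x) > f(x^*)$ for all $x \neq x^*$.

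I do not expect a real obstacle here. The one point that needs care is that coercivity is exactly what guarantees the infimum is attained; without it, a function with a unique critical point could still fail to have a global minimizer, for instance if it decreases towards infinity along some end of $\calM$. Once the minimum is attained, the rest follows from standard first-order optimality.
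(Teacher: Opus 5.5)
Your proof is correct and follows essentially the same route as the paper's: coercivity gives a compact nonempty sublevel set, so $f$ attains a global minimum, and every global minimizer is a critical point, so it must be $x^*$. The only cosmetic difference is that the paper takes the sublevel set at level $f(x^*)$, whereas you use an arbitrary $f(x_0)$.
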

\begin{proof}
    The sublevel set $\{ x \in \calM : f(x) \leq f(x^*) \}$ is compact, hence $f$ has a global minimizer.
    Moreover, any global minimizer of $f$ must be a critical point.
\end{proof}

\begin{theorem} \label{thm:globalsquaredistsinglecp}
    Let $f \colon \calM \to \reals$ be smooth and coercive.
    Assume $f$ has a unique critical point $x^*$ and that the Hessian of $f$ at $x^*$ is positive definite.
    Then, there exists a diffeomorphism $\varphi \colon \calM \to \Rn$ such that $f(x) = f(x^*) + \|\varphi(x)\|^2$ for all $x \in \calM$.
\end{theorem}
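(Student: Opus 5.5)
We follow the two-step outline announced at the start of this section. Throughout, write $g = f - f(x^*)$, so that $g \geq 0$ with $g^{-1}(0) = \{x^*\}$ by Lemma~\ref{lem:coerciveuniquecpglobalmin}.

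\textbf{Step 1: straighten $f$ near $x^*$.}
By the Morse lemma, and since $\Hess f(x^*) \succ 0$, there is a chart $\theta$ around $x^*$ with $\theta(x^*) = 0$ and $g = \|\theta\|^2$ on a neighborhood $U$. This is the content of Lemma~\ref{lem:localsquaredistsinglecp}. We do not try to change the metric. Instead we only use the chart. Fix a small $\varepsilon > 0$ such that the ball $\{\|\theta\| \leq 2\varepsilon\}$ lies inside $U$. On the compact level set $L_\varepsilon = g^{-1}(\varepsilon^2)$, the map $x \mapsto \theta(x)/\varepsilon$ is a diffeomorphism onto the sphere $\mathbb{S}^{n-1}$, because $L_\varepsilon$ is exactly the sphere of radius $\varepsilon$ in this chart.

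\textbf{Step 2: globalize with a normalized flow.}
On $\calM \setminus \{x^*\}$ we have $\grad g \neq 0$. Consider the vector field
\[
V = \frac{\grad g}{\|\grad g\|^2},
\]
which satisfies $\D g[V] = 1$. Its flow $\Phi_s$ maps the level set $g^{-1}(c)$ to the level set $g^{-1}(c+s)$. This is the content of Lemma~\ref{lem:GFrescaledoncoerciveinvexM}.

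The flow exists for all $s \in (-c, \infty)$. Forward completeness uses coercivity: on $g^{-1}([c, c+s])$, a compact set with no critical points, the field $V$ is bounded, so trajectories cannot escape in finite time while $g$ increases linearly. Backward, trajectories stay in the compact sublevel set and remain defined as long as $g > 0$. Consequently
\[
(c, y) \mapsto \Phi_{c - \varepsilon^2}(y)
\]
is a diffeomorphism from $(0, \infty) \times L_\varepsilon$ onto $\calM \setminus \{x^*\}$. Surjectivity holds because the backward trajectory from any $x$ reaches the level $\varepsilon^2$ when $g(x) > \varepsilon^2$, and the forward trajectory reaches it when $g(x) < \varepsilon^2$.

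Now define
\[
\varphi(x) = \sqrt{g(x)}\,\frac{\theta(y(x))}{\varepsilon} \quad \text{for } x \neq x^*, \qquad \varphi(x^*) = 0,
\]
where $y(x) \in L_\varepsilon$ is the point of the flow line through $x$ on the level $\varepsilon^2$. Then $\|\varphi\|^2 = g$ holds automatically. Moreover $\varphi$ is a diffeomorphism from $\calM \setminus \{x^*\}$ onto $\Rn \setminus \{0\}$: in polar coordinates $(r, u)$ on $\Rn \setminus \{0\}$, it reads $(c, y) \mapsto (\sqrt{c}, \theta(y)/\varepsilon)$, and $c \mapsto \sqrt{c}$ is a diffeomorphism of $(0, \infty)$.

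\textbf{Main obstacle: smoothness at $x^*$.}
The delicate point is to show that $\varphi$ is smooth, with invertible differential, at $x^*$. The plan is to make $V$ explicit in the chart instead of relying on asymptotics. Inside the region $\{\|\theta\| < 2\varepsilon\}$ we have $g = \|\theta\|^2$, but the gradient depends on the metric, so the flow lines of $V$ need not be radial in the chart. This would spoil smoothness at $x^*$.

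To fix this, replace $V$ by any smooth field $W$ on $\calM \setminus \{x^*\}$ with $\D g[W] = 1$. In the chart we choose
\[
W = \frac{\theta}{2\|\theta\|^2} \quad \text{near } x^*, \qquad W = V \text{ away from } x^*,
\]
and glue the two with a partition of unity. The constraint $\D g[W] = 1$ is affine in $W$, so convex combinations preserve it. All the flow arguments above use only $\D g[W] = 1$ and boundedness of $W$ on compact sets away from $x^*$, so they go through unchanged with $W$ in place of $V$.

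With this choice, flow lines in $\{\|\theta\| < \varepsilon\}$ are radial rays, and $y(x)$ is the radial projection of $\theta(x)$ onto the sphere of radius $\varepsilon$. Therefore
\[
\varphi = \sqrt{g}\,\frac{\theta}{\|\theta\|} = \theta
\]
on that neighborhood, and $\theta$ is a chart, hence a local diffeomorphism at $x^*$. Together with Step 2, this shows that $\varphi$ is a global diffeomorphism $\calM \to \Rn$ satisfying $f(x) = f(x^*) + \|\varphi(x)\|^2$ for all $x \in \calM$.
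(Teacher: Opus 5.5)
Your proof is correct, but it follows a different route from the paper's.

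\emph{The paper's route.} It first combines the Morse lemma with the Palais--Cerf extension theorem (Lemma~\ref{lem:localsquaredistsinglecp}, including an orientation check). This produces a global diffeomorphism $\psi$ of $\calM$ with $f\circ\psi = f(x^*) + \dist(\cdot,x^*)^2$ near $x^*$. It then uses the genuine rescaled Riemannian gradient flow of $f\circ\psi$, whose trajectories near $x^*$ are automatically geodesic rays. Finally it builds $\varphi$ and its inverse explicitly from $\Log_{x^*}$, $\Exp_{x^*}$ and the flow, checking consistency on overlapping regions (Lemma~\ref{lem:globalquadraticsinglecp}).

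\emph{Your route.} You leave $\calM$ and $f$ untouched and change the vector field instead. Because the constraint $\D g[W]=1$ is affine, you can glue the radial field of the Morse chart to the normalized gradient. Flow lines are then radial near $x^*$, and $\varphi$ coincides with the chart $\theta$ there. The diffeomorphism property away from $x^*$ comes from the product structure $(0,\infty)\times L_\varepsilon \cong \calM\setminus\{x^*\}$ together with polar coordinates. This is the classical gradient-like vector field argument.

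\emph{What each buys.} Your route needs only the local Morse lemma (Lemma~\ref{lem:morselemmaposdef}), not Lemma~\ref{lem:localsquaredistsinglecp} as you cite. It dispenses with Palais--Cerf, the orientation issue, the injectivity radius and the overlap checks. The paper's route keeps the actual gradient flow of (a reparametrization of) $f$, with a geometric $\Log_{x^*}$ interpretation in the spirit of the rest of the paper. The statement itself does not require this.

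\emph{One step to justify.} You should show that the level set $L_\varepsilon = g^{-1}(\varepsilon^2)$ has no component outside the chart.
\begin{itemize}
\item If $\|\theta(x)\|<\varepsilon$, then $g(x)<\varepsilon^2$, so $x \notin L_\varepsilon$.
\item If $x$ lies outside the compact set $\theta^{-1}(\{\|u\|\le\varepsilon\})$, its backward $W$-trajectory converges to $x^*$. It must therefore cross the chart sphere $\{\|\theta\|=\varepsilon\}$ at some time $t<0$, where $g=\varepsilon^2$.
\item Since $g$ increases at unit rate along trajectories, this gives $g(x)=\varepsilon^2-t>\varepsilon^2$.
\end{itemize}
Hence $L_\varepsilon = \theta^{-1}(\varepsilon\,\mathbb{S}^{n-1})$, as you assumed. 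The paper uses the same monotonicity argument in its own proof.
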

\begin{proof}
    From Lemma~\ref{lem:coerciveuniquecpglobalmin}, we know that $x^*$ is the unique minimizer of $f$.
    Apply Lemma~\ref{lem:localsquaredistsinglecp} to $f$ to obtain a diffeomorphism $\psi \colon \calM \to \calM$ with the stated properties.
    Then apply Lemma~\ref{lem:globalquadraticsinglecp} to $f \circ \psi$, which yields a diffeomorphism $\tilde \varphi \colon \calM \to \Rn$ such that $(f \circ \psi)(x) = f(x^*) + \|\tilde \varphi(x)\|^2$ for all $x \in \calM$.
    The composition $\varphi = \tilde \varphi \circ \psi^{-1}$ is the desired diffeomorphism because $f(x) = (f\circ\psi)(\psi^{-1}(x)) = f(x^*) + \|\tilde\varphi(\psi^{-1}(x))\|^2$.
\end{proof}

From here, Theorem~\ref{thm:globalsquaredistsinglecppl} as stated in the introduction is a corollary.

\begin{proof}[Proof of Theorem~\ref{thm:globalsquaredistsinglecppl}]
    Recall $f \colon \calM \to \reals$ is smooth and globally~\eqref{eq:PL} with a unique critical point $x^*$.
    The sublevel sets of $f$ are compact (and hence $f$ is coercive) owing to quadratic growth away from $x^*$ (Lemma~\ref{lemma:bound-trajectories-QG}) and the completeness of $\calM$.
    The Hessian of $f$ at $x^*$ is positive definite by Lemma~\ref{lem:MB}.
    Thus, Theorem~\ref{thm:globalsquaredistsinglecp} applies.
\end{proof}

We are ready to proceed with the technical lemmas.
The first one is essentially the (local) Morse lemma, only globalized so that we get a diffeomorphism from all of $\calM$ to itself.
Recall $\dist$ is the Riemannian distance on $\calM$.

\begin{lemma} \label{lem:localsquaredistsinglecp}
    Let $f \colon \calM \to \reals$ be smooth.
    Assume $x^*$ is a critical point of $f$ and that the Hessian of $f$ at $x^*$ is positive definite.
    Then there exist a positive radius $r > 0$ and a diffeomorphism $\psi \colon \calM \to \calM$ such that
    $\psi(x^*) = x^*$ and 
    \begin{align*}
        (f \circ \psi)(x) = f(x^*) + \dist(x, x^*)^2 && \textrm{ for all } && x \in \calM && \textrm{ such that } && \dist(x, x^*) \leq r.
    \end{align*}
\end{lemma}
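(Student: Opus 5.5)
The plan is to combine the classical Morse lemma in normal coordinates with an isotopy-extension argument, so that the local change of variables becomes a global diffeomorphism. Work in normal coordinates at $x^*$ via $\Exp_{x^*}$, restricted to a ball of radius $r_0 < \inj(x^*)$. In these coordinates $\dist(x,x^*)^2 = \|v\|^2$ with $x = \Exp_{x^*}(v)$. Let $g(v) = f(\Exp_{x^*}(v)) - f(x^*)$. Then $g(0)=0$, $\grad g(0)=0$, and $\Hess g(0) = H \succ 0$.

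The Morse lemma, in its Hadamard form $g(v) = \inner{v}{Q(v)v}$ with $Q$ smooth and $Q(0) = \tfrac12 H$, gives a local diffeomorphism $\chi(v) = Q(v)^{1/2}v$ near $0$ with $g(v) = \|\chi(v)\|^2$. The desired local map is $\psi_{\mathrm{loc}} = \Exp_{x^*}\circ \chi^{-1}\circ \Exp_{x^*}^{-1}$. Indeed, $f(\psi_{\mathrm{loc}}(x)) = f(x^*) + \|\Exp_{x^*}^{-1}(x)\|^2 = f(x^*) + \dist(x,x^*)^2$. This holds on a small ball, and $\psi_{\mathrm{loc}}$ fixes $x^*$.

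The main step is extending $\psi_{\mathrm{loc}}$ to a diffeomorphism of all of $\calM$ that agrees with it near $x^*$. The map $\chi^{-1}$ is a local diffeomorphism of $\Rn$ fixing $0$, with differential $A = (H/2)^{-1/2}$ at $0$, which is symmetric positive definite and hence in $GL^+$. I would first connect $\chi^{-1}$ to $A$ by the Alexander trick: $\chi_s(v) = \chi^{-1}(sv)/s$ for $s\in(0,1]$, with $\chi_0 = A$. Then connect $A$ to $\Id$ by $((1-s)\Id + sA)$, which stays positive definite. Concatenating gives a smooth isotopy of embeddings of a small closed ball $\bar B_\rho$ into $\Rn$ (after smoothing in $s$ at the junction, for instance by reparametrizing time to be flat at the ends), starting at the inclusion and ending at $\chi^{-1}|_{\bar B_\rho}$. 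All these embeddings land inside $B_{r_0}$ if $\rho$ is small enough, because $\|\chi_s(v)\| \le C\|v\|$ uniformly. The isotopy extension theorem (in the form of integrating the time-dependent vector field $\partial_s \chi_s \circ \chi_s^{-1}$, cut off by a bump function supported in $B_{r_0}$) then yields a compactly supported diffeotopy of $B_{r_0}$ whose time-$1$ map equals $\chi^{-1}$ on $\bar B_\rho$ and the identity near $\partial B_{r_0}$. Transport this map to $\calM$ by $\Exp_{x^*}$ and extend it by the identity outside the geodesic ball. The result is a global diffeomorphism $\psi$ of $\calM$ with $\psi(x^*)=x^*$ and $\psi = \psi_{\mathrm{loc}}$ on $\{\dist(x,x^*)\le r\}$, where $r$ is small enough that $\chi$ is defined there. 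This matches the Palais--Cerf ingredient the paper alludes to, and one could cite Palais' disk theorem directly instead of building the isotopy by hand.

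The expected obstacle is technical care rather than any new idea. One must check that the Alexander-trick family is smooth in $(s,v)$ including at $s=0$, which follows by writing $\chi^{-1}(sv)/s = \int_0^1 \D\chi^{-1}(tsv)[v]\,\mathrm{d}t$. One must also check that the orientation condition holds, which it does because $\D\chi^{-1}(0)$ is positive definite, so it lies in the identity component of $GL(n)$. Finally, the radii $r \le \rho$ must be chosen compatibly.
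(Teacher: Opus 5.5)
Your proposal is correct and follows essentially the same route as the paper (Appendix~\ref{app:extension-morse}): your local map $\Exp_{x^*}\circ\chi^{-1}\circ\Exp_{x^*}^{-1}$ is the paper's $\varphi_a^{-1}\circ\varphi_b$ (Morse chart composed with normal coordinates), and your Alexander trick, linear path to the identity and isotopy extension simply unpack the Palais--Cerf disk theorem that the paper cites. Your version also has two small benefits: the orientation condition holds automatically because $\D\chi^{-1}(0) = (H/2)^{-1/2}$ is positive definite (the paper instead flips a coordinate of $\varphi_b$ if needed), and extending from a closed ball $\bar B_\rho$ before shrinking $r$ matches the closed-disk form in which the extension theorem is stated.
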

\begin{proof}
  This is a simple consequence of the Morse lemma (which provides a \emph{local} diffeomorphism that makes $f$ into a squared distance function) and of the Palais--Cerf theorem (which extends that local diffeomorphism into a global one).
  Details are in Appendix~\ref{app:extension-morse}.
\end{proof}

The next lemma is a simple helper, in keeping with standard arguments as seen for example in~\citep[Thm.~3.1]{milnor1963morse}.
We use it to prove the more involved Lemma~\ref{lem:globalquadraticsinglecp}.

\begin{lemma}[Rescaled gradient flow] \label{lem:GFrescaledoncoerciveinvexM}
    Let $f \colon \calM \to \reals$ be smooth and coercive.
    Assume $f$ has a unique critical point $x^*$, and that $f(x^*) = 0$.
    For $x \neq x^*$, let $t \mapsto \nu(x, t)$ denote the solution of the rescaled gradient flow
    \begin{align}
        \ddt \nu(x, t) = \frac{1}{\|\grad f(\nu(x, t))\|_{\nu(x, t)}^2} \grad f(\nu(x, t)) && \textrm{ with } && \nu(x, 0) = x.
        \label{eq:rescaledflow}
    \end{align}
    Then, $\nu(x, t)$ is smoothly defined for all $x \neq x^*$ and $t \in (-f(x), \infty)$.
    Moreover, $f(\nu(x, t)) = f(x) + t$ so that $\nu(x, t) \to x^*$ for $t \to -f(x)$.
\end{lemma}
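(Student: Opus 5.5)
The plan is to treat \eqref{eq:rescaledflow} as the flow of the smooth vector field $V = \grad f / \|\grad f\|^2$ on the open set $\calM \setminus \{x^*\}$. This set is open because $\{x^*\}$ is closed, and $V$ is smooth there because $\grad f$ vanishes only at $x^*$. The fundamental theorem of flows then gives a maximal smooth flow $\nu$ on an open flow domain $\mathcal{D} \subseteq (\calM\setminus\{x^*\}) \times \reals$. For each $x \neq x^*$, the maximal interval of existence is an open interval $(a(x), b(x))$ containing $0$. Smoothness of $\nu$ on $\mathcal{D}$ is automatic, so the task reduces to identifying this interval as $(-f(x), \infty)$.

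\textbf{Step 1: level identity.} Along a trajectory,
\[
\ddt f(\nu(x,t)) = \inner{\grad f(\nu)}{V(\nu)} = 1,
\]
so $f(\nu(x,t)) = f(x) + t$ for all $t \in (a(x), b(x))$. Since $f \geq 0$ with equality only at $x^*$ (Lemma~\ref{lem:coerciveuniquecpglobalmin}), and the trajectory stays in $\calM \setminus\{x^*\}$, we get $f(x) + t > 0$. Hence $a(x) \geq -f(x)$.

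\textbf{Step 2: the interval is exactly $(-f(x), \infty)$.} I will use the escape lemma: if $b(x) < \infty$, then $\nu(x,t)$ leaves every compact subset of $\calM\setminus\{x^*\}$ as $t \to b(x)$. For $t \in [0, b(x))$ the trajectory lies in the set
\[
K = \{ y : f(x) \leq f(y) \leq f(x) + b(x) \}.
\]
This set is compact by coercivity and avoids $x^*$ because $f(x) > 0$. The trajectory therefore cannot escape $K$, which is a contradiction, so $b(x) = \infty$.

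Similarly, suppose $a(x) > -f(x)$. Then for $t \in (a(x), 0]$ the trajectory lies in
\[
K' = \{ y : f(x) + a(x) \leq f(y) \leq f(x) \}.
\]
This set is compact and avoids $x^*$ because $f(x) + a(x) > 0$. That contradicts escape again, so $a(x) = -f(x)$.

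\textbf{Step 3: convergence to $x^*$.} As $t \to -f(x)$ we have $f(\nu(x,t)) \to 0$. I expect this to be the main (if mild) obstacle: it requires that $f(y_j) \to 0$ forces $y_j \to x^*$. All $y_j$ lie in the compact sublevel set $\{f \leq f(x)\}$. Any subsequential limit $\bar y$ satisfies $f(\bar y) = 0$ by continuity, so $\bar y = x^*$ by uniqueness of the minimizer. By compactness, the whole sequence converges to $x^*$. This completes the proof.
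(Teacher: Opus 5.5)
Your proof is correct and follows the paper's argument in all essentials. Both use the level identity $f(\nu(x,t)) = f(x)+t$, then the compactness of level bands $\{c_1 \le f \le c_2\}$ with $c_1 > 0$ (which avoid $x^*$) to rule out escape in finite time, then compactness plus uniqueness of the minimizer to get $\nu(x,t) \to x^*$. The only difference is cosmetic: the paper gets global existence by multiplying the vector field by a bump function supported in such a band, so the truncated field is compactly supported and hence complete, whereas you apply the escape lemma directly to the maximal flow on $\calM \setminus \{x^*\}$.
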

\begin{proof}
    This is a consequence of the fundamental theorem of flows together with the fact that $\ddt f(\nu(x, t)) = \inner{\grad f(\nu(x, t))}{\ddt \nu(x, t)}_{\nu(x, t)} = 1$, by design.
    See Appendix~\ref{app:rescaledflow} for details.
\end{proof}

The heavy lifting is done by the next lemma.
This is where each gradient flow trajectory on $\calM$ is mapped to a ray of $\Rn$ (amounting to a diffeomorphism $\varphi$ from $\calM$ to $\Rn$) such that $(f \circ \varphi^{-1})(y) = f(x^*) + \|y\|^2$.
In particular, the level sets of $f$ are deformed by $\varphi$ to become spheres.

\begin{lemma} \label{lem:globalquadraticsinglecp}
    Let $f \colon \calM \to \reals$ be a smooth, coercive function, and suppose $f$ has a unique critical point $x^*$.
    Assume further that there exists $r > 0$ such that $f(x) = f(x^*) + \dist(x, x^*)^2$ for all $x \in \calM$ with $\dist(x, x^*) \leq r$.
    Then, there exists a diffeomorphism $\varphi \colon \calM \to \Rn$ such that $f(x) = f(x^*) + \|\varphi(x)\|^2$ for all $x \in \calM$.
\end{lemma}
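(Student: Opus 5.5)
Without loss of generality take $f(x^*) = 0$. The plan is to use the rescaled gradient flow $\nu$ of Lemma~\ref{lem:GFrescaledoncoerciveinvexM} to send every point down its trajectory to the small ball $B := \{x : \dist(x,x^*) \leq r\}$. There $f$ is an exact squared distance, so we can read off a direction in $\T_{x^*}\calM \cong \Rn$ via $\Log_{x^*}$. We then rescale that direction so its norm squared equals $f(x)$.

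\textbf{Step 1: local structure.} Fix $0<\epsilon<r^2$, with $r$ also smaller than the injectivity radius at $x^*$; shrinking $r$ keeps the hypothesis true. On $B\setminus\{x^*\}$ we have $f = \dist(\cdot,x^*)^2$. The gradient there is $\grad f(x) = -2\Log_x(x^*)$, which is radial of norm $2\dist(x,x^*)$. Hence the rescaled flow \eqref{eq:rescaledflow} moves along radial geodesics from $x^*$. Explicitly, for $x$ with $f(x)\le r^2$,
\[ \nu(x,t) = \Exp_{x^*}\!\Big(\sqrt{f(x)+t}\,\tfrac{\Log_{x^*}(x)}{\|\Log_{x^*}(x)\|}\Big). \]
This follows by checking the ODE, since the radial speed $\tfrac{d}{dt}\sqrt{f+t} = 1/(2\sqrt{f+t})$ matches $\grad f/\|\grad f\|^2$.

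\textbf{Step 2: definition of $\varphi$.} Identify $\T_{x^*}\calM$ with $\Rn$ by a linear isometry. For $x \neq x^*$ set $y(x) := \nu(x, \epsilon - f(x))$, which is well defined by Lemma~\ref{lem:GFrescaledoncoerciveinvexM} since $\epsilon - f(x) > -f(x)$. The point $y(x)$ lies on the level set $\{f=\epsilon\}$, which sits inside $B$. Define
\[ \varphi(x) := \sqrt{f(x)}\,\frac{\Log_{x^*}(y(x))}{\|\Log_{x^*}(y(x))\|} = \sqrt{\tfrac{f(x)}{\epsilon}}\,\Log_{x^*}(y(x)), \qquad \varphi(x^*) := 0. \]
Then $\|\varphi(x)\|^2 = f(x)$ by construction. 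Away from $x^*$, $\varphi$ is smooth because $\nu$ is smooth and $f>0$.

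For the inverse, take $v\neq 0$ and let $x = \nu\big(\Exp_{x^*}(\sqrt\epsilon\, v/\|v\|),\, \|v\|^2-\epsilon\big)$. This is smooth in $v\neq0$. It inverts $\varphi$ because of the flow property $\nu(\nu(x,s),t)=\nu(x,s+t)$, together with $f(\nu(x,t)) = f(x)+t$. So $\varphi$ restricts to a diffeomorphism $\calM\setminus\{x^*\}\to\Rn\setminus\{0\}$. Surjectivity and the fact that the orbits of $\nu$ exhaust $\calM\setminus\{x^*\}$ both come from Lemma~\ref{lem:GFrescaledoncoerciveinvexM}.

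\textbf{Step 3: smoothness at $x^*$ (main obstacle).} This is where the local normal form is essential. By Step 1, for $f(x)\le\epsilon$ the flow only rescales along the radial geodesic, so $y(x) = \Exp_{x^*}\big(\sqrt\epsilon\,\Log_{x^*}(x)/\|\Log_{x^*}(x)\|\big)$. Using $\sqrt{f(x)} = \|\Log_{x^*}(x)\|$, we get
\[ \varphi(x) = \Log_{x^*}(x) \quad\text{on } \{f<\epsilon\}. \]
This is a diffeomorphism near $x^*$, namely the inverse of normal coordinates. So $\varphi$ is smooth with invertible differential at $x^*$, and its inverse near $0$ is $\Exp_{x^*}$. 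Combined with Step 2, $\varphi$ is a smooth bijection with smooth inverse, hence a global diffeomorphism.

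The delicate points to verify carefully are these:
\begin{itemize}
\item the explicit radial formula for $\nu$ inside $B$, including that trajectories starting in $B$ stay in $B$ (they do, since $f$ is monotone along them);
\item that $\{f\le\epsilon\}$ coincides with the geodesic ball of radius $\sqrt\epsilon$, which needs $\epsilon<r^2$ and uniqueness of the minimizer so that no other region has $f\le\epsilon$.
\end{itemize}
For the second point, argue as follows. By coercivity and continuity, the set $\{f\le \epsilon\}$ is connected: every point flows down to $x^*$ with $f$ decreasing along the flow. Inside $B$ it equals the ball of radius $\sqrt{\epsilon}$. Outside $B$ it must be empty, because a path from such a point to $x^*$ along the negative flow keeps $f\le\epsilon$ yet crosses $\partial B$, where $f=r^2>\epsilon$.
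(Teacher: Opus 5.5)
Your proof is correct and essentially the paper's argument. Like the paper, you flow each point along $\nu$ to a fixed small level set (the paper uses $f = r^2/4$, i.e., your $\epsilon = r^2/4$). You read off a direction via $\Log_{x^*}$, rescale it by $\sqrt{f(x)}$, use the explicit radial form of the flow inside the ball to match $\Log_{x^*}$ near $x^*$, and invert via the flow property. The only difference is presentation: the paper defines $\varphi$ and its inverse piecewise on two overlapping regions and checks agreement on the overlap, whereas you use a single flow formula off $x^*$ and then identify $\varphi$ with $\Log_{x^*}$ on $\{f<\epsilon\}$.
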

\begin{proof}
    Without loss of generality, we may assume $f(x^*) = 0$.
    By Lemma~\ref{lem:coerciveuniquecpglobalmin}, we know $f(x) \geq f(x^*) = 0$ for all $x$.
    By Lemma~\ref{lem:GFrescaledoncoerciveinvexM}, the flow map $\nu$~\eqref{eq:rescaledflow} is smoothly defined on
    \begin{align*}
        \big\{ (x, t) \in \calM \times \reals : x \neq x^* \textrm{ and } t > -f(x) \big\},
    \end{align*}
    in such a way that $\nu(x, 0) = x$ and $f(\nu(x, t)) = f(x) + t$.

    We aim to map each $x \in \calM$ to a vector in $\Rn$.
    Since $\T_{x^*}\calM$ is isometric to $\Rn$, it is enough to map each $x$ to a vector in $\T_{x^*}\calM$.
    (Explicitly, these can then be expanded in an orthonormal basis of $\T_{x^*}\calM$ to obtain a vector of coordinates in $\Rn$ with the same norm.)

    To do so, reduce $r$ if need be so it becomes smaller than the injectivity radius of $\calM$ at $x^*$ (but still positive). 
    Then, by definition, the Riemannian exponential $\Exp_{x^*}$ is a diffeomorphism from the open ball of radius $r$ around the origin in $\T_{x^*}\calM$ to the open ball of radius $r$ around $x^*$ on $\calM$.
    The inverse of $\Exp_{x^*}$ on those domains is denoted by $\Log_{x^*}$.
    On these domains, we have $\dist\!\big(\Exp_{x^*}(v), x^*\big) = \|v\|_{x^*}$ and $\|\Log_{x^*}(x)\|_{x^*} = \dist(x, x^*)$~\citep[Prop.~6.11]{lee2018riemannian}, \citep[Prop.~10.22]{boumal2020intromanifolds}.

    We separate $\calM$ in two overlapping regions, and define a mapping $\varphi \colon \calM \to \T_{x^*}\calM$ on each region.
    To this end, consider an arbitrary $x \in \calM$.
    \begin{itemize}
    \item On the one hand, if $\dist(x, x^*) < r$, then we can use the fact that $f(x) = \dist(x, x^*)^2$ to define a vector in $\T_{x^*}\calM$ as follows:
        \begin{align*}
            \varphi(x) = \Log_{x^*}(x) && \textrm{ for } x \in \calM \textrm{ such that } \dist(x, x^*) < r.
        \end{align*}
        Thus we have $\|\varphi(x)\|_{x^*}^2 = \dist(x, x^*)^2 = f(x)$, as desired.

    \item On the other hand, if $\dist(x, x^*) > r/2$, then we can use the flow $\nu$~\eqref{eq:rescaledflow} to bring $x$ to a point $x' = \nu(x, t)$ such that $\dist(x', x^*) = r/2$.
        Such a time $t > -f(x)$ exists because the trajectory $s \mapsto \nu(x, s)$ converges to $x^*$ as $s \to -f(x)$, so it must traverse the sphere of radius $r/2$ at least once.
        Then, we know two things:
        \begin{align*}
        \textrm{(a)} \quad f(x') = \dist(x', x^*)^2 = (r/2)^2 = r^2/4
        && \textrm{ and } &&
        \textrm{(b)} \quad f(\nu(x, t)) = f(x) + t.
        \end{align*}
        Thus, $t$ is actually unique: $t = r^2/4 - f(x)$.
        We can then define a vector in $\T_{x^*}\calM$ as follows:
        \begin{align*}
            \varphi(x) = \frac{2\sqrt{f(x)}}{r} \Log_{x^*}\!\Big(\nu\big(x, r^2/4 - f(x)\big)\Big) && \textrm{ for } x \in \calM \textrm{ such that } \dist(x, x^*) > r/2.
        \end{align*}
        Here too, $\|\varphi(x)\|_{x^*}^2 = \frac{4f(x)}{r^2} \dist(x', x^*)^2 = f(x)$, as desired.
    \end{itemize}
    It is clear that the two separate definitions of $\varphi$ are smooth.
    Two tasks remain:
    \begin{enumerate}
        \item Show that the two definitions agree on the overlap of the two regions: $\{x \in \calM : r/2 < \dist(x, x^*) < r\}$, upon which we can claim that $\varphi$ is smooth on all of $\calM$.
        \item Argue that $\varphi$ has a smooth inverse, to confirm that $\varphi$ is a diffeomorphism.
    \end{enumerate}

    \paragraph{\boldmath Definitions of $\varphi$ agree on overlap:} Regarding the first item, consider a point $x \in \calM$ such that $r/2 < \dist(x, x^*) < r$.
    Then $\nu(x, t)$ remains in the ball of radius $r$ around $x^*$ for all $t \in (-f(x), 0]$.
    In that ball, $f(\cdot) = \dist(\cdot, x^*)^2$.
    Thus, we can integrate the flow and write the solution $t \mapsto \nu(x, t)$ explicitly: it follows the geodesic from $x$ down to $x^*$ as $\nu(x, t) = \Exp_{x^*}\big(\alpha(t) \Log_{x^*}(x)\big)$ for some scalar function $\alpha$.
    Moreover, $\alpha$ satisfies
    \begin{equation*}
        \alpha(t)^2 f(x) = \alpha(t)^2 \dist(x, x^*)^2 = \dist\!\big(\nu(x, t), x^*\big)^2 = f(\nu(x, t)) = f(x) + t,
    \end{equation*}
    meaning that $\alpha(t) = \sqrt{\frac{f(x) + t}{f(x)}}$.
    At $t = r^2/4 - f(x)$, we find $\alpha(t) = \frac{r}{2\sqrt{f(x)}}$, and so
    \begin{equation*}
        \Log_{x^*}\!\big(\nu(x, t)\big) = \alpha(t) \Log_{x^*}(x) = \frac{r}{2\sqrt{f(x)}} \Log_{x^*}(x).
    \end{equation*}
    This confirms that $\varphi(x) = \Log_{x^*}(x)$ with both the first and second definitions of $\varphi$.
    Thus, $\varphi \colon \calM \to \T_{x^*}\calM$ is well defined and smooth.

    \paragraph{\boldmath Smooth inverse of $\varphi$:} Now turning to the second item, we need to show that $\varphi$ is a diffeomorphism.
    To this end, we build its inverse and show that it is smooth too.
    Consider $\xi \colon \T_{x^*}\calM \to \calM$, defined as follows:
    \begin{align*}
        \xi(v) = \begin{cases}
            \Exp_{x^*}(v) & \textrm{ if } \|v\|_{x^*} < r, \\
            \nu(x, t) & \textrm{ if } \|v\|_{x^*} > r/2, \textrm{ where } x = \Exp_{x^*}\!\left( \frac{r/2}{\|v\|_{x^*}} v \right) \textrm{ and } t = \|v\|_{x^*}^2 - f(x).
        \end{cases}
    \end{align*}

    Here too, $\xi$ is smoothly defined on two overlapping domains, and we need to check that the two definitions agree on their intersection.
    To see this, consider a point $v \in \T_{x^*}\calM$ such that $r/2 < \|v\|_{x^*} < r$.
    Then $x = \Exp_{x^*}\!\Big( \frac{r/2}{\|v\|_{x^*}} v \Big)$ is in the ball of radius $r$ around $x^*$.
    Moreover, the flow $\nu(x, t)$ remains in that ball for all $t \in (-f(x), r^2 - f(x))$ as then $f(\nu(x, t)) \leq r^2$.
    Thus, in that time interval, we can integrate the flow and write the solution $t \mapsto \nu(x, t)$ explicitly as we did before: $\nu(x, t) = \Exp_{x^*}(\alpha(t) v)$ for some function $\alpha$, which satisfies
    \begin{align*}
        \alpha(t)^2 \|v\|_{x^*}^2 = \dist\!\big(\nu(x, t), x^*\big)^2 = f(\nu(x, t)) = f(x) + t.
    \end{align*}
    It follows that $\alpha(t) = \frac{\sqrt{f(x) + t}}{\|v\|_{x^*}}$.
    At $t = \|v\|_{x^*}^2 - f(x)$, we find $\alpha(t) = 1$ and $\nu(x, t) = \Exp_{x^*}(v)$.
    This confirms that $\xi(v)$ is equal to $\Exp_{x^*}(v)$ with both the first and second definitions of $\xi$.

    It remains to check that $\varphi$ and $\xi$ are inverses of each other.
    For $v \in \T_{x^*}\calM$ such that $\|v\|_{x^*} < r$, we have
    \begin{align*}
      \varphi(\xi(v)) = \varphi\big(\Exp_{x^*}(v)\big) = \Log_{x^*}\!\big(\Exp_{x^*}(v)\big) = v.
    \end{align*}
    Now let $v \in \T_{x^*}\calM$ such that $\|v\|_{x^*} > r/2$.
    In this case, $\xi(v) = \nu(x, t)$ with $x := \Exp_{x^*}\!\Big( \frac{r/2}{\|v\|_{x^*}} v \Big)$ and $t := \|v\|_{x^*}^2 - f(x)$.
    Using the identities $f(\nu(x, t)) = f(x) + t$ and $\nu(\nu(x, t_1), t_2) = \nu(x, t_1 + t_2)$, we find
    \begin{align*}
        \varphi(\xi(v)) & = \varphi(\nu(x, t)) \\
                        & = \frac{2\sqrt{f(\nu(x, t))}}{r} \Log_{x^*}\!\Big(\nu\Big(\nu(x, t), r^2/4 - f(\nu(x, t))\Big)\Big) \\
                        & = \frac{2\sqrt{f(x) + t}}{r} \Log_{x^*}\!\Big(\nu\big(x, r^2/4 - f(x)\big)\Big) \\
                        & = \frac{2\|v\|_{x^*}}{r} \Log_{x^*}(x) \\
                        & = v,
    \end{align*}
    where we also used that
    \begin{align*}
      f(x) = \dist(x, x^*)^2 = r^2/4 && \text{so that} && \nu\big(x, r^2/4 - f(x)\big) = \nu(x, 0) = x.
    \end{align*}
    In all cases, $\varphi \circ \xi$ is the identity on $\T_{x^*}\calM$.

    The other way around, we now let $x \in \calM$ and show that $\xi(\varphi(x)) = x$.
    If $\dist(x, x^*) < r$, then
    \begin{align*}
      \xi(\varphi(x)) = \xi(\Log_{x^*}(x)) = \Exp_{x^*}(\Log_{x^*}(x)) = x.
    \end{align*}
    If $\dist(x, x^*) > r/2$, then $v := \varphi(x)$ has norm $\|v\|_{x^*} = \sqrt{f(x)} > r/2$.
    This is because the value of $f$ along a trajectory increases from $0$ to $r^2/4$ as it travels from $x^*$ to the sphere of radius $r/2$; and then it keeps increasing so that the trajectory cannot go back into the sphere of radius $r/2$.
    Thus,
    \begin{align*}
        \xi(\varphi(x)) = \xi(v) = \nu\!\left( x', f(x) - f(x') \right) && \textrm{ with } && x' = \Exp_{x^*}\!\bigg( \frac{r}{2\sqrt{f(x)}} v \bigg).
    \end{align*}
    Plugging the expression for $v = \varphi(x)$ into the definition of $x'$, we find $x' = \nu\big(x, r^2/4 - f(x)\big)$.
    Here too using the property $\nu(\nu(x, t_1), t_2) = \nu(x, t_1 + t_2)$, it follows that
    \begin{align*}
        \xi(\varphi(x)) & = \nu\!\left( \nu\big(x, r^2/4 - f(x)\big), f(x) - f(x') \right) = \nu\big(x, r^2/4 - f(x')\big) = \nu(x, 0) = x,
    \end{align*}
    because $f(x') = \dist(x', x^*)^2 = r^2/4$.
    In all cases, $\xi \circ \varphi$ is the identity on $\calM$.
\end{proof}

\section{The general case} \label{sec:generalcase}

In this section, we prove Theorem~\ref{thm:plnonlinlstsq} about globally \pl{} functions $f \colon \calM \to \reals$, following the strategy laid out in Section~\ref{sec:prooftechnique}.
We start by defining and studying the end-point map $\pi$ in Section~\ref{sec:endpointmap}.
In particular, we conclude there that the set of minimizers $S$ is a smooth manifold (a strong deformation retract of $\calM$) and that $\pi \colon \calM \to S$ is a smooth submersion.
Then, we proceed in Section~\ref{sec:fiberbundle} to show that $\pi$ is a smooth fiber bundle---a trivial one under the assumption that $\calM$ is contractible.
The construction is explicit so as to exert additional control over $f$.
The proof of Theorem~\ref{thm:plnonlinlstsq} then reduces to a corollary, with details in Section~\ref{sec:proofplnonlinlstsq}.

\subsection{The end-point map of negative gradient flow} \label{sec:endpointmap}

Let us open with a few basic facts about negative gradient flow on $f$.

\begin{lemma} \label{lem:basicflowproperties}
    Let $f \colon \calM \to \reals$ be smooth and globally \pl{}.
    Negative gradient flow on $f$ defines a flow map $\Phi \colon (y, t) \mapsto \Phi^t(y)$ via
    \begin{align*}
        \ddt \Phi^t(y) = -\grad f(\Phi^t(y)) && \textrm{ and } && \Phi^0(y) = y.
    \end{align*}
    The following properties hold:
    \begin{enumerate}
        \item The domain of $\Phi$ is open in $\calM \times \reals$, and $\Phi$ is smooth. 
        \item For all $y \in \calM$, the trajectory $t \mapsto \Phi^t(y)$ is defined for all $t \geq 0$.
        \item For all $y \in \calM$, the limit $\Phi^{\infty}(y) := \lim_{t \to \infty} \Phi^t(y)$ exists and is a critical point of $f$.
        \item For all $t \in \reals$, the map $\Phi^t$ is a diffeomorphism from $M_t$ to $M_{-t}$, where $M_t = \{y \in \calM : (y, t) \textrm{ is in the domain of } \Phi\}$. In particular, $M_t = \calM$ for all $t \geq 0$.
    \end{enumerate}
\end{lemma}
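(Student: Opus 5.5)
We prove the four items in order; items 1 and 4 are general facts about smooth flows, while items 2 and 3 are where the \pl{} assumption enters.

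\emph{Item 1.} Since $f$ is smooth, $-\grad f$ is a smooth vector field on $\calM$. By the fundamental theorem of flows (e.g., Lee, \emph{Introduction to Smooth Manifolds}, Thm.~9.12), $\Phi$ is then a maximal flow with open domain $\mathcal{D} \subseteq \calM \times \reals$. That theorem also gives that $\Phi$ is smooth on $\mathcal{D}$ and that each orbit is defined on an open interval containing $0$.

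\emph{Items 2 and 3.} These follow from Lemma~\ref{lemma:bound-trajectories-QG}, applied to the trajectory $x(t) = \Phi^t(y)$. That lemma says the forward trajectory exists for all $t \geq 0$ and converges to a critical point $x_\infty$. For completeness we would recall why the trajectory cannot escape in finite forward time.
\begin{itemize}
  \item Along the flow, $\frac{\mathrm{d}}{\mathrm{d}t} f(\Phi^t(y)) = -\|\grad f\|^2 \leq 0$, so the trajectory stays in the sublevel set $\{f \leq f(y)\}$.
  \item Its length is bounded. With $g = f - f^*$ and \eqref{eq:PL}, we have $\frac{\mathrm{d}}{\mathrm{d}t} \sqrt{g} = -\|\grad f\|^2 / (2\sqrt{g}) \leq -\sqrt{\mu/2}\,\|\grad f\|$, and integrating gives the length bound.
  \item Hence the trajectory stays in a closed bounded set. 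This set is compact by completeness and Hopf--Rinow.
  \item The escape lemma (a maximal integral curve defined on a bounded interval must leave every compact set) then gives $M_t = \calM$ for all $t \geq 0$.
\end{itemize}
The limit exists because the curve has finite length in a complete space. The limit is critical because $\int_0^\infty \|\grad f\|^2 < \infty$ and $\grad f$ is continuous.

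\emph{Item 4.} This is the standard group law of maximal flows. If $(y,t) \in \mathcal{D}$, then $(\Phi^t(y), -t) \in \mathcal{D}$ and $\Phi^{-t}(\Phi^t(y)) = y$. Thus $\Phi^t$ maps $M_t$ into $M_{-t}$ with inverse $\Phi^{-t}$. Both maps are smooth as restrictions of $\Phi$, and $M_t$ is open because $\mathcal{D}$ is open. Combined with item 2, this gives $M_t = \calM$ for $t \geq 0$.

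The only real obstacle is global forward existence. It needs the \pl{}-based length bound together with completeness of $\calM$, and both are packaged in Lemma~\ref{lemma:bound-trajectories-QG}. Everything else is textbook flow theory.
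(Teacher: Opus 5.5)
Your proposal is correct and follows essentially the same route as the paper. Like the paper, you use the fundamental theorem of flows for items 1 and 4, and the \pl{}-based length bound of Lemma~\ref{lemma:bound-trajectories-QG} together with completeness and the escape lemma for forward existence and convergence to a critical point; you additionally spell out the $\sqrt{f - f^*}$ differential inequality, the Hopf--Rinow compactness step and the integrability of $\|\grad f\|^2$, which the paper leaves to its citations.
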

\begin{proof}
    See the fundamental theorem of flows in~\citep[Thm.~9.12]{lee2012smoothmanifolds}.
    The fact that all trajectories are defined for all positive times follows from the Escape Lemma~\citep[Lem.~9.19]{lee2012smoothmanifolds} and the boundedness of their length for $t \in [0, \infty]$, owing to the \pl{} condition (Lemma~\ref{lemma:bound-trajectories-QG}).
    The latter further implies that they converge to a point, which must be critical.
\end{proof}

Since each trajectory of negative gradient flow on $f$ has a well-defined limit, we can define the  \emph{end-point map}
\begin{align}
    \pi \colon \calM \to S, && \pi(y) := \lim_{t \to \infty} \Phi^t(y).
    \label{eq:pi}
\end{align}
We know $\pi$ is surjective since it is identity on $S$.
Using standard arguments, we further argue in Proposition~\ref{prop:picontinuous} that $\pi$ is continuous and moreover that $\calM$ strongly deformation retracts to $S$ (Definition~\ref{def:deformationretract}).
This implies that $\calM$ and $S$ are \emph{homotopy equivalent}~\citep[p.~200]{lee2011topological}.
Therefore, $\calM$ and $S$ share topological properties called \emph{homotopy invariants}, including contractibility~\citep[Ex.~7.41]{lee2011topological}.

The construction of deformation retractions based on gradient flows is classical, with similar examples in~\citep[Thm.~5]{lojasiewicz1963propriete} and~\citep[Prop.~3]{kurdyka1998gradientsomin} applied to other classes of functions (real-analytic and definable in an o-minimal structure, respectively).

\begin{proposition} \label{prop:picontinuous}
    Let $f \colon \calM \to \reals$ be a smooth, globally $\mu$-\pl{} function, and let $S$ denote its set of critical points.
    Then the end-point map $\pi$~\eqref{eq:pi} is continuous, and $\pi(x) = x$ if and only if $x$ is in $S$.
    In particular, $S$ is connected.
    Moreover, $\calM$ strongly deformation retracts to $S$ so that $\calM$ is contractible if and only if $S$ is so.
\end{proposition}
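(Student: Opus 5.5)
The plan is to prove continuity of $\pi$ from a uniform tail estimate, and then build the strong deformation retraction by reparametrizing time in the flow $\Phi$ of Lemma~\ref{lem:basicflowproperties}.

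\textbf{Tail estimate.} Apply Lemma~\ref{lemma:bound-trajectories-QG} to the trajectory started at $\Phi^t(y)$. Since $\pi(y) = \pi(\Phi^t(y))$, this gives
\begin{align*}
  \dist\big(\Phi^t(y), \pi(y)\big) \leq \sqrt{\tfrac{2}{\mu}\big(f(\Phi^t(y)) - f^*\big)}.
\end{align*}
Along the flow, $\ddt (f(\Phi^t(y)) - f^*) = -\|\grad f\|^2 \leq -2\mu (f - f^*)$. Grönwall then yields $f(\Phi^t(y)) - f^* \leq e^{-2\mu t}(f(y) - f^*)$. Combining the two,
\begin{align*}
  \dist\big(\Phi^t(y), \pi(y)\big) \leq \sqrt{\tfrac{2}{\mu}(f(y)-f^*)}\, e^{-\mu t}.
\end{align*}

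\textbf{Continuity of $\pi$.} Fix $y_0$ and a neighborhood on which $f$ is bounded, say by $c$. Then $\Phi^t \to \pi$ uniformly on that neighborhood as $t \to \infty$. Each $\Phi^t$ is continuous (smooth, by Lemma~\ref{lem:basicflowproperties}), so $\pi$ is continuous as a locally uniform limit of continuous maps. If $x \in S$, then $\grad f(x) = 0$, the trajectory is constant and $\pi(x) = x$. Conversely, if $\pi(x) = x$, then $x$ is a limit of a trajectory, hence a critical point by Lemma~\ref{lemma:bound-trajectories-QG}, so $x \in S$. Thus $\pi$ is a retraction $\calM \to S$ onto its image $S$. $S$ is then connected as the continuous image of the connected space $\calM$.

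\textbf{Deformation retraction.} Define $H(x,s) = \Phi^{s/(1-s)}(x)$ for $s \in [0,1)$, and $H(x,1) = \pi(x)$. Then $H(x,0) = x$, $H(x,1) \in S$, and $H(a,s) = a$ for all $a \in S$ and all $s$, since points of $S$ are fixed by the flow.
\begin{itemize}
  \item Continuity on $\calM \times [0,1)$ follows from smoothness of $\Phi$.
  \item At points $(x_0, 1)$, use the triangle inequality: $\dist(H(x,s), \pi(x_0)) \leq \dist(\Phi^{t}(x), \pi(x)) + \dist(\pi(x), \pi(x_0))$ with $t = s/(1-s)$. The first term is at most $\sqrt{2c/\mu}\, e^{-\mu t}$ uniformly for $x$ near $x_0$, and the second is small by continuity of $\pi$.
\end{itemize}
This gives a strong deformation retraction of $\calM$ onto $S$. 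Hence $\calM$ and $S$ are homotopy equivalent, and contractibility transfers both ways as a homotopy invariant.

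I expect the main obstacle to be the uniform convergence near $s = 1$ in the last step. It is handled by the exponential decay estimate, which needs the PL inequality together with the quadratic-growth distance bound of Lemma~\ref{lemma:bound-trajectories-QG}.
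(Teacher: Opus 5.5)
Your proof is correct. Its overall structure matches the paper's: continuity of $\pi$, connectedness of $S$ as a continuous image, the homotopy $\Phi^{s/(1-s)}$, then transfer of contractibility. The difference is in how you get continuity.

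The paper argues qualitatively and locally. Near $x = \pi(y)$, it uses the bounded length of trajectories to find a neighborhood $V$ from which trajectories never leave a slightly larger neighborhood of $x$. It then transports $V$ back to a neighborhood of $y$ through $\Phi^{-t}$, using $\pi = \pi\circ\Phi^t$. Continuity of the homotopy at $s=1$ is then simply stated to follow from continuity of $\pi$.

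You instead combine the endpoint bound of Lemma~\ref{lemma:bound-trajectories-QG}, applied at $\Phi^t(y)$, with Gr\"onwall decay of $f - f^*$. This gives the global rate $\dist(\Phi^t(y),\pi(y)) \le \sqrt{2(f(y)-f^*)/\mu}\,e^{-\mu t}$, uniform on any set where $f$ is bounded. That one estimate does three things:
\begin{itemize}
\item it removes the need for the localize-then-transport step;
\item it gives continuity of $\pi$ as a locally uniform limit of the continuous maps $\Phi^t$;
\item it makes joint continuity of the homotopy at $s=1$ explicit.
\end{itemize}
That last point genuinely requires uniformity in $x$. The paper leaves it implicit, although its neighborhood $V$ could supply it.

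The paper's route, in turn, is rate-free: it only uses that trajectories started close to $S$ have short tails. Your appeal to homotopy invariance for the contractibility equivalence, instead of writing the homotopies out as the paper does, is fine.
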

\begin{proof}
    Recall from Lemma~\ref{lemma:bound-trajectories-QG} that $S$ is non-empty and closed.
    By Lemma~\ref{lem:basicflowproperties}, the map $\pi = \Phi^\infty$ is well defined, and the flow map $\Phi$ is continuous on its open domain in $\calM \times \reals$, which contains $\{(y, t) : t \geq 0\}$.

    \paragraph{\boldmath The map $\pi$ is continuous:}
    The proof consists in observing that $\pi$ is continuous on a neighborhood of $S$, and then globalizing via the identity $\pi = \pi \circ \Phi^t$ with some large $t$.
    Explicitly, fix $y \in \calM$ and let $x = \pi(y)$.
    Pick an arbitrary neighborhood $U$ of $x$.
    It is enough to build a neighborhood $B$ of $y$ such that $\pi(B) \subseteq U$.
    To this end, let $U'$ be a smaller neighborhood of $x$ whose closure is in $U$.
    Since trajectories have bounded length by the \pl{} condition (Lemma~\ref{lemma:bound-trajectories-QG}), there exists an open neighborhood $V$ of $x$ such that if $z$ is in $V$ then $\Phi^s(z)$ is in $U'$ for all $s \geq 0$.
    In particular, $\pi(V)$ is in the closure of $U'$, hence it is in $U$.
    Select $t \geq 0$ such that $z := \Phi^t(y)$ is in $V$.
    Let $W$ be the intersection of $V$ with the domain of $\Phi^{-t}$ (it contains $z$ and is open since the domain of $\Phi$ is open): this is a neighborhood of $z$.
    Define $B = \Phi^{-t}(W)$.
    Then $B$ is a neighborhood of $y$ (because $\Phi^{t}$ is continuous), and $\pi(B) = \pi(\Phi^t(B)) = \pi(W) \subseteq \pi(V) \subseteq U$, as needed.

    By assumption, $\calM$ is connected.
    We just argued $S$ is a continuous image of $\calM$, as $S = \pi(\calM)$.
    Thus, $S$ is connected.

    \paragraph{Deformation retraction:}
    Define the reparameterization $t(s) = s/(1-s)$, which is strictly increasing and maps $[0, 1]$ to $[0, \infty]$.
    Consider the map $F \colon \calM \times [0, 1] \to \calM$ defined as
    \begin{align*}
      F(y, s) = \Phi^{t(s)}(y).
    \end{align*}
    By the above properties, $F$ is well defined.

    From the continuity of $\pi$ we deduce that $F$ is continuous.
    Also, for all $y \in \calM$ we have
    \begin{align*}
      F(y, 0) = \Phi^0(y) = y && \textrm{ and } && F(y, 1) = \Phi^{\infty}(y) = \pi(y) \in S.
    \end{align*}
    Additionally, if $y$ is in $S$ then $\Phi^t(y) = y$ for all $t \geq 0$ (points in $S$ are fixed points of gradient flow), and hence $F(y, s) = \Phi^{t(s)}(y) = y$ for all $y \in S$ and $s \in [0, 1]$.
    We conclude that $F$ is a strong deformation retraction of $\calM$ onto $S$ (Definition~\ref{def:deformationretract}).

    \paragraph{Contractibility:}
    From the previous paragraph, it is immediate that $S$ and $\calM$ share various topological properties: they are homotopy equivalent. 
    This holds in particular for contractibility (Definition~\ref{def:contractible}).
    Let us spell out the details.

    Let $x$ be a point in $S$.

    If $\calM$ is contractible, then it deformation retracts onto any of its points, and in particular onto $\{x\}$. 
    Let $G \colon \calM \times [0, 1] \to \calM$ be a deformation retraction of $\calM$ onto $\{x\}$.
    (For example, when $\calM = \Rn$, one can take $G(y, t) = (1-t) y + t x$.)
    Then, $S$ also deformation retracts onto $\{x\}$ via $H \colon S \times [0, 1] \to S$ defined by $H(y, t) = \pi(G(y, t))$ (using both that $\pi$ is continuous and that it is identity on $S$).
    Therefore, $S$ is also contractible.

    The other way around, assume $S$ is contractible and let $H \colon S \times [0, 1] \to S$ deformation retract $S$ onto $\{x\}$.
    Using $F$ as defined above, build the map $G \colon \calM \times [0, 1] \to \calM$ as follows:
    \begin{align*}
        G(y, t) = \begin{cases}
            F(y, 2t) & \textrm{ if } t \in [0, 1/2], \\
            H(2t-1, \pi(y)) & \textrm{ if } t \in [1/2, 1].
        \end{cases}
    \end{align*}
    This map is continuous.
    It deformation retracts $\calM$ onto $\{x\}$, hence $\calM$ is contractible.
\end{proof}

Using more sophisticated tools, one can further show that $\pi$ is smooth, and even that it is a smooth submersion.
The heavy lifting is done by~\citet{falconer1983limitmapping}, whose proof relies on the center stable manifold theorem~\citep[Thm.~5.1]{hirsch1977invariant}.
Before we can apply those tools, we need to make sure $S$ is a smooth manifold.
For this part, we use the recent results integrated in Lemma~\ref{lem:MB}.

\begin{proposition} \label{prop:Ssmoothpisubmersion}
    (Continued from Proposition~\ref{prop:picontinuous}.)
    The set $S$ of critical points of $f$ is a smooth, properly embedded submanifold of $\calM$, and $\pi \colon \calM \to S$ is a smooth submersion.
\end{proposition}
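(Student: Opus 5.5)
The plan has three steps: first show $S$ is a manifold, then that $\pi$ is smooth, then that $\pi$ is a submersion.

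\textbf{Step 1: $S$ is a smooth, properly embedded submanifold.} By Lemma~\ref{lem:MB}, each connected component of $S$ is a smooth embedded submanifold of $\calM$. By Proposition~\ref{prop:picontinuous}, $S$ is connected. So $S$ is a single connected embedded submanifold, and in particular it has a well-defined dimension $m$. It is closed by Lemma~\ref{lemma:bound-trajectories-QG}. An embedded submanifold that is a closed subset is properly embedded, since its inclusion map is then proper.

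\textbf{Step 2: $\pi$ is smooth.} The main tool is \citet{falconer1983limitmapping}. His theorem says: if $S$ is a smooth manifold of equilibria of a smooth flow, and at each $x\in S$ the linearization $-\Hess f(x)$ has kernel exactly $\T_xS$, with the remaining spectrum strictly in the left half-plane, then the limit map is smooth on the basin of $S$.

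\begin{itemize}
\item The hypothesis is exactly~\eqref{eq:MB}: the kernel is $\T_xS$, and $-\Hess f(x)$ restricted to $\N_xS$ satisfies $-\Hess f(x)|_{\N_x S}\preceq -\mu\Id$. This is normal hyperbolicity with a uniform spectral gap.
\item Falconer's result, or the center stable manifold theorem of \citet{hirsch1977invariant} behind it, then gives a neighborhood $V$ of $S$ on which $\pi$ is smooth. The stable fibers through points of $S$ smoothly foliate $V$.
\item To globalize, use the identity $\pi=\pi\circ\Phi^t$, as in the proof of Proposition~\ref{prop:picontinuous}. For $y\in\calM$, pick $t\geq0$ with $\Phi^t(y)\in V$. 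Since $\Phi^t$ is smooth on an open set, $\pi$ is smooth near $y$.
\end{itemize}

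\textbf{Step 3: $\pi$ is a submersion.} Since $\pi$ restricted to $S$ is the identity, $\D\pi(x)$ restricted to $\T_xS$ is the identity for $x\in S$. Hence $\D\pi(x)$ is surjective onto $\T_xS$ at every point of $S$.

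For general $y$, differentiate $\pi=\pi\circ\Phi^t$ to get $\D\pi(y)=\D\pi(\Phi^t(y))\circ\D\Phi^t(y)$. Here $\D\Phi^t(y)$ is invertible because $\Phi^t$ is a diffeomorphism onto its image (Lemma~\ref{lem:basicflowproperties}). So surjectivity at $y$ reduces to surjectivity at $\Phi^t(y)$, which lies close to $\pi(y)$ for large $t$. Surjectivity is an open condition, so it holds on a neighborhood of $S$. Choosing $t$ so that $\Phi^t(y)$ lands in that neighborhood gives surjectivity at $y$.

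\textbf{Main obstacle.} The difficult step is Step 2: verifying that Falconer's hypotheses hold in our noncompact setting. His results are local around each equilibrium, which should suffice because smoothness is a local property. Where the hypotheses are stated for compact invariant manifolds, I would apply them to relatively compact pieces of $S$, using the pointwise spectral condition from~\eqref{eq:MB} at each point.
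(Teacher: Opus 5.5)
Your proposal is correct and follows the paper's proof essentially step for step: Lemma~\ref{lem:MB} together with connectedness and closedness of $S$ for the manifold structure, Falconer's theorem for smoothness of $\pi$, and the chain rule on $\pi = \pi \circ \Phi^t$ (with surjectivity of $\D\pi$ being an open condition near $S$) for the submersion property. The one adjustment is that Falconer's Theorem~5.1 is stated for discrete maps rather than flows, so the paper applies it to the time-one map $g = \Phi^1$ and checks pseudo-hyperbolicity pointwise via $\D g(x) = e^{-\Hess f(x)}$, which is the identity on $\T_x S$ and has eigenvalues in $(0, e^{-\mu}]$ on $\N_x S$. The paper concludes directly that $\pi = g^\infty$ is smooth, with no compactness workaround and no separate globalization step; your extra globalization via $\pi = \pi \circ \Phi^t$ is harmless.
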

\begin{proof}
    From Proposition~\ref{prop:picontinuous}, $S$ is connected.
    Combining with Lemma~\ref{lem:MB}, we deduce that $S$ is a smooth manifold embedded in $\calM$.
    It is properly embedded because $S$ is a closed subset of $\calM$~\citep[Prop.~5.5]{lee2012smoothmanifolds}.

    \paragraph{$\pi$ is smooth:}
    This is not trivial: it follows from~\citep[Thm.~5.1]{falconer1983limitmapping}.
    Let us add some context as to why.

    Falconer's theorem applies to the limit-point map of a \emph{discrete} dynamical system $y_{t+1} = g(y_t)$ with some $g \colon \calM \to \calM$.
    For our case, we can take $g = \Phi^1$, that is, the time-one map of negative gradient flow, which is smooth by Lemma~\ref{lem:basicflowproperties}.
    It is clear that the set of fixed points of $g$ is exactly $S$. 
    Pick one of these fixed points, $x \in S$.
    It is known that $\D g(x) = e^{-\nabla^2 f(x)}$ (exponential of negative the Hessian of $f$ at $x$)---this is a particular case of a standard fact which can be derived from~\citep[\S32.6, Lem.~8]{arnold2006ode} (details in~\citep{rttb2025timeonemap} or~\citep[Lem.~4.19]{banyaga2004morsehomology}).

    Recall from Lemma~\ref{lem:MB} that $\nabla^2 f(x)$ splits $\T_x \calM$ in two orthogonal subspaces which correspond to the tangent space $\T_x S$ and the normal space $\N_x S$ of
    $S$ at $x$.
    Indeed, $\T_x S$ is the kernel of $\nabla^2 f(x)$ because $\nabla f$ is constant (zero) on $S$.
    The orthogonal complement is also an invariant subspace of $\nabla^2 f(x)$: it corresponds to the nonzero eigenvalues of $\nabla^2 f(x)$, which are all at least $\mu$.
    Therefore, $\D g(x) \colon \T_x S \to \T_x S$ is the identity map, while $\D g(x) \colon \N_x S \to \N_x S$ is a symmetric map with all of its eigenvalues in the interval $(0, e^{-\mu}]$.
    In particular, $\D g(x)$ is a strict contraction on $\N_x S$.
    These considerations imply that $S$ is \emph{pseudo-hyperbolic} for $g$, as per the definition in~\citep[\S5]{falconer1983limitmapping}.
    Therefore, we may apply~\citep[Thm.~5.1]{falconer1983limitmapping} and conclude that $\pi = \Phi^\infty = g^\infty$ is indeed smooth.

    \paragraph{$\pi$ is a submersion:}
    To show that $\pi \colon \calM \to S$ is a smooth submersion, we argue that $\D\pi(y) \colon \T_y\calM \to \T_{\pi(y)} S$ is surjective for all $y \in \calM$.
    To this end, first fix an arbitrary $x \in S$.
    For any $u \in \T_x S$, let $c$ be a smooth curve on $S$ such that $c(0) = x$ and $c'(0) = u$.
    Then, $\pi(c(t)) = c(t)$ for all $t$, so that (after differentiating and evaluating at $t = 0$) we find $\D\pi(x)[u] = u$, that is, $\D\pi(x)$ is identity on $\T_x S$.
    It follows that $\D\pi(x)$ is surjective for all $x \in S$.
    By continuity, $\D\pi(y)$ is surjective for all $y$ in a neighborhood $U$ of $S$.
    Now take $y \in \calM$ arbitrary.
    Since $\pi(y)$ is in $S$, there exists $t \geq 0$ such that $\Phi^t(y)$ is in $U$.
    Moreover, $\pi = \pi \circ \Phi^t$.
    Differentiating the latter at $y$, we find
    \begin{align*}
        \D\pi(y) = \D\pi(\Phi^t(y)) \circ \D\Phi^t(y).
    \end{align*}
    By design, $\Phi^t(y)$ is in $U$, hence $\D\pi(\Phi^t(y))$ is surjective.
    By Lemma~\ref{lem:basicflowproperties}, $\Phi^t$ is a diffeomorphism from $\calM$ to its image, hence $\D\Phi^t(y)$ is invertible.
    It follows that $\D\pi(y)$ is surjective for all $y$, that is, $\pi$ is a submersion.
\end{proof}

The fiber of $\pi$~\eqref{eq:pi} for a critical point $x \in S$~\eqref{eq:S} is the set
\begin{align*}
    \calF = \pi^{-1}(x) = \{ y \in \calM : \pi(y) = x \}.
\end{align*}
It contains all initial points $y$ from where negative gradient flow on $f$ converges to $x$.
These fibers are nice manifolds themselves, and restricting $f$ to a fiber retains the \pl{} property.

\begin{proposition} \label{prop:plrestrictedtofiber}
    Let $f \colon \calM \to \reals$ be smooth and globally $\mu$-\pl{}.
    If $x$ is a critical point of $f$, then the fiber $\calF = \pi^{-1}(x)$ is a contractible, properly embedded smooth submanifold of $\calM$.
    With the Riemannian submanifold structure on $\calF$, the restriction $f|_{\calF} \colon \calF \to \reals$ is smooth and globally $\mu$-\pl{} with $x$ as its unique critical point.
    In particular, $\calF$ is diffeomorphic to $\Rk$ with $k = \dim \calM - \dim S$.
\end{proposition}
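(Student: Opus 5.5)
Since $\pi$ is a smooth submersion (Proposition~\ref{prop:Ssmoothpisubmersion}), the fiber $\calF = \pi^{-1}(x)$ is a regular level set. It is therefore a properly embedded smooth submanifold of $\calM$ of dimension $n - \dim S = k$: it is closed because $\pi$ is continuous.

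The key structural fact is that $\calF$ is invariant under the flow. Since $\pi = \pi \circ \Phi^t$, the flow $\Phi^t$ maps $\calF$ into $\calF$ for all $t \geq 0$. Moreover, for $y \in \calF$ with $(y,t)$ in the domain of $\Phi$ and $t<0$, we have $\pi(\Phi^t(y)) = \pi(y)$, so backward trajectories also stay in $\calF$. Hence the vector field $-\grad f$ is tangent to $\calF$ at every point of $\calF$. (Differentiate a trajectory inside the embedded submanifold $\calF$.) Consequently, the Riemannian gradient of $f|_{\calF}$, which is the orthogonal projection of $\grad f(y)$ onto $\T_y\calF$, equals $\grad f(y)$ itself. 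Thus $\|\grad (f|_{\calF})(y)\| = \|\grad f(y)\|$. Also $\inf f|_{\calF} = f^*$, attained at $x \in \calF$. Therefore \eqref{eq:PL} for $f$ transfers verbatim to $f|_{\calF}$ with the same $\mu$.

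The critical points of $f|_{\calF}$ are the points $y \in \calF$ with $\grad f(y)=0$, i.e., $\calF \cap S = \{x\}$, since $\pi$ is the identity on $S$. So $x$ is the unique critical point.

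$\calF$ is complete because it is properly embedded (closed) in the complete manifold $\calM$. Closed subsets of a complete metric space are complete for the restricted distance, and the intrinsic distance dominates the ambient one, so Cauchy sequences converge. $\calF$ is connected because every $y \in \calF$ is joined to $x$ by its flow trajectory inside $\calF$, together with the limit point. $\calF$ is contractible via the restricted deformation retraction $F$ from Proposition~\ref{prop:picontinuous}, which keeps $\calF$ inside $\calF$ and ends at $x$.

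Theorem~\ref{thm:globalsquaredistsinglecppl} then applies to $f|_{\calF}$ on the complete connected Riemannian manifold $\calF$, yielding a diffeomorphism $\calF \to \reals^{k}$.

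The main obstacle I expect is the tangency of $\grad f$ to $\calF$ at every point. This needs care only in making sure the flow stays in $\calF$ for small negative times as well. The argument above handles that via the openness of the flow domain; alternatively, forward invariance alone suffices, because the curve $t\mapsto\Phi^t(y)$, $t\ge0$, lies in the embedded $\calF$, so its one-sided derivative at $0$ lies in $\T_y\calF$.
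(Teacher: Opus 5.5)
Your proof is correct and follows essentially the same route as the paper. The fiber is a regular level set of the submersion $\pi$. Flow invariance makes $\grad f$ tangent to $\calF$, so the gradient of $f|_{\calF}$ equals $\grad f$ and \eqref{eq:PL} transfers with the same $\mu$. Contractibility comes from restricting the retraction $F$, completeness from properness, and then you conclude with Theorem~\ref{thm:globalsquaredistsinglecppl}. Your explicit verification of connectedness and your spelled-out completeness argument are details the paper leaves implicit or delegates to a textbook reference.
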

\begin{proof}
    We know $\pi \colon \calM \to S$ is a smooth submersion by Proposition~\ref{prop:Ssmoothpisubmersion}.
    Each fiber is a level set of $\pi$, hence it is a properly embedded smooth submanifold of $\calM$~\citep[Cor.~5.13]{lee2012smoothmanifolds}.
    It is also clear that $\calF$ is contractible: simply flow each point to $x$ using the negative gradient flow of $f$ (explicitly, consider the map $F$ in the proof of Proposition~\ref{prop:picontinuous}, restricted to $\calF \times [0, 1] \to \calF$).

    Endow $\calF$ with the Riemannian submanifold structure inherited from $\calM$.
    Then, it is complete because it is properly embedded in $\calM$ which is itself complete~\citep[13-18(b)]{lee2012smoothmanifolds}.

    Observe that the restriction of $f$ to $\calF$, denoted here by $g = f|_{\calF}$, is itself smooth and globally $\mu$-\pl{}, with a single critical point at $x$.
    Indeed, the trajectories of negative gradient flow for $f$ initialized in $\calF$ remain in $\calF$ by definition, so that $\grad f(y)$ is tangent to $\calF$ for all $y \in \calF$.
    The gradient of $g$ at $y$ is the orthogonal projection of $\grad f(y)$ to $\T_y \calF$, but it is already tangent hence $\grad g(y) = \grad f(y)$~\citep[eq.~(3.37)]{AMS08}.
    In particular, the norms of $\grad g(y)$ and $\grad f(y)$ are equal.
    By definition of the \eqref{eq:PL} property, it is now clear that $g$ is $\mu$-\pl{} simply because $f$ has that quality and $x$ is a global minimizer of $f$ hence also of $g$.
    The set of critical points of $g$ is $\calF \cap S = \{x\}$, as claimed.

    Apply Theorem~\ref{thm:globalsquaredistsinglecppl} to deduce that $\calF$ is diffeomorphic to $\Rk$ with $k = \dim \calF$.
\end{proof}

At this point, we can already claim that negative gradient flow on $f$ (without any particular assumption on $S$) induces a smooth fiber bundle structure (although it may or may not be trivial)---see Definition~\ref{def:smoothfiberbundle}.
This claim relies on a strong result by~\citet{meigniez2002submersions}.
In the next section, we give an explicit proof that also provides a \emph{trivial} fiber bundle structure assuming contractibility.

\begin{corollary} \label{cor:fiberbundlegeneral}
    Let $f \colon \calM \to \reals$ be smooth and globally \pl{}.
    Then, $\pi \colon \calM \to S$ is a smooth fiber bundle with fibers diffeomorphic to $\Rk$, $k = \dim \calM - \dim S$.
\end{corollary}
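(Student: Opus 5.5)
The plan is to invoke the result of \citet[Cor.~31]{meigniez2002submersions}, which, as mentioned in Section~\ref{sec:prooftechnique}, states (roughly) that a smooth submersion whose fibers are all diffeomorphic to $\Rk$ and which is surjective is a smooth fiber bundle. The work therefore reduces to checking its hypotheses for $\pi \colon \calM \to S$, using what is already established.

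First, by Proposition~\ref{prop:Ssmoothpisubmersion}, $S$ is a smooth connected properly embedded submanifold and $\pi \colon \calM \to S$ is a smooth submersion. Second, $\pi$ is surjective, since it is the identity on $S$ (Proposition~\ref{prop:picontinuous}). Third, by Proposition~\ref{prop:plrestrictedtofiber}, each fiber $\pi^{-1}(x)$, $x \in S$, is a properly embedded submanifold diffeomorphic to $\Rk$ with $k = \dim\calM - \dim S$. This last fact follows from dimension counting for a submersion, and also from the fact that the restriction of $f$ to the fiber is globally \pl{} with a single critical point, so that Theorem~\ref{thm:globalsquaredistsinglecppl} applies. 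With these three facts in hand, Meigniez's corollary yields local trivializations $h \colon \pi^{-1}(U) \to U \times \Rk$ over neighborhoods $U$ of each point of $S$, in the sense of Definition~\ref{def:smoothfiberbundle}.

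The main obstacle is matching the exact hypotheses of Meigniez's statement to ours. His results are phrased for submersions whose fibers are all contractible, or all diffeomorphic to $\Rk$, and in some versions require connectedness of fibers or a dimension restriction. I would check each of these:
\begin{itemize}
\item Fibers are connected, being diffeomorphic to $\Rk$.
\item Any low-dimensional exceptions are covered, because our fibers are already known to be $\Rk$ rather than merely contractible.
\end{itemize}
If a direct citation turned out not to fit, the fallback would be the explicit construction announced for Section~\ref{sec:fiberbundle}. There, one would build local trivializations near each $x \in S$ using the normalized flow of Lemma~\ref{lem:GFrescaledoncoerciveinvexM} applied fiberwise, together with a smooth family of Morse-type charts along $S$ near $x$ (from \eqref{eq:MB}). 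I would defer that to the subsequent section and simply note here that the corollary follows from the cited result.
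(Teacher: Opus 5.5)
Your proposal is correct and is essentially the paper's proof. Like the paper, you use Propositions~\ref{prop:Ssmoothpisubmersion}, \ref{prop:picontinuous} and~\ref{prop:plrestrictedtofiber} to check that $\pi$ is a surjective smooth submersion whose fibers are all diffeomorphic to $\Rk$, then apply \citep[Cor.~31]{meigniez2002submersions}, whose hypotheses match directly, so your fallback is not needed (the paper's own explicit construction in Section~\ref{sec:fiberbundle} uses horizontal lifts of curves in $S$ rather than fiberwise rescaled flows).
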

\begin{proof}
    From Proposition~\ref{prop:Ssmoothpisubmersion}, we know $S$ is a smooth submanifold of $\calM$ and $\pi$ is a surjective smooth submersion.
    Each fiber of $\pi$ is diffeomorphic to $\Rk$ by Proposition~\ref{prop:plrestrictedtofiber}.
    The claim now follows from \citep[Cor.~31]{meigniez2002submersions} which states that if the fibers of a surjective smooth submersion are diffeomorphic to $\Rk$ then that submersion is a smooth fiber bundle.
\end{proof}

\subsection{The fiber bundle structure} \label{sec:fiberbundle}

\begin{figure}[t]
    \centering
    \includegraphics[width=1.0\textwidth]{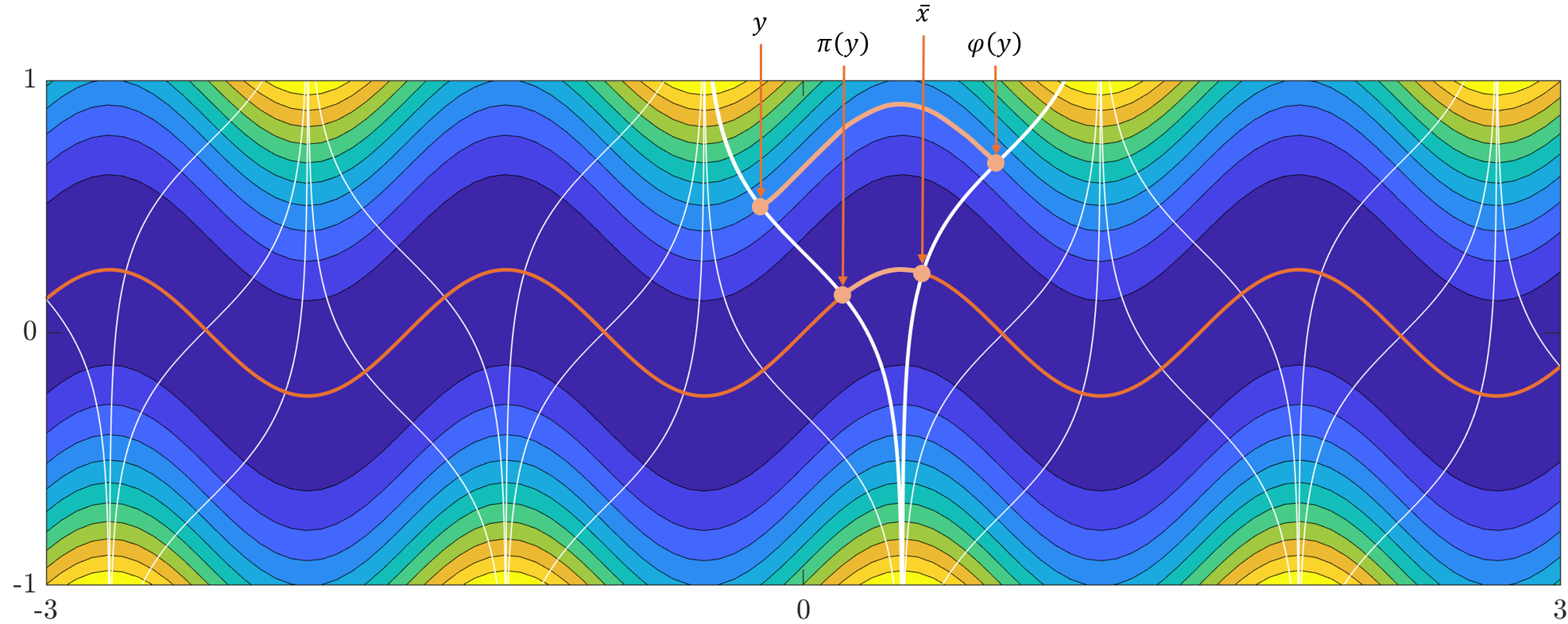}
    \caption{Illustration of the proof for Theorem~\ref{thm:pitrivial}. Background colors indicate level sets of $f(x) = \frac{1}{2}\big(\frac{1}{4}\sin(4x_1) - x_2\big)^2$, which is globally \pl{} on $\reals^2$. The set of minimizers $S$ is the dark orange curve. Fix $\bar{x} \in S$: the white curve passing through it is the fiber $\pi^{-1}(\bar x)$. Choose $y \in \reals^2$; then, $\pi(y)$ is on $S$ and the white curve passing through it is its fiber $\pi^{-1}(\pi(y))$. The points $\pi(y)$ and $\bar x$ can be connected by a smooth curve $c$ on $S$, with $c(0) = \pi(y)$ and $c(1) = \bar{x}$. The proof builds a lifted curve $\gamma$ in $\reals^2$ such that $\gamma(0) = y$, $\pi \circ \gamma = c$ and $f$ remains constant along $\gamma$. In particular, $\gamma(1)$ is on the fiber of $\bar{x}$. We call that point $\varphi(y)$, and we have $f(\varphi(y)) = f(y)$. This is done in a way that $\varphi$ is smooth.}
    \label{fig:PLflowbundle}
\end{figure}

It is only now that we introduce the assumption that $S$ is contractible (Definition~\ref{def:contractible}).
At a high level, since we know from Corollary~\ref{cor:fiberbundlegeneral} that the end-point map $\pi \colon \calM \to S$~\eqref{eq:pi} is a fiber bundle, it is clear from general results in differential topology that $\pi$ is a trivial fiber bundle if the base space $S$ is contractible: see~\citep[\S3.4B]{abraham1988manifolds} (including the note about \emph{smooth} fiber bundles at the end), or the \emph{covering homotopy theorem}~\citep[\S4, Thm.~1.5]{hirsch1976differential} (including Ex.~2, 3 thereafter) in the context of smooth vector bundles.

Here, we do not rely on those results, nor do we use Corollary~\ref{cor:fiberbundlegeneral}.
Instead, we build explicit trivializations of $\pi$ which allow us to retain control over the value of $f$.
This later enables the claim about the quadratic nature of $f$, and makes for a more transparent proof.\footnote{Before resorting to a bespoke proof of the fiber bundle structure, we were hoping to rely more on existing literature (see Section~\ref{sec:relatedwork}). Unfortunately, all results we could find involve compactness assumptions that (in our setting) would force $S$ to be a singleton (Corollary~\ref{cor:compactSsingleton}). The added benefit of crafting our own trivialization maps is that we can build them in a way that they play nicely with $f$.}

The construction below relies only on (a) the propositions from the previous section; (b) other basic properties of \pl{} functions; and (c) standard results for ordinary differential equations and smooth manifolds.

In spirit, it is similar to how one might prove \emph{Ehresmann's fibration theorem}.
The latter states that a \emph{proper} surjective smooth submersion is a (not necessarily trivial) smooth fiber bundle.
In our case, $\pi$ is typically \emph{not} proper because its fibers are diffeomorphic to $\Rk$.
Upon closer inspection, properness is used there to ensure that curves on $S$ can be lifted entirely to (special) curves on $\calM$.
These curves are solutions to ODEs: they exist for as long as they do not escape to infinity.
Such escapes are ruled out by properness, so the curves exist for all times.
In our case, we can ensure the same via the \pl{} assumption on $f$, by tapping into the relation between $\pi$ and $f$.

Figure~\ref{fig:PLflowbundle} illustrates the curve lifting part of our proof.
It goes as follows.
Fix $\bar x \in S$.
We let $y \in \calM$ be arbitrary, and push it to $S$ as $x = \pi(y)$.
Contractibility allows us to connect $x$ to $\bar x$ with a smooth curve $c \colon [0, 1] \to S$, $c(0) = x, c(1) = \bar x$, in a way that the curve itself depends smoothly on $x$.
We want to \emph{lift} $c$ to a curve $\gamma \colon [0, 1] \to \calM$ such that $\gamma(0) = y$.
That is, we aim to have $\pi \circ \gamma = c$.
Differentiating this readily shows that
\begin{align*}
    \D\pi(\gamma(t))[\gamma'(t)] = c'(t).
\end{align*}
This is not enough to determine $\gamma$, because $\D\pi$ has a kernel.
So, in addition, we require that $\gamma$ should be orthogonal to the fibers of $\pi$, that is, $\gamma$ should be a \emph{horizontal lift} of $c$:
\begin{align*}
    \gamma'(t) \perp \T_{\gamma(t)}(\pi^{-1}(c(t))) = \ker\D\pi(\gamma(t)).
\end{align*}
These two conditions together indeed fully determine the velocity of $\gamma$:
\begin{align}
    \gamma'(t) & = \D\pi(\gamma(t))^\dagger[c'(t)]
    \label{eq:gammaprimeidea}
\end{align}
where the dagger denotes the Moore--Penrose pseudoinverse.
Together with the initial condition $\gamma(0) = y$ (and some additional technical work), this yields a differential equation for $\gamma$.
We show that its solution exists for all $t \in [0, 1]$, so that $\gamma(1)$ is a well-defined function of $y$: we call it $\varphi(y)$.
Much of the proof serves the purpose of making sure that this is smooth in $y$ and has all the other required properties.
Notice $f$ is constant along $\gamma$ because
\begin{align*}
    (f \circ \gamma)'(t) = \D f(\gamma(t))[\gamma'(t)] = \inner{\grad f(\gamma(t))}{\gamma'(t)} = 0
\end{align*}
owing to the fact that $\gamma'$ is orthogonal to the fibers of $\pi$ while $\grad f$ is tangent to them.
This is how we get to conclude that $f(\varphi(y)) = f(y)$.


The proof of the following theorem formalizes these ideas.
It notably recovers Corollary~\ref{cor:fiberbundlegeneral} with added control over the value of $f$ through the trivializations, and readily extends to the globally trivial case under contractibility.

\begin{theorem} \label{thm:pitrivial}
    Let $f \colon \calM \to \reals$ be smooth and globally \pl{}.
    Its set $S$ of critical points is a smooth embedded submanifold of $\calM$, and the end-point map $\pi \colon \calM \to S$~\eqref{eq:pi} is a surjective smooth submersion.
    Fix a point $\bar x \in S$.
    Its fiber $\calF = \pi^{-1}(\bar x)$ is a smooth embedded submanifold of $\calM$.
    
    Let $U \subseteq S$ be a contractible (open) neighborhood of $\bar x$ (e.g., an appropriate chart domain).
    There exists a map $\varphi \colon \pi^{-1}(U) \to \calF$ such that
    \begin{itemize}
        \item The map $\psi \colon \pi^{-1}(U) \to U \times \calF \colon y \mapsto \psi(y) = (\pi(y), \varphi(y))$ is a diffeomorphism, and
        \item For all $y \in \pi^{-1}(U)$, we have $f(y) = f(\varphi(y))$.
    \end{itemize}
    Thus, $\pi$ is a smooth fiber bundle (Definition~\ref{def:smoothfiberbundle}).
    If $S$ is contractible (equivalently, if $\calM$ is so), the above holds with $U = S$ so that $\pi$ is a trivial smooth fiber bundle.
\end{theorem}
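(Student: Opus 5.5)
The first claims (that $S$ is a smooth embedded submanifold, that $\pi$ is a surjective smooth submersion, and that $\calF$ is a properly embedded submanifold) are already contained in Propositions~\ref{prop:Ssmoothpisubmersion} and~\ref{prop:plrestrictedtofiber}. The work is to construct $\varphi$ by horizontal lifting, as sketched before the statement.

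\textbf{Step 1: a smooth family of curves in $U$.} Since $U$ is contractible, I will choose a smooth homotopy $H \colon U \times [0,1] \to U$ with $H(x,0) = x$ and $H(x,1) = \bar x$. A continuous contraction can be smoothed by Whitney approximation, keeping the endpoint conditions. For a chart ball one can simply take the straight-line contraction in coordinates. I then set $c_x(t) = H(x,t)$.

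\textbf{Step 2: the horizontal lift.} On $\calM$, define the time-dependent vector field
\begin{align*}
  V(y,t) = \D\pi(y)^\dagger\big[\partial_t H(\pi(y),t)\big].
\end{align*}
It is smooth because $\D\pi(y)$ has constant full rank, so $\D\pi(y)^\dagger = \D\pi(y)^*(\D\pi(y)\D\pi(y)^*)^{-1}$ is smooth. Let $\gamma_y$ solve $\gamma' = V(\gamma, t)$ with $\gamma(0) = y$. Along any solution, $\pi(\gamma(t))$ and $c_{\pi(y)}(t)$ satisfy the same ODE on $S$ with the same initial point, so $\pi \circ \gamma = c_{\pi(y)}$. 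Moreover $\gamma'$ is orthogonal to $\ker \D\pi$, which contains $\grad f$ because $\pi \circ \Phi^t = \pi$. Hence $f \circ \gamma$ is constant.

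\textbf{Step 3: global existence on $[0,1]$ (the main obstacle).} By the Escape Lemma, it suffices to show that $\gamma$ stays in a compact set. Since $f$ is constant along $\gamma$, quadratic growth (Lemma~\ref{lemma:bound-trajectories-QG}) gives
\begin{align*}
  \dist(\gamma(t), \pi(\gamma(t))) \leq \sqrt{2(f(y)-f^*)/\mu},
\end{align*}
and $\pi(\gamma(t)) = c(t)$ lies in the compact set $H(\{\pi(y)\} \times [0,1])$. Completeness of $\calM$ (Hopf--Rinow) then confines $\gamma$ to a compact set. Note that the distance used here, $\dist(\gamma, \pi(\gamma))$, is justified by the bound in Lemma~\ref{lemma:bound-trajectories-QG} on the distance from a point to its flow limit. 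The same bound holds locally uniformly in $y$, so the fundamental theorem of flows for time-dependent fields makes $y \mapsto \gamma_y(1)$ smooth on $\pi^{-1}(U)$. I define $\varphi(y) = \gamma_y(1)$. Then $\pi(\varphi(y)) = \bar x$, so $\varphi(y) \in \calF$, and $f(\varphi(y)) = f(y)$.

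\textbf{Step 4: the inverse.} To invert $\psi = (\pi, \varphi)$, take $(x, z) \in U \times \calF$ and run the same construction backward. That is, solve $\gamma' = V(\gamma, t)$ backward from $\gamma(1) = z$ along the curve $c_x$, which ends at $\bar x$. Concretely, I solve on $t \in [0,1]$ the ODE with field $\D\pi(\gamma)^\dagger[\partial_t H(x,t)]$, now with $x$ a fixed parameter rather than $\pi(\gamma)$. Uniqueness forces $\pi(\gamma(t)) = H(x,t)$ whenever $\pi(z) = H(x,1) = \bar x$. The same compactness argument gives existence, and smooth dependence on $(x, z)$ follows. The resulting $\gamma(0)$ has $\pi(\gamma(0)) = x$. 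By uniqueness of ODE solutions, this map and $\psi$ are mutual inverses. So $\psi$ is a diffeomorphism, and $\pi$ is locally trivial, with local trivializations given by chart balls. Finally, if $S$ is contractible (equivalently, by Proposition~\ref{prop:picontinuous}, if $\calM$ is), the same argument applies with $U = S$, giving a global trivialization.
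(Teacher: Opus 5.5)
There is a genuine gap in Step~2. Your field $V(y,t)=\D\pi(y)^\dagger[\partial_t H(\pi(y),t)]$ is not well defined. The vector $\partial_t H(\pi(y),t)$ is tangent at $H(\pi(y),t)$, whereas $\D\pi(y)^\dagger$ only accepts vectors in $\T_{\pi(y)}S$, and $H(\pi(y),t)\neq\pi(y)$ in general. There is a second problem even in a convex coordinate ball $U$, where tangent spaces can be identified: feeding the \emph{current} base point $\pi(\gamma(t))$ into $H$ lifts the wrong curve. Then $\eta=\pi\circ\gamma$ solves $\eta'=\partial_t H(\eta,t)$, while $c_{\pi(y)}$ solves $c'=\partial_t H(\pi(y),t)$, which is a different ODE. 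For the straight-line contraction $H(x,t)=(1-t)x+t\bar x$ you get $\eta(t)=\bar x+e^{-t}(\pi(y)-\bar x)$. Hence $\pi(\gamma_y(1))\neq\bar x$ and $\varphi(y)\notin\calF$. Your compactness bound in Step~3 and the conclusion $\varphi(y)\in\calF$ both rest on $\pi\circ\gamma=c_{\pi(y)}$, so the construction does not go through as written.

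In Step~4 you fix the second problem by freezing the base point $x$ as a parameter; this is exactly the paper's constant curve $\chi$. The first problem remains. The field $\D\pi(\gamma)^\dagger[\partial_t H(x,t)]$ only makes sense on the set where $\pi(\gamma)=H(x,t)$, so you cannot appeal to uniqueness to show that solutions stay on that set: the argument is circular. The paper resolves this with a smooth transporter $T_{x_2\leftarrow x_1}$ on $S$ satisfying $T_{x\leftarrow x}=\Id$ (Lemma~\ref{lem:transporter}). It then uses the field $W(y,x,t)=\D\pi(y)^\dagger\big[T_{\pi(y)\leftarrow H(x,t)}[\partial_t H(x,t)]\big]$ on $\pi^{-1}(U)\times U\times\reals$, coupled with $\chi'=0$. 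This is a genuine smooth vector field, and $\eta=\pi\circ\gamma$ solves $\eta'=T_{\eta\leftarrow c}[c']$. Since $c$ solves the same ODE, uniqueness yields $\pi\circ\gamma=c$. An equivalent repair is to pose the ODE on the pullback $\{(y,x,t):\pi(y)=H(x,t)\}$, which is a smooth submanifold because $\pi$ is a submersion. The field $(\D\pi(y)^\dagger[\partial_t H(x,t)],0,1)$ is well defined and tangent there. With either repair, your Steps~3 and~4 go through as in the paper. Note that the escape lemma needs compactness inside $\pi^{-1}(U)$, not just in $\calM$. You get this by intersecting your closed ball with the closed set $\pi^{-1}(c([0,1]))\subseteq\pi^{-1}(U)$.
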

\begin{proof}
    The preliminary statements of the theorem follow from Propositions~\ref{prop:Ssmoothpisubmersion} and~\ref{prop:plrestrictedtofiber}.
    See Proposition~\ref{prop:picontinuous} for the claim that $S$ is contractible if and only if $\calM$ is contractible.

    \paragraph{The pseudoinverse of $\D\pi$:}
    Endow $S$ with a Riemannian structure, for example as a Riemannian submanifold of $\calM$. 
    For each point $y \in \calM$, consider the map $\D\pi(y) \colon \T_y \calM \to \T_{\pi(y)} S$.
    Let $\D\pi(y)^* \colon \T_{\pi(y)} S \to \T_y \calM$ denote its adjoint with respect to the Riemannian metrics on $\calM$ and $S$.
    Since $\pi$ is a submersion, $\D\pi(y)$ is surjective for all $y \in \calM$, hence we may define its Moore--Penrose pseudoinverse as
    \begin{align*}
        \D\pi(y)^\dagger = \D\pi(y)^* \circ (\D\pi(y) \circ \D\pi(y)^*)^{-1} \colon \T_{\pi(y)} S \to \T_y \calM.
    \end{align*}
    Notice that this depends smoothly on $y$.

    \paragraph{A smooth collection of paths in $U$:}
    Since $U$ is contractible (Definition~\ref{def:contractible}), it deformation retracts to $\bar x \in U$.
    Specifically, there exists a homotopy $H \colon U \times [0, 1] \to U$ between the identity map on $U$ and the constant map to $\{\bar x\}$:
    \begin{align*}
        H(x, 0) = x && \textrm{ and } && H(x, 1) = \bar x && \textrm{ for all } x \in U.
    \end{align*}
    We can choose $H$ to be smooth because $U$ is smooth as an open submanifold of $S$.
    This is because $H$ is a homotopy from the identity map $\mathrm{id} \colon U \to U$, $\mathrm{id}(x) = x$, to a constant map $c \colon U \to U$, $c(x) = \bar x$, and if two smooth maps are homotopic then they are \emph{smoothly} homotopic by Whitney's approximation theorem~\citep[Thm.~6.29]{lee2012smoothmanifolds}.
    Moreover, that theorem's proof (see reference) provides the existence of a smooth map $H \colon U \times \reals \to U$ with the properties stated above.
    Choose that $H$ going forward.

    \paragraph{Recalling the proof intuition:}
    Let $\calM' = \pi^{-1}(U)$: this is smooth as an open submanifold of $\calM$.
    We aim to build a diffeomorphism $y \mapsto \psi(y) = (\pi(y), \varphi(y))$ from $\calM'$ to $U \times \calF$, in such a way that $\varphi$ maps each fiber of $\pi$ (in $\calM'$) to the fiber $\calF$.
    To do so, given a point $y \in \calM'$, below, we build a curve $\gamma$ that brings $y = \gamma(0)$ to a point $\gamma(1) \in \calF$.
    This curve is a lift of a corresponding curve $c(t) = H(\pi(y), t)$ on $U$, which brings $c(0) = \pi(y)$ to $c(1) = \bar x$.
    By \emph{lift} we mean that $\pi \circ \gamma = c$.
    Moreover, we aim for a \emph{horizontal} lift in the sense that $\gamma'$ is orthogonal to the fibers of $\pi$.
    The plan is to let $\varphi(y) = \gamma(1)$.
    (See Figure~\ref{fig:PLflowbundle}.)

    \paragraph{A technical departure from that intuition:}
    While we would like to use~\eqref{eq:gammaprimeidea} to define a differential equation in $\gamma$, we cannot yet assume that such a $\gamma$ would indeed satisfy $\pi \circ \gamma  = c$, and so we cannot be certain that $c'(t)$ (a tangent vector at $c(t)$) would indeed be in the domain of $\D\pi(\gamma(t))^\dagger$ (that is, the tangent space at $\pi(\gamma(t))$).

    To make up for this, we invoke the existence of a smooth map
    \begin{align*}
        T \colon \T S \times S \to \T S \colon ((x_1, v), x_2) \mapsto T_{x_2 \leftarrow x_1}(v)
    \end{align*}
    with the following properties (see Appendix~\ref{app:transporter}; we call this a \emph{transporter}):
    \begin{enumerate}
        \item $v \mapsto T_{x_2 \leftarrow x_1}(v)$ is a linear map from $\T_{x_1} S$ to $\T_{x_2} S$ for all $x_1, x_2 \in S$, and
        \item $T_{x \leftarrow x}(v) = v$ for all $(x, v) \in \T S$.
    \end{enumerate}
    We use this smooth map to transport vectors from the tangent space at $c(t)$ to the tangent space at $\pi(\gamma(t))$, in such a way that if these two points turn out to be the same (they will), then the map has no effect.
    
    \paragraph{Setting up an ODE for $\gamma$:}
    With intuition driven by~\eqref{eq:gammaprimeidea} and now equipped with the transporter $T$, let $W \colon \calM' \times U \times \reals \to \T\calM'$ be defined as follows:
    \begin{align}
        W(y, x, t) = \D\pi(y)^\dagger\!\left[  T_{\pi(y) \leftarrow H(x, t)}\!\left[ \ddt H(x, t) \right]\right].
        \label{eq:Wforliftgamma}
    \end{align}
    The map $W$ is smooth because (a) $H$ is smooth, (b) $y \mapsto \D\pi(y)^\dagger$ is smooth, and (c) the transporter $T$ is smooth.
    
    This allows us to consider the following smooth, non-autonomous ODE in the unknown curves $\gamma$ on $\calM'$ and $\chi$ on $U$ (the constant curve $\chi$ is included for technical reasons):
    \begin{align}
        \ddt \gamma(t) = W\!\big(\gamma(t), \chi(t), t\big) && \textrm{ and } && \ddt \chi(t) = 0,
        \label{eq:ODEforliftgamma}
    \end{align}
    with the two following sets of initial conditions (to be considered separately):
    \begin{enumerate}
        \item Either $\gamma(0) = y \in \calM'$ and $\chi(0) = \pi(y)$,
        \item Or $\gamma(1) = z \in \calF$ and $\chi(1) = x \in U$ (with $z$ and $x$ to be specified later).
    \end{enumerate}
    We are mostly interested in what happens for $t$ in the interval $[0, 1]$.
    The first set of initial conditions corresponds to a curve $\gamma$ that starts at $y$ and ends at some point $\gamma(1)$, which we plan to identify with $\varphi(y)$.
    The second set is used later to construct the inverse of the map $\psi = (\pi, \varphi)$.
    In both cases, note that $\chi$ is constant (respectively equal to $\pi(y)$ or $x$ for all $t$).
    Also define the curve
    \begin{align}
        c(t) = H(\chi(t), t)
        \label{eq:cforliftgamma}
    \end{align}
    which starts at $c(0) = \chi(0)$ (that is, respectively $\pi(y)$ or $x$) and ends at $c(1) = \bar x$.

    For either set of initial conditions, the ODE admits a unique smooth solution $(\gamma, \chi)$ defined over a maximal interval of time that is open.
    Since $\chi$ is constant, we can focus on $\gamma$.
    Let us first argue that $\pi \circ \gamma = c$; then we show that $\gamma$ is defined (in particular) over the whole interval $[0, 1]$.

    \paragraph{The curve $\gamma$ is a horizontal lift of $c$:}
    We know $\gamma$ exists on some interval.
    Define $\eta = \pi \circ \gamma$ and compute
    \begin{align*}
        \eta'(t) = \D\pi(\gamma(t))\!\left[\gamma'(t)\right] & = \D\pi(\gamma(t))\!\left[ \D\pi(\gamma(t))^\dagger\!\left[T_{\pi(\gamma(t)) \leftarrow H(\chi(t), t)}\!\left[\ddt H(\chi(t), t)\right]\right]\right] \\
            & = T_{\eta(t) \leftarrow c(t)}\!\left[c'(t)\right],
    \end{align*}
    where
    the simplification occurred because $\D\pi(\gamma(t)) \circ \D\pi(\gamma(t))^\dagger$ is identity. 
    We can view this as an ODE in $\eta$ with the two following sets of initial conditions:
    \begin{enumerate}
        \item Either $\eta(0) = \pi(\gamma(0)) = \pi(y) = c(0)$,
        \item Or $\eta(1) = \pi(\gamma(1)) = \pi(z) = \bar x  = H(\chi(1), 1) = c(1)$.
    \end{enumerate}
    Either way, the solution exists and is unique.
    Of course, we already know $\pi \circ \gamma$ is a solution.
    Moreover, we see that $c$ is a solution as well, because $T_{\eta(t) \leftarrow c(t)}$ is identity if $\eta = c$.
    By uniqueness, we deduce that $\pi \circ \gamma = c$.

    Thus, we have found that $\gamma$ is a lift of $c$.
    Plugging $\pi \circ \gamma = c$ into the ODE~\eqref{eq:ODEforliftgamma} reveals that, for all $t$ in the domain of $\gamma$, we have
    \begin{align*}
        \gamma'(t) = \D\pi(\gamma(t))^\dagger\!\left[c'(t)\right].
    \end{align*}
    Notice also that $\gamma$ is a \emph{horizontal} lift of $c$ in the sense that
    \begin{align*}
        \gamma'(t) \in \im \D\pi(\gamma(t))^\dagger = \left( \ker \D\pi(\gamma(t)) \right)^\perp,
    \end{align*}
    that is, $\gamma'(t)$ is orthogonal to the tangent space of the fiber of $\pi$ passing through $\gamma(t)$.

    \paragraph{The curve $\gamma$ is defined over the whole interval $[0, 1]$:}
    This is the only part of this proof where we use the fact that $\pi$ is the end-point map of the negative gradient flow for $f$, rather than a general surjective smooth submersion.
    For starters, notice that
    \begin{align}
        \ddt f(\gamma(t)) = \innerbig{\grad f(\gamma(t))}{\gamma'(t)}_{\gamma(t)} = 0,
        \label{eq:ddtfgammazero}
    \end{align}
    because $\gamma'(t)$ is orthogonal to the fibers of $\pi$, while $\grad f(\gamma(t))$ is tangent to the fibers of $\pi$.
    Thus, $f(\gamma(t))$ is constant for all $t$ in the domain of $\gamma$: let $\bar f$ denote that constant (equal to $f(\gamma(0)) = f(y)$ or $f(\gamma(1)) = f(z)$, depending on the set of initial conditions).
    Let $\mu > 0$ be the \pl{} constant of $f$ and let $f^* = \inf f$.
    By the quadratic growth property (Lemma~\ref{lemma:bound-trajectories-QG}), with $\dist$ denoting the Riemannian distance on $\calM$ (to be clear, not $\calM'$) we have 
    \begin{align*}
        \bar f - f^* = f(\gamma(t)) - f^* \geq \frac{\mu}{2} \dist\!\big(\gamma(t), \pi(\gamma(t))\big)^2 = \frac{\mu}{2} \dist\!\big(\gamma(t), c(t)\big)^2.
    \end{align*}
    Let $\ell$ denote the length of the curve $c$ over the interval $[0, 1]$ (in the metric of $\calM$).
    Then, $\dist(c(t), c(1)) \leq \ell$ holds for all $t \in [0, 1]$, and it follows that
    \begin{align*}
        \dist\!\big(\gamma(t), \bar x\big) \leq \dist\!\big(\gamma(t), c(t)\big) + \dist\!\big(c(t), c(1)\big) \leq \sqrt{\frac{2 (\bar f - f^*)}{\mu}} + \ell
    \end{align*}
    for all $0 \leq t \leq 1$ in the domain of $\gamma$.
    In other words, $\gamma|_{[0, 1]}$ remains in a closed ball $B$ of finite radius around $\bar x$ (in the metric of $\calM$).
    This is a compact set since $\calM$ is complete.
    We also know from $\pi(\gamma(t)) = c(t)$ that $\gamma|_{[0, 1]}$ stays in $C := \pi^{-1}(c([0, 1]))$.
    This is a closed set of $\calM$ (because $c([0, 1])$ is compact as the continuous image of a compact set, and $\pi$ is continuous so the pre-image of a closed set is closed).
    Also, $C$ is entirely contained in $\calM'$.
    Therefore, $\gamma|_{[0, 1]}$ remains in $B \cap C$, which is a compact set of $\calM$ contained in $\calM'$
    and hence it is compact in $\calM'$. 
    Therefore, the escape lemma~\citep[Lem.~9.19]{lee2012smoothmanifolds} guarantees $\gamma$ is defined over the whole interval $[0, 1]$.\footnote{The escape lemma in that reference is stated for autonomous ODEs. The result extends to non-autonomous ODEs by the standard trick which consists in adding a curve $\tau$ on $\reals$ to the system, with $\tau'(t) = 1$ and $\tau(0) = 0$ or $\tau(1) = 1$. Then, any occurrence of $t$ can be replaced by $\tau(t)$.}

    \paragraph{Defining $\varphi$ and $\psi$:}
    Now, using the first set of initial conditions, we can define $\varphi(y) = \gamma(1)$ as intended.
    Of course, $\gamma(1)$ is in $\calF$ because $\pi(\gamma(1)) = c(1) = \bar x$.
    Also,
    \begin{align*}
        f(\varphi(y)) = f(\gamma(1)) = f(\gamma(0)) = f(y)
    \end{align*}
    owing to~\eqref{eq:ddtfgammazero}.
    By the fundamental theorem of time-dependent flows~\citep[Thm.~9.48]{lee2012smoothmanifolds} applied to~\eqref{eq:ODEforliftgamma}, $\varphi$ is smooth.
    Thus, $\psi = (\pi, \varphi) \colon \calM' \to U \times \calF$ is smooth.

    \paragraph{Showing $\psi$ is a diffeomorphism:}
    Let us build the inverse of $\psi$ and argue it is smooth.
    Intuitively, the idea is to run the ODE~\eqref{eq:ODEforliftgamma} in reverse.

    Precisely, for a given $(x, z)$ in $U \times \calF$, solve~\eqref{eq:ODEforliftgamma} with the \emph{second} set of initial conditions: these fix the curves at $t = 1$ rather than $t = 0$.
    The solution provides a constant curve $\chi(t) = x$ and a curve $\gamma \colon [0, 1] \to \calM'$ such that $\gamma(1) = z$ and
    \begin{align*}
        \pi(\gamma(0)) = c(0) = H(\chi(0), 0) = \chi(0) = \chi(1) = x.
    \end{align*}
    Thus, $\gamma(0)$ belongs to the fiber of $x$.
    Let $\xi \colon U \times \calF \to \calM'$ be defined as $\xi(x, z) = \gamma(0)$.
    This map too is smooth, for the same reason that $\varphi$ is smooth.

    Let us check that $\xi$ is the inverse of $\psi = (\pi, \varphi)$.
    For all $(x, z) \in U \times \calF$, we have
    \begin{align*}
        \pi(\xi(x, z)) = \pi(\gamma(0)) = x.
    \end{align*}
    To see that also $\varphi(\xi(x, z)) = z$, reason as follows.
    Let $\gamma_a \colon [0, 1] \to \calM'$, $\chi_a \colon [0, 1] \to U$ be the solution of~\eqref{eq:ODEforliftgamma} with initial conditions $\gamma_a(1) = z$ and $\chi_a(1) = x$.
    These are such that $\xi(x, z) = \gamma_a(0)$.
    Now let $y = \xi(x, z)$, and
    let $\gamma_b \colon [0, 1] \to \calM'$, $\chi_b \colon [0, 1] \to U$ be the solution of~\eqref{eq:ODEforliftgamma} with initial conditions $\gamma_b(0) = y$ and $\chi_b(0) = \pi(y)$.
    These are such that $\varphi(y) = \gamma_b(1)$.
    Notice that
    \begin{align*}
        \gamma_a(0) = \xi(x, z) = y = \gamma_b(0) && \textrm{ and } && \chi_a(0) = \chi_a(1) = x = \pi(\xi(x, z)) = \pi(y) = \chi_b(0).
    \end{align*}
    Thus, $\gamma_a$ and $\chi_a$ are the same as $\gamma_b$ and $\chi_b$, by uniqueness of solutions for ODEs.
    Consequently,
    \begin{align*}
        \varphi(\xi(x, z)) = \varphi(y) = \gamma_b(1) = \gamma_a(1) = z.
    \end{align*}
    Overall, we have shown that $\psi(\xi(x, z)) = (\pi(\xi(x, z)), \varphi(\xi(x, z))) = (x, z)$ for all $(x, z)$ in $U \times \calF$.
    For the same reason, $\xi(\psi(y)) = y$ for all $y \in \calM'$.
    This concludes the proof that $\psi$ is a diffeomorphism from $\calM'$ to $U \times \calF$, with $\xi$ as its smooth inverse.
\end{proof}


\subsection{Combining the pieces} \label{sec:proofplnonlinlstsq}

We are now ready to prove Theorem~\ref{thm:plnonlinlstsq}.
It is a corollary of the following more general statement, because under the contractibility assumption we can let $U = S$ and note that $\pi^{-1}(U) = \calM$.

\begin{theorem} \label{thm:plnonlinlstsqnocontractibility}
  Let $f \colon \calM \to \reals$ be smooth and globally \pl{}.
  Its set $S$ of critical points is a connected, properly embedded smooth submanifold of $\calM$.

  Let $U$ be a contractible, open subset of $S$.
  There exists a diffeomorphism $\psi \colon \pi^{-1}(U) \to U \times \Rk$ of the form $\psi = (\pi, \varphi)$ such that $f(y) = f^* + \|\varphi(y)\|^2$ for all $y \in \pi^{-1}(U)$, where $f^* = \inf_{y \in \calM} f(y)$.
\end{theorem}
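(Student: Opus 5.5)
The plan is to compose two constructions already available: the $f$-compatible local trivialization of Theorem~\ref{thm:pitrivial} and the single-minimizer normal form of Theorem~\ref{thm:globalsquaredistsinglecppl}, applied to one fiber. The preliminary claims need no new work. $S$ is connected by Proposition~\ref{prop:picontinuous}, and it is a smooth, properly embedded submanifold by Proposition~\ref{prop:Ssmoothpisubmersion}. It remains to build $\psi$.

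If $U$ is empty there is nothing to prove, so fix $\bar x \in U$ and set $\calF = \pi^{-1}(\bar x)$. Theorem~\ref{thm:pitrivial} gives a map $\varphi_1 \colon \pi^{-1}(U) \to \calF$ such that $\psi_1 = (\pi, \varphi_1) \colon \pi^{-1}(U) \to U \times \calF$ is a diffeomorphism and $f(\varphi_1(y)) = f(y)$ for every $y \in \pi^{-1}(U)$. By Proposition~\ref{prop:plrestrictedtofiber}, $\calF$ with its Riemannian submanifold structure is complete, and $f|_{\calF}$ is smooth and globally $\mu$-\pl{} with $\bar x$ as its unique critical point. Moreover $f(\bar x) = f^*$, since $\bar x \in S$. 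Theorem~\ref{thm:globalsquaredistsinglecppl}, applied on the manifold $\calF$, then gives a diffeomorphism $\varphi_2 \colon \calF \to \Rk$ with $f(z) = f^* + \|\varphi_2(z)\|^2$ for all $z \in \calF$. Here $k = \dim \calF = n - \dim S$.

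Set $\varphi = \varphi_2 \circ \varphi_1$ and $\psi = (\pi, \varphi) = (\mathrm{id}_U \times \varphi_2) \circ \psi_1$. This is a composition of diffeomorphisms, hence a diffeomorphism $\pi^{-1}(U) \to U \times \Rk$. For $y \in \pi^{-1}(U)$ we get $f(y) = f(\varphi_1(y)) = f^* + \|\varphi_2(\varphi_1(y))\|^2 = f^* + \|\varphi(y)\|^2$, which is the claim.

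The main obstacle is not in this argument itself but in the two results it invokes: the globally defined horizontal lift in Theorem~\ref{thm:pitrivial}, and the global Morse normal form on the fiber. The one point to check here is that Theorem~\ref{thm:globalsquaredistsinglecppl} really applies on $\calF$. This requires completeness of $\calF$, which follows from proper embedding, and the global \pl{} inequality on $\calF$ with respect to $\inf f|_{\calF} = f^*$. Both are provided by Proposition~\ref{prop:plrestrictedtofiber}.
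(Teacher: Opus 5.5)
Your proposal is correct and follows the same route as the paper. You invoke Theorem~\ref{thm:pitrivial} to get the $f$-preserving trivialization $(\pi,\varphi_1)$ over $U$, apply Theorem~\ref{thm:globalsquaredistsinglecppl} to $f|_{\calF}$ via Proposition~\ref{prop:plrestrictedtofiber}, and compose. Your extra checks (completeness of $\calF$, $f(\bar x)=f^*$, and writing $\psi=(\mathrm{id}_U\times\varphi_2)\circ\psi_1$) are small verifications the paper leaves implicit.
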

\begin{proof}
  The properties of $S$~\eqref{eq:S} follow from Propositions~\ref{prop:picontinuous} and~\ref{prop:Ssmoothpisubmersion}.

  Let $\calM' = \pi^{-1}(U)$.
  Fix $\bar x \in U$ to invoke Theorem~\ref{thm:pitrivial}.
  This yields a diffeomorphism $\tilde \psi = (\pi, \varphi_1) \colon \calM' \to U \times \calF$ with $\calF = \pi^{-1}(\bar x)$ such that $f(y) = f(\varphi_1(y))$ for all $y \in \calM'$.

  The restriction of $f$ to $\calF$ is \pl{} with $\bar x$ as its unique critical point: see Proposition~\ref{prop:plrestrictedtofiber}.
  Notice that $k = \dim \calM' - \dim U = \dim \calM - \dim S = \dim \calF$.
  Thus, applying Theorem~\ref{thm:globalsquaredistsinglecppl} to $f|_{\calF}$ provides a diffeomorphism $\varphi_2 \colon \calF \to \Rk$ such that $f(y) = f^* + \|\varphi_2(y)\|^2$ for all $y \in \calF$.

  Compose these diffeomorphisms to form $\psi = (\pi, \varphi_2 \circ \varphi_1) \colon \calM' \to U \times \Rk$.
  This is indeed an appropriate diffeomorphism
  because
  \begin{align*}
    f(y) = f(\varphi_1(y)) = f^* + \|\varphi_2(\varphi_1(y))\|^2
  \end{align*}
  for all $y \in \calM'$, as required.
\end{proof}

Corollary~\ref{cor:deformintoquadform} is now a consequence of the more general result below, because if $S$ is diffeomorphic to $\reals^m$, we may take $U = S$.

\begin{corollary}\label{cor:deformintoquadformMoregeneral}
Let $f \colon \calM \to \reals$ be smooth and globally \pl{}, and
let $U$ be an open subset of $S$ which is diffeomorphic to $\reals^m$.
There exists a diffeomorphism $\xi \colon \pi^{-1}(U)  \to \reals^n$ such that
\[
f(\xi^{-1}(y)) \;=\; f^* + y_{m+1}^2 + \cdots + y_n^2, \quad \quad \forall y \in \reals^n,
\]
\end{corollary}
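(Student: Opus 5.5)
Since $U$ is diffeomorphic to $\reals^m$, it is in particular contractible and open in $S$. So Theorem~\ref{thm:plnonlinlstsqnocontractibility} applies directly with this $U$. It yields a diffeomorphism $\psi = (\pi, \varphi) \colon \pi^{-1}(U) \to U \times \Rk$, with $k = n - m$, such that $f(y) = f^* + \|\varphi(y)\|^2$ for all $y \in \pi^{-1}(U)$.

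Next, fix a diffeomorphism $\sigma \colon U \to \reals^m$ and define
\begin{align*}
  \xi(y) := \big(\sigma(\pi(y)), \varphi(y)\big) \in \reals^m \times \Rk = \reals^n.
\end{align*}
This is the composition of $\psi$ with the product diffeomorphism $\sigma \times \mathrm{id} \colon U \times \Rk \to \reals^m \times \Rk$. Hence $\xi$ is a diffeomorphism from $\pi^{-1}(U)$ onto $\reals^n$.

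Finally, for $y \in \reals^n$, set $x = \xi^{-1}(y)$. Then $\varphi(x) = (y_{m+1}, \ldots, y_n)$, and therefore
\begin{align*}
  f(\xi^{-1}(y)) = f^* + \|\varphi(x)\|^2 = f^* + y_{m+1}^2 + \cdots + y_n^2.
\end{align*}
This mirrors the proof of Corollary~\ref{cor:deformintoquadform}, with $S$ replaced by $U$ and Theorem~\ref{thm:plnonlinlstsq} replaced by its localized version.

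There is no genuine obstacle here; all the substantive work is in Theorem~\ref{thm:plnonlinlstsqnocontractibility}. The only point that needs care is bookkeeping: the domain of $\xi$ is $\pi^{-1}(U)$, which is open in $\calM$, rather than all of $\calM$. Accordingly, the identity is asserted for every $y \in \reals^n$ via $\xi^{-1}$, which is surjective onto $\pi^{-1}(U)$.
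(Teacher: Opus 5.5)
Your proposal is correct and matches the paper's proof: apply Theorem~\ref{thm:plnonlinlstsqnocontractibility} to $U$, which is contractible because it is diffeomorphic to $\reals^m$. Then compose $\psi = (\pi, \varphi)$ with the product diffeomorphism $(w, z) \mapsto (\sigma(w), z)$ to obtain $\xi$, and read off $f(\xi^{-1}(y)) = f^* + \|z\|^2$.
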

\begin{proof}
This follows from Theorem \ref{thm:plnonlinlstsqnocontractibility} by chaining diffeomorphisms.
Let $\sigma \colon U \to \reals^m$ be a diffeomorphism, and define the diffeomorphism
$$\Sigma \colon U \times \reals^k \to \reals^m \times \reals^k = \reals^n, \quad \quad \Sigma(w, z) = (\sigma(w), z).$$
Theorem \ref{thm:plnonlinlstsqnocontractibility} provides a diffeomorphism $\psi = (\pi, \varphi) \colon \pi^{-1}(U) \to U \times \Rk$
such that 
$$\text{$f(\psi^{-1}(w, z)) = f^* + \|z\|^2$ for all $(w, z) \in U \times \Rk$.}$$
Therefore, $\xi = \Sigma \circ \psi$ is a diffeomorphism from $\pi^{-1}(U)$ to $\reals^m \times \reals^k$ satisfying 
$$f(\xi^{-1}(v, z)) = f(\psi^{-1}(\sigma^{-1}(v), z)) = f^* + \|z\|^2, \quad \quad \forall (v, z) \in \reals^m \times \reals^k,$$
as desired.
\end{proof}


\section{Building \pl{} functions} \label{sec:corollaries}

To prove Theorem~\ref{thm:constructingglobalPLcor}, we must explicitly \emph{construct} a globally \pl{} function $f \colon \calM \to \reals$ whose set of minimizers matches a given submanifold $S$.
A key subtlety is that $f$ must be globally \pl{} with respect to the \emph{given} Riemannian metric on $\calM$---the metric cannot be altered (which would make the problem substantially easier).  
We propose such a construction below.

\begin{proof}[Proof of Theorem~\ref{thm:constructingglobalPLcor}]
The given diffeomorphism $\psi \colon \calM \to S \times \Rk$ has two parts: $\psi = (\psi_1, \psi_2)$ with $\psi_1 \colon \calM \to S$ and $\psi_2 \colon \calM \to \Rk$.
Introduce the smooth map $c \colon \reals \times \calM \to \calM$ defined by $c(t, y) = \psi^{-1}(\psi_1(y), t\psi_2(y))$.
For convenience, let $c_y(t) = c(t, y)$: this is a smooth curve on $\calM$ which travels from $c_y(0)$ (a point on $S$) to $c_y(1) = y$.

Define $f(y)$ to be the integral of the squared speed of that curve (in the metric of $\calM$):
\begin{align*}
  f(y) = \int_0^1 \|c_y'(t)\|^2 \dt.
\end{align*}
This function $f \colon \calM \to \reals$ is smooth because $c$ is smooth.
Moreover, $f$ is nonnegative, and $f(y) = 0$ if and only if $y$ is in $S$.
Indeed, $c_y'(t) = \D\psi^{-1}(\psi_1(y), t \psi_2(y))[0, \psi_2(y)]$ and $\psi$ is a diffeomorphism so $\D\psi^{-1}$ is invertible at every point; it follows that $c_y'(t) = 0$ if and only if $\psi_2(y) = 0$, which holds if and only if $y \in S$.
Thus, the set of minimizers of $f$ is exactly $S$.
It remains to show that $f$ is globally \pl{}.
To this end, fix an arbitrary $y \in \calM \backslash S$.

Since $c_y(1) = y$, we have that $c_y'(1)$ is a nonzero tangent vector to $\calM$ at $y$.
Then, we can compute the directional derivative of $f$ at $y$ along $c_y'(1)$ as:
\begin{align*}
  \D f(y)[c_y'(1)] = (f \circ c_y)'(1) = \left. \dds  \int_0^1 \|c_{c_y(s)}'(t)\|^2 \dt \, \right|_{s = 1}.
\end{align*}
The key observation here is this:
\begin{align*}
  c_{c_y(s)}(t) = c(t, c_y(s)) = \psi^{-1}(\psi_1(c_y(s)), t \psi_2(c_y(s))) = \psi^{-1}(\psi_1(y), t s \psi_2(y)) = c_{y}(t s).
\end{align*}
Therefore, we also have
\begin{align*}
  c_{c_y(s)}'(t) = \ddt c_y(t s) = s \cdot c_y'(t s).
\end{align*}
This allows us to continue the computation of the directional derivative: we first substitute the above expression, and then change the integration variable $t$ in favor of $\tau = t s$ (so that $\dtau = s \dt$ and the integration limits become $0$ to $s$):
\begin{align*}
  \D f(y)[c_y'(1)] & = \left. \dds  \int_0^1 s^2 \|c_y'(t s)\|^2 \dt \, \right|_{s = 1} \\
  & = \left. \dds  s \int_0^s \|c_y'(\tau)\|^2 \dtau \, \right|_{s = 1} \\
  & = \int_0^1 \|c_y'(\tau)\|^2 \dtau + \|c_y'(1)\|^2 = f(y) + \|c_y'(1)\|^2.
\end{align*}
To conclude, we use the Cauchy--Schwarz inequality to write
\begin{align*}
  \|\nabla f(y)\| \|c_y'(1)\| \geq \D f(y)[c_y'(1)] = f(y) + \|c_y'(1)\|^2.
\end{align*}
Now divide by $\|c_y'(1)\|$, square, and use the inequality $(a + b)^2 \geq 2ab$ to deduce
\begin{align*}
  \|\nabla f(y)\|^2 \geq \left( \frac{f(y)}{\|c_y'(1)\|} + \|c_y'(1)\| \right)^2 \geq 2 f(y).
\end{align*}
This confirms that $f$ is globally $1$-\pl{}, which concludes the proof.
\end{proof}

\section{Changing the metric to gain geodesic convexity} \label{sec:gconvexproof}

We here prove Theorem~\ref{thm:pl-gconvex} from Section~\ref{sec:gconvexintro}, which states (essentially) that if $f \colon \calM \to \reals$ is a smooth, globally \pl{} function and $\calM$ is contractible then $\calM$ can be given a new, complete Riemannian metric such that $f$ is still globally \pl{} but it is now also geodesically convex.

\begin{proof}[Proof of Theorem~\ref{thm:pl-gconvex}]
The qualities of $S$ are as provided by Theorem~\ref{thm:plnonlinlstsq}.

Let us start with part~\ref{item:pl-gconvex-basic}.
Endow $S$ with the Riemannian metric it inherits from $\calM$: this is a complete metric because $S$ is a closed subset of $\calM$.
Equip $\reals^k$ with the standard Euclidean metric, and give $S \times \reals^k$ the product metric: it is complete.

If $\calM$ is contractible, Theorem~\ref{thm:plnonlinlstsq} provides a diffeomorphism $\psi = (\pi, \varphi) \colon \calM \to S \times \reals^k$ such that (after cosmetic rescaling)
\begin{equation*}
  f(\psi^{-1}(w,z)) \;=\; f^* + \tfrac{1}{2} \|z\|^2,
  \qquad \forall (w, z) \in S \times \reals^k.
\end{equation*}
We claim that $f \circ \psi^{-1}$ is g-convex on $S \times \reals^k$ under the product metric.

Indeed, let $\gamma = (\gamma_1,\gamma_2) \colon [0, 1] \to S \times \reals^k$ be any geodesic segment.
Then $\gamma_1$ and $\gamma_2$ are geodesics in $S$ and $\reals^k$, respectively~\citep[Pb.~5-7]{lee2018riemannian}.
In particular, $\gamma_2$ is affine.
Since $z \mapsto \|z\|^2$ is convex on $\reals^k$, the map $t \mapsto \|\gamma_2(t)\|^2$ is convex on $[0, 1]$.
Thus, $f \circ \psi^{-1} \circ \gamma$ is convex, because
\begin{equation*}
  f(\psi^{-1}(\gamma(t))) = f(\psi^{-1}(\gamma_1(t), \gamma_2(t))) = f^* + \tfrac{1}{2} \|\gamma_2(t)\|^2.
\end{equation*}
Therefore, $f \circ \psi^{-1}$ is g-convex on $S \times \reals^k$.
This function is also globally 1-\pl{} on $S \times \reals^k$ since $\grad(f \circ \psi^{-1})(w, z) = (0, z)$.

Finally, pull back the product metric via $\psi$ to obtain a metric $\inner{\cdot}{\cdot}_2$ on $\calM$.
By design, $f$ is g-convex and 1-\pl{} with respect to $\inner{\cdot}{\cdot}_2$.

For the ``if'' direction of part~\ref{item:pl-gconvex-S-Rm}, reason as above, but call upon Corollary~\ref{cor:deformintoquadform} to provide the diffeomorphism $\xi \colon \calM \to \reals^n$ such that $f \circ \xi^{-1}$ is a convex quadratic, and pull back the Euclidean metric from $\reals^n$ to $\calM$ via $\xi$ to obtain $\inner{\cdot}{\cdot}_2$.
For the ``only if'' direction, observe that if $\calM$ (with its new metric) is isometric to $\Rn$ then there exists a diffeomorphism $\xi \colon \calM \to \Rn$ such that $\inner{\cdot}{\cdot}_2$ is the pullback of the Euclidean metric via $\xi$, and the assumption is that $f \circ \xi^{-1}$ is convex and globally \pl{}.
Its set of minimizers $C \triangleq \xi(S)$ is a smooth embedded submanifold of $\Rn$ that is also a closed and convex set. 
Thus, $C$ is an affine subspace of $\Rn$.\footnote{To see this, fix $x \in C$ and observe for all $y \in C$ that $c(t) = x + t(y-x)$ is a smooth curve on $C$ for $t \in [0, 1]$ (by convexity), hence $c'(0) = y-x$ is in $\T_x C$, that is, $C$ is included in the affine space $A \triangleq x + \T_x C$; moreover, $C$ is closed in $A$ (in subspace topology), and $C$ is open in $A$ (because it is an embedded submanifold of $A$ with $\dim C = \dim A$); therefore, $C = A$.}
It follows that $S$ is diffeomorphic to $\reals^m$.
\end{proof}

\section{A comment about contractibility} \label{sec:commentcontractible}

As usual, let $S$ be the set of minimizers of a smooth function $f \colon \calM \to \reals$ that is globally \pl{}.
If $\calM$ is contractible, then Theorem~\ref{thm:plnonlinlstsq} notably provides that $\calM$ is diffeomorphic to $S \times \Rk$.

One may ask: without assuming that $\calM$ is contractible, may it still be the case that the existence of such a function $f$ implies that $\calM$ is diffeomorphic to $S \times \Rk$?
We discuss here, with a summary in Table~\ref{tab:noncontractibleexamples}.

\begin{table}[t]
\begin{tabular}{@{}lcccccc@{}}
\toprule
                           &         &           & contractible & parallelizable & orientable  & $\calM \cong S \times \Rk$ \\ \midrule
\multirow{2}{*}{Example~\ref{ex:cylinder}} & $\calM$ & cylinder  & \multirow{2}{*}{\usym{2717}}  & \usym{2713}    & \usym{2713} & \multirow{2}{*}{\usym{2713}}                                               \\
                           & $S$     & circle    &   & \usym{2713}    & \usym{2713} &                                                                            \\ \midrule
\multirow{2}{*}{Example~\ref{ex:mobius}} & $\calM$ & M\"obius  & \multirow{2}{*}{\usym{2717}}  & \usym{2717}    & \usym{2717} & \multirow{2}{*}{\usym{2717}}                                               \\
                           & $S$     & circle    &   & \usym{2713}    & \usym{2713} &                                                                            \\ \midrule
\multirow{2}{*}{Example~\ref{ex:TS2}} & $\calM$  & $\T\Stwo$ & \multirow{2}{*}{\usym{2717}}  & \usym{2713}    & \usym{2713} & \multirow{2}{*}{\usym{2717}}                                               \\
                           & $S$     & 2-sphere  &  & \usym{2717}    & \usym{2713} &                                                                            \\ \midrule
\multirow{2}{*}{Example~\ref{ex:SoneTStwo}} & $\calM$  & $\Sone \times \T\Stwo$ & \multirow{2}{*}{\usym{2717}}  & \usym{2713}    & \usym{2713} & \multirow{2}{*}{\usym{2717}}                                               \\
                           & $S$     & $\Sone \times \Stwo$  &  & \usym{2713}    & \usym{2713} &                                                                            \\ \bottomrule
\end{tabular}
\caption{If $\calM$ is contractible, Theorem~\ref{thm:plnonlinlstsq} provides that $\calM$ is diffeomorphic ($\cong$) to $S \times \Rk$, with $S$ the set of minimizers of a smooth, globally \pl{} function. Can the assumption be relaxed? In Section~\ref{sec:commentcontractible}, we provide four examples of a globally \pl{} function $f \colon \calM \to \reals$ with a non-contractible domain, and check whether that conclusion of Theorem~\ref{thm:plnonlinlstsq} holds nonetheless. The first two have the same set of minimizers $S$ (a circle, up to diffeomorphism), yet the outcomes differ. Thus, assumptions on $S$ alone may not distinguish between the two. Assumptions on $\calM$ might, but the last two examples show it is not enough for $\calM$ and $S$ to be parallelizable.}
\label{tab:noncontractibleexamples}
\end{table}

In Example~\ref{ex:cylinder} below, we construct a (not constant) globally \pl{} function on a cylinder, with $S$ diffeomorphic to the circle $\Sone$.
And indeed, the cylinder is not contractible, yet it is diffeomorphic to $\Sone \times \reals$.
More generally, let $f$ be any smooth and globally \pl{} function on the cylinder.
By Theorem~\ref{thm:plnonlinlstsq}, its set of minimizers $S$ must be a smooth, properly embedded, connected submanifold of the cylinder.
A priori, it can have dimension 0, 1 or 2.
Dimension 2 forces $S$ to be the whole cylinder, in which case we do have a diffeomorphism for trivial reasons.
Dimension 0 is excluded because $S$ would then have to be a point; in particular, $S$ would be contractible, which would imply that the cylinder is contractible, but it is not.
This leaves dimension 1, that is, $S$ must be a smooth curve embedded on the cylinder.
From the classification of 1-manifolds~\citep[Pb.~15-13]{lee2012smoothmanifolds}, it follows that $S$ is diffeomorphic to $\Sone$ or to $\reals$. 
The latter is contractible, hence excluded for the same reason as the point.
It follows that $S$ is diffeomorphic to $\Sone$ and, as stated earlier, the cylinder is diffeomorphic to $\Sone \times \reals$.

In light of this first example, we refine the question as follows: can the contractibility assumption on $\calM$ be relaxed in a way that the cylinder case described above would be included as well?

This possibility is limited by Example~\ref{ex:mobius}.
There, we construct a smooth, globally \pl{} function on the M\"obius band in such a way that the set of minimizers is also diffeomorphic to $\Sone$.
Yet, famously, the M\"obius band is \emph{not} diffeomorphic to $\Sone \times \reals$.

Considering both of those examples, we find that their solution sets $S$ are diffeomorphic (both are circles), yet they yield different conclusions as to the existence of a diffeomorphism from $\calM$ to $S \times \Rk$.
It follows that if we were to replace the contractibility assumption on $\calM$ by any other assumption on $S$ (at least, one that is invariant under diffeomorphism) then we would be unable to distinguish between the first two examples.

Since $\calM$ and $S$ are homotopy equivalent (Proposition~\ref{prop:picontinuous}), this further implies that any assumption on $\calM$ that is a homotopy invariant would be unable to correctly allow for the cylinder while also correctly excluding the M\"obius band.

Thus, we should entertain relaxations of contractibility that are not homotopy invariants.
Further scrutiny of the two examples above suggest that we consider whether $\calM$ is \emph{parallelizable} or \emph{orientable}.
The following implications are classical:
\begin{align*}
  \textrm{contractible} && \implies && \textrm{parallelizable} && \implies && \textrm{orientable}.
\end{align*}
(The first implication holds because ``parallelizable'' means the tangent bundle is trivial, and as noted earlier any vector bundle over a contractible base space is trivial; the second implication is stated in~\citep[Prop.~15.17]{lee2012smoothmanifolds} together with definitions of both concepts.)

This direction too is unfruitful.
Example~\ref{ex:TS2} defines a globally \pl{} function on the tangent bundle of the 2-sphere $\Stwo$ (that is, $\calM = \T\Stwo$) with set of minimizers $S$ diffeomorphic to $\Stwo$.
In this case, $\calM$ is not contractible, but it is parallelizable~\citep[Thm.~2.5, Thm.~3.2]{fodor2019parallelizability}.\footnote{See also \url{https://mathoverflow.net/questions/500443}.}
In contrast, $\Stwo$ itself is not parallelizable, and indeed its tangent bundle $\calM$ is not diffeomorphic to $\Stwo \times \reals^2$.
Thus, while $\calM$ is parallelizable, it is not diffeomorphic to $S \times \reals^2$.

Looking at the first three rows in Table~\ref{tab:noncontractibleexamples}, one might then hypothesize that perhaps it is enough for \emph{both} $\calM$ and $S$ to be parallelizable.
However, this too is insufficient as per Example~\ref{ex:SoneTStwo}.

\begin{example}[Cylinder over circle] \label{ex:cylinder}
  Let $\calM = \{x \in \reals^3 : x_1^2 + x_2^2 = 1\}$ be the cylinder as a Riemannian submanifold of $\reals^3$.
  The function $f(x) = x_3^2$ is globally 2-\pl{} since $\grad f(x) = (0, 0, 2x_3)$ and $\|\grad f(x)\|^2 = 4x_3^2 = 4f(x)$.
  The solution set $S = \{(x_1, x_2, 0) \in \calM\}$ is a circle, which is not contractible.
  Yet, the diffeomorphism $\psi \colon \calM \to S \times \reals \colon x \mapsto ((x_1, x_2, 0), x_3)$ is compatible with (what would be) the conclusions of Theorem~\ref{thm:plnonlinlstsq}.
\end{example}

\begin{example}[M\"obius over circle] \label{ex:mobius}
  Let $\calM = \reals^2 / \mathbb{Z}$ be the \emph{open M\"obius band}, that is, the quotient space where $\mathbb{Z}$ acts on $\reals^2$ by $n \cdot x = (x_1 + n, (-1)^n x_2)$.
  Give $\calM$ the smooth Riemannian manifold structure such that the quotient map $q \colon \reals^2 \to \calM$ is a normal Riemannian covering~\citep[Prop.~2.32, Ex.~2.35]{lee2018riemannian}. 
  In particular, $q$ is a local diffeomorphism~\citep[Prop.~4.33]{lee2012smoothmanifolds}
  and the Euclidean metric on $\reals^2$ is the pullback of the metric on $\calM$ through $q$, that is, for all $u, v \in \reals^2$ (thought of as tangent vectors to $\reals^2$ at $x$), we have
  \begin{align*}
    u^\top v = \inner{u}{v}_x^{\reals^2} = \inner{\D q(x)[u]}{\D q(x)[v]}_{q(x)}^{\calM}.
  \end{align*}
  Note that $\calM$ is non-empty,
  connected 
  and complete, 
  but it is famously not orientable~\citep[Ex.~10.3, Ex.~15.38]{lee2012smoothmanifolds}.

  Let $g \colon \reals^2 \to \reals \colon x \mapsto g(x) = x_2^2$.
  This function is invariant on the orbits of $\mathbb{Z}$, hence it descends to a well-defined smooth function $f \colon \calM \to \reals$ such that $g = f \circ q$~\citep[Thm.~4.29]{lee2012smoothmanifolds}.

  The minimal value of $f$ is zero, and the set of minimizers is $S = \{q(x_1, 0) \in \calM \colon x_1 \in \reals \}$.
  This is diffeomorphic to the circle $\Sone$~\citep[Ex.~10.3]{lee2012smoothmanifolds}. 

  One can check that the gradient of $f$ satisfies $\grad f(q(x)) = \D q(x)[\grad g(x)]$.
  For example, proceed by identification in the identity below which holds for all $u \in \reals^2$:
  \begin{multline*}
    \inner{\D q(x)[u]}{\D q(x)[\grad g(x)]}_{q(x)}^\calM = \inner{u}{\grad g(x)}_x^{\reals^2} = \D g(x)[u] \\ = \D f(q(x))[\D q(x)[u]] = \inner{\D q(x)[u]}{\grad f(q(x))}_{q(x)}^\calM.
  \end{multline*}
  In particular, from $\grad g(x) = (0, 2x_2)$ it follows that
  \begin{align*}
    \|\grad f(q(x))\|_{q(x)}^2 = \|\grad g(x)\|^2 = 4x_2^2 = 4 f(q(x))
  \end{align*}
  and hence $f$ is globally 2-\pl{} on $\calM$.

  Yet, the conclusions of Theorem~\ref{thm:plnonlinlstsq} could not possibly hold.
  Indeed, if they did, then there would exist a diffeomorphism from $\calM$ (the M\"obius band) to the product space $\Sone \times \reals$ (a cylinder).
  Yet, the latter is orientable while the former is not.
\end{example}

\begin{example}[Tangent bundle over sphere] \label{ex:TS2}
  Let $\calM = \T\Stwo = \{ (x, v) \in \reals^3 \times \reals^3 : x^\top x = 1 \textrm{ and } x^\top v = 0 \}$.
  This 4-dimensional manifold is the tangent bundle of the sphere $\Stwo$ in $\reals^3$: it is orientable and parallelizable~\citep[Thm.~2.5, Thm.~3.2]{fodor2019parallelizability} but not contractible. 
  Endow $\calM$ with the Riemannian submanifold metric $\inner{(\dot x, \dot v)}{(\ddot x, \ddot v)}_{(x, v)} = \dot x^\top \ddot x + \dot v^\top \ddot v$.
  (The example also works with the Sasaki metric, see below.) 
  Notice that $\calM$ is indeed connected and complete~\citep[13-18(b)]{lee2012smoothmanifolds}. 

  Let $f \colon \calM \to \reals$ be defined by $f(x, v) = \frac{1}{2} v^\top v$.
  This is clearly smooth ($C^\infty$).
  Its set of minimizers $S = \{ (x, 0) : x^\top x = 1 \}$ is diffeomorphic to the sphere $\Stwo$ (also orientable but neither parallelizable nor contractible).
  Moreover, the gradient of $f$ on $\calM$ is $\nabla f(x, v) = (0, v)$ because $\D f(x, v)[\dot x, \dot v] = v^\top \dot v = \inner{(\dot x, \dot v)}{(0, v)}_{(x, v)}$ for all $(\dot x, \dot v)$ in the tangent space to $\calM$ at $(x, v)$.
  It follows that
  \begin{align*}
    \|\nabla f(x, v)\|_{(x, v)}^2 = \inner{(0, v)}{(0, v)}_{(x, v)} = v^\top v = 2f(x, v),
  \end{align*}
  hence $f$ is globally 1-\pl{}.

  If the contractibility assumption could be removed in Theorem~\ref{thm:plnonlinlstsq}, then we would obtain here a diffeomorphism from $\calM$ to $S \times \reals^2$.
  Yet, that is impossible because $\calM$ (the tangent bundle of $\Stwo$) is not even homeomorphic to $\Stwo \times \reals^2$.\footnote{See for example \href{https://mathoverflow.net/a/209205/100537}{mathoverflow.net/a/209205/100537}.}
\end{example}

\begin{example} \label{ex:SoneTStwo}
  Modifying the previous example, let $\calM = \Sone \times \T\Stwo$ (with the Riemannian submanifold metric).
  The smooth, globally \pl{} function $f(x, (y, v)) = \frac{1}{2}v^\top v$ on $\calM$ has a set of minimizers $S = \{ (x, (y, 0)) \in \calM \}$ which is diffeomorphic to $\Sone \times \Stwo$.
  Notice that $\calM$ is parallelizable (as it is a product of two parallelizable manifolds), and likewise, $S$ is parallelizable (because it is a product of spheres, one of which has odd dimension~\citep[Thm.~XII]{kervaire1956thesis}).
  However, $\calM$ is not homeomorphic to $S \times \reals^2$.
  One way to verify this is to define a topological property, then to check that it is invariant under homeomophism, and show that $\calM$ has that property whereas $S \times \reals^2$ does not.
  Explicitly, the property to consider for a topological space $Z$ is as follows: For all compact $K \subseteq Z$, there exists a compact $K' \subseteq Z$ such that (a) $K \subseteq K'$,  (b) $Z \backslash K'$ is path-connected, and (c) the fundamental group of $Z \backslash K'$ does not contain a subgroup isomorphic to $\mathbb{Z} \times \mathbb{Z}$.
\end{example}

Examples~\ref{ex:cylinder} and~\ref{ex:TS2} generalize as follows.
Let $\calN$ be any (complete and connected) Riemannian manifold.
Let $\calM = \T\calN = \{(x, v) : x \in \calN \textrm{ and } v \in \T_x\calN \}$ be the tangent bundle of $\calN$, endowed with the Sasaki metric so that it is itself a (complete and connected) Riemannian manifold. 
Consider the smooth function $f \colon \calM \to \reals$ defined by
$$
  f(x, v) = \frac{1}{2} \|v\|_x^2.
$$
Its minimal value is zero, attained exactly on the so-called zero section $\{ (x, 0) : x \in \calN \}$, which is diffeomorphic to $\calN$.
Every tangent vector to $\T\calN$ at $(x, v)$ can be realized as the initial velocity of a smooth curve $c(t) = (x(t), v(t))$ on $\T\calN$ with $c(0) = (x, v)$.
Then,
$$
  \D f(x, v)[c'(0)] = (f \circ c)'(0) = \frac{1}{2} \left.\ddt \|v(t)\|_{x(t)}^2 \right|_{t=0} = \innerBig{v(0)}{\Ddt v(0)}_{x(0)} = \inner{(0, v)}{c'(0)}_{(x, v)},
$$
where $\Ddt$ denotes the covariant derivative on $\calN$, and in the last step we used the definition of the Sasaki metric~\citep[eq.~(1.1)]{musso1988tangentbundle}.
Thus, $\grad f(x, v) = (0, v)$ and $\|\grad f(x, v)\|_{(x, v)}^2 = \|v\|_x^2 = 2f(x, v)$ so that $f$ is globally 1-\pl{} with $S$ diffeomorphic to $\calN$.


%
%
%
%

\section{Perspectives} \label{sec:conclusions}

We conclude with a list of open questions.

\begin{itemize}

\item \textbf{Beyond the global \pl{} assumption on $f$.}
The global \pl{} condition is a convenient structural hypothesis, yet some of
our arguments rely only on weaker ingredients.
For instance, Theorem~\ref{thm:globalsquaredistsinglecppl} ultimately uses
coercivity together with the existence of a unique, nondegenerate critical point.
More generally, can Theorem~\ref{thm:plnonlinlstsq} be extended to functions
whose critical set is a Morse--Bott manifold of global minimizers, possibly
under uniform curvature or coercivity assumptions along normal directions?
What are the minimal assumptions that still yield comparable global geometric
conclusions?

\item \textbf{Beyond the contractibility assumption on $\calM$.}
Several of our strongest results require $\calM$ to be contractible, and
Section~\ref{sec:commentcontractible} shows that natural weakenings of this
assumption are insufficient.
Under what broader geometric or topological conditions on $\calM$ can similar
results still be obtained?

\item \textbf{Finite regularity.}
Our results assume $f$ is $C^\infty$.
To what extent do the conclusions persist under finite regularity $C^p$?
While $C^1$ regularity is insufficient in general, it is natural to ask whether sufficiently high regularity (for instance $p \ge 2$ or $p \geq 3$) already guarantees the same structural conclusions.

\item \textbf{Quantitative control on $\psi$.}
Theorem~\ref{thm:plnonlinlstsq} constructs a global diffeomorphism $\psi$
which reveals the nonlinear least-squares nature of $f$.
What sort of additional regularity assumptions on $f$ would allow for quantitative control of $\psi$?
For example, if the gradient of $f$ is $L$-Lipschitz continuous around $S$, then for $x \in S$ it is easy to see that $\D\psi(x)$ has $m$ singular values equal to 1 (due to $\D\pi(x)$ being identity on $\T_x S$) and $k$ singular values in the interval $[\sqrt{\mu/2}, \sqrt{L/2}]$ due to $\D\varphi(x)$ and the equality $\nabla^2 f(x) = 2 \, \D\varphi(x)^* \circ \D\varphi(x)$.
How can we assert control on $\psi$ away from $S$?
Such information could have implications for the analysis of optimization methods.


\item \textbf{Global normal forms beyond positive curvature.}
Theorem~\ref{thm:globalsquaredistsinglecppl} can be viewed as a global analogue
of the Morse lemma when the Hessian at the unique critical point is positive
definite.
The classical Morse lemma applies to nondegenerate critical points of arbitrary
signature (e.g., nondegenerate saddle points).
Under what global assumptions can one obtain analogous global descriptions for
functions exhibiting saddle structure?



\item \textbf{Characterization of admissible minimizer sets, including their embedding.}
If $\calM = \Rn$, Corollary~\ref{cor:equivonS} shows that a smooth manifold is diffeomorphic to the
minimizer set of a smooth, globally \pl{} function if and only if it is contractible.
This is an \emph{intrinsic} topological condition.
It does not address how such a manifold may be \emph{embedded}
in $\calM$.
What additional topological conditions on an embedding
$S\subseteq\calM$ ensure---or obstruct---the existence of a smooth, globally \pl{}
function $f \colon \calM \to \reals$ having $S$ as its minimizer set?
We expand briefly on this question in Appendix~\ref{app:knottheory}, in relation to knot theory.
\end{itemize}

\section*{Acknowledgments}

We thank Andreea-Alexandra Mu\c{s}at, Moishe Kohan, Jaap Eldering, Matthew Kvalheim, Colin Guillarmou and Kenneth Falconer for helpful discussions.

\section*{Funding}

This work was supported by the Swiss State Secretariat for Education, Research and Innovation (SERI) under contract number MB22.00027.

\clearpage

\appendix

\section{Morse lemma at a local minimizer} \label{sec:morselemma}

Lemma~\ref{lem:morselemmaposdef} below is the \emph{Morse Lemma} specialized to local minimizers.
For completeness, we include a simple proof that follows the approach of \citet[\S{C.6}]{hormander2007analysis}.
See also~\citep[\S6.1]{hirsch1976differential} or~\citep[Lem.~2.2]{milnor1964difftopochapter} for the statement at general nondegenerate critical points, and see~\citep[Lem.~3.51]{banyaga2004morsehomology} for a Morse--Bott extension.
If $f$ is not $C^\infty$ smooth, the argument below loses two orders of regularity; see~\citep{ostrowski1968morsekuiper} for a proof that loses only one order of regularity.

\begin{lemma}[Morse Lemma at a local minimizer] \label{lem:morselemmaposdef}
    Let $f \colon \calM \to \reals$ be smooth on a Riemannian manifold $\calM$ of dimension $n$ (not necessarily connected or complete).
    Let $\bar x$ be a critical point of $f$ at which the Hessian of $f$ is positive definite.
    Then there exist an $\epsilon > 0$ and a map $\varphi \colon B_\epsilon \to \Rn$ with $B_\epsilon = \{ x \in \calM : \dist(x, \bar x) < \epsilon \}$ such that:
    \begin{enumerate}
        \item $\varphi$ is a diffeomorphism from $B_\epsilon$ to its image with $\varphi(\bar x) = 0$; and 
        \item $f(x) = f(\bar x) + \|\varphi(x)\|^2$ for all $x \in B_\epsilon$.
    \end{enumerate}
    In particular, for all $r > 0$ sufficiently small, $\varphi$ maps the (local) sublevel set $\{ x \in B_\epsilon : f(x) < f(\bar x) + r^2 \}$ diffeomorphically to an open Euclidean ball of radius $r$.
\end{lemma}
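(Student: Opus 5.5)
We will follow the Hörmander-style approach: reduce to a Euclidean chart, write $f$ as a quadratic form with variable coefficients via Taylor's formula with integral remainder, and then take a smooth square root of that coefficient matrix.

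\textbf{Step 1 (chart).} Fix normal coordinates around $\bar x$: for $\epsilon_0$ below the injectivity radius at $\bar x$, the map $\Log_{\bar x}$ composed with an orthonormal basis of $\T_{\bar x}\calM$ is a diffeomorphism from $B_{\epsilon_0}$ onto the Euclidean ball of radius $\epsilon_0$, sending $\bar x$ to $0$. Write $g(u) = f(\Exp_{\bar x}(u)) - f(\bar x)$ on that ball. Then $g(0)=0$ and $\grad g(0)=0$, and $\Hess g(0)$ equals the Riemannian Hessian of $f$ at $\bar x$ (at a critical point the Hessian is chart independent), so $\Hess g(0) \succ 0$.

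\textbf{Step 2 (quadratic form with variable coefficients).} Apply Taylor's formula with integral remainder twice, using $g(0)=0$ and $\grad g(0)=0$:
\begin{align*}
  g(u) = u^\top A(u) u, && A(u) = \int_0^1 (1-t)\, \Hess g(tu) \dt .
\end{align*}
The matrix $A(u)$ is symmetric, smooth in $u$ (differentiate under the integral), and satisfies $A(0) = \tfrac12 \Hess g(0) \succ 0$. By continuity, $A(u)$ is positive definite for $\|u\|$ small. The map $P \mapsto P^{1/2}$ is smooth, in fact analytic, on the open cone of positive definite matrices; this can be seen from the inverse function theorem applied to $Q \mapsto Q^2$, whose differential $\dot Q \mapsto Q\dot Q + \dot Q Q$ is invertible when $Q \succ 0$. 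Hence $R(u) := A(u)^{1/2}$ is smooth near $0$. Define $\tilde\varphi(u) = R(u)u$. Then $\|\tilde\varphi(u)\|^2 = u^\top A(u)u = g(u)$. The choice of the symmetric square root, rather than a Cholesky factor, is not essential; any smooth factor $R$ with $R^\top R = A$ works.

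\textbf{Step 3 (local diffeomorphism).} We have $\D\tilde\varphi(0) = R(0)$, which is invertible. The inverse function theorem gives an $\epsilon \in (0,\epsilon_0)$ such that $\tilde\varphi$ is a diffeomorphism from the ball of radius $\epsilon$ onto its (open) image. Setting $\varphi = \tilde\varphi \circ (\text{normal coordinates})$ on $B_\epsilon$ gives items 1 and 2. The only delicate point is that the inverse function theorem provides some neighborhood, not necessarily a geodesic ball. This is handled by first obtaining such a neighborhood and then shrinking to a geodesic ball of radius $\epsilon$ inside it; injectivity and the local diffeomorphism property are preserved under restriction.

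\textbf{Step 4 (sublevel sets).} The image $\varphi(B_\epsilon)$ is an open set containing $0$, so it contains a Euclidean ball $\{\|w\|<r_0\}$. For $r \le r_0$, item 2 shows that $\varphi$ maps $\{x\in B_\epsilon : f(x) < f(\bar x)+r^2\}$ bijectively onto $\varphi(B_\epsilon)\cap\{\|w\|<r\}$, which equals the ball of radius $r$. Since $\varphi$ is a diffeomorphism onto its image, its restriction to this open subset is a diffeomorphism onto the ball.

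The main obstacle is the smoothness of $u \mapsto A(u)^{1/2}$ in Step 2; everything else is bookkeeping. We will dispatch it via the inverse function theorem on $Q \mapsto Q^2$ restricted to symmetric positive definite matrices, as described above.
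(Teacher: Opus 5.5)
Your proposal is correct and follows the paper's proof essentially step for step: normal coordinates, Taylor's formula with integral remainder to write $g(u)=u^\top A(u)u$ with $A(0)=\tfrac12\nabla^2 g(0)\succ 0$, a smooth factor $R(u)$ (square root or Cholesky) with $R^\top R = A$, then $\tilde\varphi(u)=R(u)u$ and the inverse function theorem. Your single integral $\int_0^1(1-t)\nabla^2 g(tu)\,\mathrm{d}t$ equals the paper's double integral by Fubini, and you also justify the smoothness of the matrix square root and prove the sublevel-set claim, both of which the paper leaves implicit.
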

\begin{proof}
    Select a normal coordinates chart around $\bar x$, that is, some $\bar \epsilon > 0$ and diffeomorphism $\phi \colon B_{\bar \epsilon} \to V$ with $V = \phi(B_{\bar \epsilon}) \subset \Rn$ such that $t \mapsto \phi^{-1}(t \phi(x))$ is the minimizing geodesic from $\bar x$ (at $t= 0$) to $x$ (at $t = 1$).
    In particular, $\dist(x, \bar x) = \|\phi(x)\|$, so that $\phi(\bar x) = 0$ and also $V = B_{\bar \epsilon}^n := \{ v \in \Rn : \|v\| < \bar\epsilon \}$.

    Passing to those coordinates,
    let $\tilde f = f \circ \phi^{-1} \colon B_{\bar \epsilon}^n \to \reals$.
    Deduce from $\grad f(\bar x) = 0$ and $\Hess f(\bar x) \succ 0$ that $\tilde f(0) = f(\bar x)$, $\grad \tilde f(0) = 0$ and $\Hess \tilde f(0) \succ 0$.
    Fix $v \in B_{\bar \epsilon}^n$ and let $g(t) = \tilde f(tv)$.
    Since $g \colon [0, 1] \to \reals$ is smooth, we know $g(1) = g(0) + g'(0) + \int_0^1 \int_0^t g''(s) \, \ds \dt$ where $g'(t) = v^\top \grad \tilde f(tv)$ and $g''(t) = v^\top \Hess \tilde f(tv)[v]$.
    Thus,
    \begin{align*}
        \tilde f(v) = f(\bar x) + v^\top H(v)[v] && \textrm{ with } && H(v) = \int_0^1 \int_0^t \Hess \tilde f(sv) \, \ds \dt.
    \end{align*}
    In particular, $H(0) = \frac{1}{2} \Hess \tilde f(0)$ is positive definite.
    By continuity of the Hessian of $\tilde f$, there exists $\epsilon \in (0, \bar\epsilon]$ such that $\Hess \tilde f(v) \succ 0$ for all $v \in B_{\epsilon}^n$.
    Thus, $H(v)$ also is positive definite for all $v \in B_\epsilon^n$.
    Taking matrix square roots or via Cholesky decomposition, it follows that there exists a smooth map $R \colon B_\epsilon^n \to \Rnn$ such that
    \begin{align*}
        H(v) = R(v)^\top R(v) && \textrm{ for all } && v \in B_\epsilon^n,
    \end{align*}
    and of course each $R(v)$ is invertible.

    Let $\tilde \varphi(v) = R(v) v$, defined from $B_\epsilon^n$ to $\Rn$.
    Therefore,
    $$\tilde f(v) = f(\bar x) + v^\top H(v)[v] = f(\bar x) + v^\top R(v)^\top R(v)[v] = f(\bar x) + \|\tilde \varphi(v)\|^2.$$
    The differential $\D\tilde\varphi(v)[\dot v] = \D R(v)[\dot v] v + R(v) \dot v$ simplifies at $v = 0$ to $\D\tilde\varphi(0) = R(0)$, which is invertible.
    Thus, by the inverse function theorem, we may reduce $\epsilon > 0$ if need be so that $\tilde\varphi$ is a diffeomorphism from $B_\epsilon^n$ to its image.

    To conclude, we have that $\varphi = \tilde \varphi \circ \phi|_{B_{\epsilon}}$ is indeed a diffeomorphism from $B_\epsilon$ to its image, and that $f(x) = \tilde f(\phi(x)) = f(\bar x) + \|\tilde \varphi(\phi(x))\|^2 = f(\bar x) + \|\varphi(x)\|^2$, as announced.
\end{proof}

\section{Extension of the Morse lemma diffeomorphism}\label{app:extension-morse}


Lemma~\ref{lem:localsquaredistsinglecp} provides a \emph{global} diffeomorphism of $\calM$ with the same \emph{local} effect as the Morse lemma above (Lemma~\ref{lem:morselemmaposdef}).

\begin{proof}[Proof of Lemma~\ref{lem:localsquaredistsinglecp}]
  Let $B^n$ denote the open unit ball in $\Rn$.
  Select two diffeomorphisms from $B^n$ to neighborhoods of $x^*$ in $\calM$, as follows:
  \begin{enumerate}
    \item Using the Morse Lemma (Lemma~\ref{lem:morselemmaposdef} and appropriate rescaling), select $\epsilon, r > 0$ (both less than the injectivity radius of $\calM$ at $x^*$) and a diffeomorphism $\varphi_a^{-1} \colon B^n \to U_a = \{ x \in \calM : \dist(x, x^*) < \epsilon \textrm{ and } f(x) < f(x^*) + r^2 \}$ such that $f(\varphi_a^{-1}(v)) = f(x^*) + r^2 \|v\|^2$; and

    \item Using a normal coordinates chart around $x^*$ and appropriate rescaling, select a diffeomorphism $\varphi_b^{-1} \colon B^n \to U_b = \{ x \in \calM : \dist(x, x^*) < r \}$ such that $\dist(\varphi_b^{-1}(v), x^*) = r\|v\|$ (same $r$ as above). 
  \end{enumerate}

  We aim to apply the Palais--Cerf theorem to extend the diffeomorphism $\varphi_a^{-1} \circ \varphi_b \colon U_b \to U_a$ to a diffeomorphism $\psi \colon \calM \to \calM$ such that $\psi \circ \varphi_b^{-1} = \varphi_a^{-1}$.
  See~\citep[Thm.~B]{palais1960extending}, and also \citep[Lem.~2]{milnor1964difftopochapter}, \citep[Ch.~8, Thm.~3.1]{hirsch1976differential} and \citep{goldstein2025gluingdiffeos}, where the latter handles $C^p$ regularity with $p < \infty$ explicitly.

  If $\calM$ is not orientable, then that theorem applies directly.
  If $\calM$ is orientable, then we need to ensure that the diffeomorphism $\varphi_a^{-1} \circ \varphi_b$ preserves orientation,\footnote{If $\calM$ is oriented, then the sign of the determinant of the differential of $\psi$ is well defined throughout $\calM$, and it must be positive because the diffeomorphism $\psi$ produced by the Palais--Cerf theorem is identity outside a compact set, and the determinant cannot be zero anywhere for a diffeomorphism so it cannot change sign. If $\calM$ is not orientable, there is no such obstruction.}
  that is, we must check that the determinant of the differential of $\varphi_a^{-1} \circ \varphi_b$ is positive at $x^*$ (this determinant is well defined because $(\varphi_a^{-1} \circ \varphi_b)(x^*) = x^*$ so we can express the differential as an $n \times n$ matrix with respect to an arbitrary basis of $\T_{x^*}\calM$).
  If it is not, then simply redefine $\varphi_b$ by flipping the sign of one of the coordinates: this does not change the properties we had required for $\varphi_b$, and now the Palais--Cerf theorem applies.


  In all cases, Palais--Cerf provides a diffeomorphism $\psi \colon \calM \to \calM$ such that $\psi \circ \varphi_b^{-1} = \varphi_a^{-1}$.
  For $x$ such that $\dist(x, x^*) < r$, it follows from the properties of $\psi$, $\varphi_a$ and $\varphi_b$ that
  \begin{align*}
    (f \circ \psi)(x) = f(\varphi_a^{-1}(\varphi_b(x))) = f(x^*) + r^2 \|\varphi_b(x)\|^2 = f(x^*) + \dist(x, x^*)^2.
  \end{align*}
  By continuity, this extends to the non-strict inequality $\dist(x, x^*) \leq r$.
\end{proof}


\section{Rescaled gradient flow}\label{app:rescaledflow}

Lemma~\ref{lem:GFrescaledoncoerciveinvexM} provides simple statements about normalized gradient flow.
Versions of this lemma appear in various places (e.g., in passing in~\citep[Thm.~3.1]{milnor1963morse}).
We include the details here so we have a specific version we can rely on.

\begin{proof}[Proof of Lemma~\ref{lem:GFrescaledoncoerciveinvexM}]
    Fix $x \neq x^*$.
    By design, for $t$ in the domain of definition of $t \mapsto \nu(x, t)$ we have
    \begin{align*}
        \ddt f(\nu(x, t)) = \innerBig{\grad f(\nu(x, t))}{\ddt \nu(x, t)}_{\nu(x, t)} = 1.
    \end{align*}
    Thus, if the trajectory is defined from time 0 up to (or down to) $t$, then $f(\nu(x, t)) = f(\nu(x, 0)) + t = f(x) + t$.
    Note from Lemma~\ref{lem:coerciveuniquecpglobalmin} that $f$ is nonnegative.

    To see that the flow is defined on the stated interval, pick arbitrary function values $0 < a < b < \infty$ and a corresponding smooth bump function $\beta \colon \calM \to \reals$ such that $\beta(x) = 1$ if $a \leq f(x) \leq b$, and $\beta(x) = 0$ if $f(x) \leq a/2$ or $f(x) \geq 2b$~\citep[Prop.~2.25]{lee2012smoothmanifolds}.
    Recall the definition $W(x) = \frac{1}{\|\grad f(x)\|_x^2} \grad f(x)$.
    Let $\hat W(x) = \beta(x) W(x)$, understood to be identically zero where $\beta$ is so.
    In particular, $\hat W$ is smooth because it is zero in a neighborhood of the only critical point of $f$ (which is where $W$ loses smoothness). 
    The flows on $W$ (our target) and on $\hat W$ coincide in the region $\{ x : a \leq f(x) \leq b \}$.
    Moreover, $\hat W$ is
    compactly supported (because the sublevel sets of $f$ are compact),
    hence its trajectories are smoothly defined for all times~\citep[Thm.~9.16]{lee2012smoothmanifolds}.
    Together with the preliminary observation above and the fact that $(a, b)$ can be taken arbitrarily close to $(0, \infty)$, we find that $t \mapsto \nu(x, t)$ is defined for all $t$ such that $f(\nu(x, t))$ is in $(0, \infty)$, that is, for all $t \in (-f(x), \infty)$.
    The map $\nu$ is smooth on this domain by the fundamental theorem of flows~\citep[Thm.~9.12]{lee2012smoothmanifolds}

    A trajectory accumulates only at points where $f = 0$ because $\lim_{t \to -f(x)} f(\nu(x, t)) = 0$; 
    but those are global minimizers hence critical, and $x^*$ is the only critical point.
    Thus, for $t \to -f(x)$, the trajectory stays in a compact set and its single accumulation point is the origin.
    Therefore, it converges to that point.
\end{proof}

\section{Global transporter of tangent vectors} \label{app:transporter}

In the proof of Theorem~\ref{thm:pitrivial}, we use the following technical fact from differential geometry, applied to $\calN = S$.
The map $T$ is sometimes called a \emph{transporter}; the construction below matches~\citep[\S10.5, Prop.~10.66]{boumal2020intromanifolds}.


\begin{lemma} \label{lem:transporter}
  Let $\calN$ be a smooth manifold.
  There exists a smooth map
  \begin{align*}
    T \colon \T\calN \times \calN \to \T\calN \colon ((x, v), y) \mapsto T_{y \leftarrow x}(v)
  \end{align*}
  with the following properties:
  \begin{enumerate}
    \item $v \mapsto T_{y \leftarrow x}(v)$ is a linear map from $\T_{x} \calN$ to $\T_{y} \calN$ for all $x, y \in \calN$, and
    \item $T_{x \leftarrow x}(v) = v$ for all $(x, v) \in \T \calN$.
  \end{enumerate}
  In particular, $V(y) := T_{y \leftarrow x}(v)$ defines a smooth vector field on $\calN$ such that $V(x) = v$.
\end{lemma}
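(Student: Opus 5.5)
The plan is to embed $\calN$ in a Euclidean space and use orthogonal projection onto tangent spaces. By the Whitney embedding theorem \citep[Thm.~6.15]{lee2012smoothmanifolds}, there is a smooth embedding $\iota \colon \calN \to \reals^d$ for some $d$. Through $\D\iota(x)$, we identify each tangent space $\T_x\calN$ with a linear subspace $E_x \subseteq \reals^d$, and $\T\calN$ with $\{(x, u) : u \in E_x\}$. Let $P_x \colon \reals^d \to \reals^d$ be the orthogonal projector onto $E_x$. Then define
\begin{align*}
  T_{y \leftarrow x}(v) = \D\iota(y)^{-1}\!\big[P_y\, \D\iota(x)[v]\big],
\end{align*}
where $\D\iota(y)^{-1}$ is understood as the inverse of $\D\iota(y) \colon \T_y\calN \to E_y$. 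Equivalently, we push $v$ into $\reals^d$, project it onto the tangent space at $y$, and pull it back.

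The two algebraic properties are then immediate. The map is linear in $v$ because it is a composition of linear maps. When $y = x$, the vector $\D\iota(x)[v]$ already lies in $E_x$, so $P_x$ fixes it and $T_{x \leftarrow x}(v) = v$. The final claim follows by fixing $(x, v)$ and restricting $T$ to the smooth slice $\{(x,v)\} \times \calN$. This gives a smooth map $y \mapsto T_{y\leftarrow x}(v) \in \T_y\calN$, which is a smooth vector field with $V(x) = v$.

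The main point requiring care is \textbf{smoothness} of $T$ jointly in $((x,v), y)$. The map $(x, v) \mapsto \D\iota(x)[v] \in \reals^d$ is smooth on $\T\calN$, since it is the differential of a smooth map. For $y \mapsto P_y$, I would argue locally. Near any $y_0$, take a chart $\phi$ and set $B(y) = \D\iota(y)\circ \D\phi^{-1}(\phi(y)) \in \reals^{d \times m}$. This is smooth and has full column rank, so $P_y = B(y)(B(y)^\top B(y))^{-1}B(y)^\top$ is smooth. Since $P_y$ is intrinsically defined, the local formulas agree on overlaps, and $y \mapsto P_y$ is globally smooth.

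Finally, the pullback $\D\iota(y)^{-1}$ applied to a smooth $\reals^d$-valued function $w(y)$ with $w(y) \in E_y$ gives a smooth section of $\T\calN$. In chart components it equals $(B^\top B)^{-1}B^\top w$, which is smooth in the induced coordinates of $\T\calN$. Composing these pieces gives smoothness of $T$.

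An alternative avoiding embeddings is to work in local trivializations and glue with a partition of unity. However, identity on the diagonal must then be preserved by the gluing. This works because a convex combination of maps that are each the identity at $y = x$ is still the identity there, but the embedding route is cleaner.
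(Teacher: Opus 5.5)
Your proposal is correct and follows essentially the same route as the paper: embed $\calN$ in a Euclidean space via Whitney's theorem and define $T_{y \leftarrow x}(v)$ as the orthogonal projection of $v$ onto $\T_y\calN$. The differences are cosmetic: you check smoothness of $y \mapsto P_y$ through a local parametrization, $P_y = B(y)(B(y)^\top B(y))^{-1}B(y)^\top$, rather than through the paper's local defining function, $\Proj_y = I - \D h(y)^\dagger \D h(y)$. You also make the identification through $\D\iota$ explicit, where the paper leaves it implicit.
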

\begin{proof}
  There are many ways to build such a map.
  A brief argument goes as follows:
  \begin{enumerate}
    \item If not already the case, embed $\calN$ into a Euclidean space $\calE$ (say, $\Rd$ with $d = 2\dim\calN+1$ using Whitney's embedding theorem~\citep[Thm.~6.15]{lee2012smoothmanifolds}).

    \item Let $\Proj_x$ be the orthogonal projector (with respect to the Euclidean metric) from $\calE$ to $\T_x \calN$ (as a linear subspace of $\calE$): this depends smoothly on $x$.
    
    Indeed, for each $\bar{x} \in \calN$ we can choose a neighborhood $U$ of $\bar{x}$ in $\calE$ and a smooth local defining function $h \colon U \to \reals^{\dim\calE - \dim\calN}$ such that $\calN \cap U = h^{-1}(0)$ and $\D h(x)$ is surjective for all $x \in U$. Then, $\T_x\calN = \ker \D h(x)$ for all $x \in \calN \cap U$ and therefore $\Proj_x = I_{\calE} - \D h(x)^\dagger \circ \D h(x) = I_{\calE} - \D h(x)^* \circ (\D h(x) \circ \D h(x)^*)^{-1}  \circ \D h(x)$.

    \item Define $T_{y \leftarrow x}(v) = \Proj_y(v)$, where $v \in \T_x\calN$ is seen as a vector in $\calE$.
    \qedhere
  \end{enumerate}
\end{proof}

\section{Stabilizing $S$ by $\reals^k$ yields $\reals^n$}\label{app:deeptheorems}

This appendix outlines a proof of Theorem~\ref{thm:deepthm2}.
It relies on the following known result, which follows from a long line of classical works.

\begin{theorem}[\citet{stallings1962piecewise,HuschPrice1970,Perelman20022003}]\label{thm:deepthm1}
Let $S$ be a (non-empty) contractible smooth manifold.
\begin{itemize}
\item[(a)] If $\dim(S) \leq 2$, then $S$ is diffeomorphic to $\reals^{\dim(S)}$.
\item[(b)] If $\dim(S) = 3$ or $\dim(S) \geq 5$, and $S$ is simply connected at infinity (see Remark~\ref{rem:simplyconnectedatinfinity}), then $S$ is diffeomorphic to $\reals^{\dim(S)}$.
\end{itemize}
\end{theorem}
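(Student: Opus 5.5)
The plan is to assemble Theorem~\ref{thm:deepthm1} from the classical literature, separating low and high dimensions. This is an appendix result credited to prior work, so we outline the chain of citations rather than reprove the deep inputs.

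\textbf{Part (a).} If $\dim S = 0$, a contractible (hence connected) $0$-manifold is a point. If $\dim S = 1$, the classification of $1$-manifolds~\citep[Pb.~15-13]{lee2012smoothmanifolds} says a connected $1$-manifold is $\Sone$ or $\reals$. The circle is not contractible, so $S \cong \reals$. If $\dim S = 2$, $S$ is a connected, simply connected surface. It is noncompact: the only closed simply connected surface is $\Stwo$, which is not contractible (see also Appendix~\ref{app:compactimpliespoint}). Uniformization, or the classification of noncompact simply connected surfaces, then gives a homeomorphism to $\reals^2$. Since smooth structures on $\reals^2$ are unique up to diffeomorphism (smoothing theory in dimension $\leq 3$), $S \cong \reals^2$ smoothly.

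\textbf{Part (b), dimension $\geq 5$.} Apply Stallings' theorem~\citep{stallings1962piecewise}: a contractible PL (equivalently smooth, via Whitehead triangulations) manifold of dimension $\geq 5$ that is simply connected at infinity is PL homeomorphic to Euclidean space. Then upgrade PL to smooth. In dimensions $\geq 5$, smooth structures on a PL $\reals^m$ are classified by $[\reals^m, PL/O]$, which is trivial since $\reals^m$ is contractible. Alternatively, invoke the smooth $h$-cobordism version directly (Siebenmann or Stallings' smooth statement). This gives a diffeomorphism.

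\textbf{Part (b), dimension $3$.} Here we combine Husch--Price~\citep{HuschPrice1970} with Perelman~\citep{Perelman20022003}. Husch--Price show that a contractible $3$-manifold that is simply connected at infinity is homeomorphic to $\reals^3$, provided $S$ contains no fake $3$-cells (counterexamples to the Poincaré conjecture). Perelman's resolution of the Poincaré conjecture removes that proviso. Moise's theorem, that topological $3$-manifolds have unique smooth structures, then upgrades the homeomorphism to a diffeomorphism.

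The main obstacle is bookkeeping rather than mathematics: matching categories (TOP/PL/DIFF) correctly in each dimension, and pinning down the exact hypothesis ``simply connected at infinity'' as used in each source (Remark~\ref{rem:simplyconnectedatinfinity}). Dimension $4$ is deliberately excluded, since exotic $\reals^4$'s exist and the smooth upgrade fails there.
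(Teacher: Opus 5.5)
Your proposal is correct and follows essentially the same route as the paper: part (a) by the dimension-by-dimension classification of low-dimensional manifolds, Stallings' theorem for $\dim S \geq 5$, and Husch--Price plus Perelman plus Moise for $\dim S = 3$. The differences are minor. In dimensions $\geq 5$ you make the PL-to-smooth upgrade explicit via smoothing theory (where $[\reals^m, PL/O]$ is trivial), and in dimension $2$ you use uniformization, whereas the paper simply cites Stallings' Thm.~5.1 and Hatcher's Ex.~1B.2.
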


\begin{proof}
For item~(a): if $\dim(S) = 0$, then $S$ is a singleton; if $\dim(S)=1$, see~\citep[Thm.~5.27]{lee2011topological}; and if $\dim(S)=2$, see~\citep[Ex.~1B.2]{hatcher2002algebraictopo}.  
For item~(b):
\begin{itemize}
\item If $\dim(S) \geq 5$, the result follows immediately from~\citet[Thm.~5.1]{stallings1962piecewise}.  
\item If $\dim(S) = 3$, then $S$ is homeomorphic to $\reals^3$ following~\citet{HuschPrice1970} together with Perelman's proof of the Poincaré conjecture~\citep{Perelman20022003}; see also~\citep[Thm.~3.5.3]{Guilbaultsurvey2016}.  
By Moise's theorem~\citep{Moisestheorem1952}, this homeomorphism can be promoted to a diffeomorphism.  \qedhere
\end{itemize}
\end{proof}

Note that item~(b) fails in dimension four, due to the existence of exotic $\reals^4$~\citep[Thm.~8.4C]{FreedmanQuinn1990}.  
For the topological (homeomorphism) statement of Theorem~\ref{thm:deepthm1}, see~\citep{Freedman4manifolds1982,Guilbault1992}.

\begin{proof}[Proof of Theorem~\ref{thm:deepthm2}]
For notational convenience, we write $S$ in place of $\tilde S$ throughout this proof.
The ``only if'' direction is trivial: if $S \times \reals^k$ is diffeomorphic to a linear space, then it is contractible; and it is homotopy equivalent to $S$, so $S$ is contractible as well.

The ``if'' direction is the culmination of results by many authors, and can be split into several cases.
\begin{itemize}
\item $\dim(S) \leq 2$: follows immediately from Theorem~\ref{thm:deepthm1}(a).  

\item $\dim(S) \geq 4$: an immediate consequence of~\citep[Cor.~5.3]{stallings1962piecewise}, itself derived from Theorem~\ref{thm:deepthm1}(b) (using $\dim(S \times \Rk) = \dim(S) + k \geq 5$).

\item $\dim(S) = 3$: \citet[Thm.~5]{luft1987contractible}, building on~\citep{McMillan1961}, showed that $S \times \reals^k$ is piecewise-linearly homeomorphic to $\reals^{3+k}$ provided there are no fake $3$-cells.  
The nonexistence of fake $3$-cells follows from Perelman's proof of the Poincaré conjecture~\citep{Perelman20022003}.  
Finally, by~\citep[Cor.~6.6]{MunkresObstructionstoSmoothing1960}, the piecewise-linear structure can be smoothed, yielding a diffeomorphism.   \qedhere 
\end{itemize}
\end{proof}

\begin{remark}[Simply connected at infinity]\label{rem:simplyconnectedatinfinity}
The assumption of simple connectivity at infinity in Theorem~\ref{thm:deepthm1}(b) is essential: the Whitehead manifold is a contractible $3$-manifold not homeomorphic to $\reals^3$ precisely because it is not simply connected at infinity.  

Formally, a space $X$ is \emph{simply connected at infinity}~\citep{stallings1962piecewise} if, for every compact set $C \subseteq X$, there exists a compact $D \supseteq C$ such that every loop in $X \setminus D$ can be contracted to a point within $X \setminus C$.  
For example:
\begin{itemize}
\item $\reals^n$ ($n \geq 3$), or $\reals^n$ with finitely many points removed, is simply connected at infinity.  
\item $\reals^2$, the cylinder, and the Whitehead manifold are \emph{not} simply connected at infinity.  
\end{itemize}
\end{remark}

\section{Contractible manifolds that are compact are singletons}\label{app:compactimpliespoint}

Let $S$ be a non-empty, compact and contractible smooth manifold without boundary.
What are such spaces?
A point is certainly one example.
What are other examples?
A closed ball is \emph{not} an example since it has a boundary.
A sphere is also \emph{not} an example since it is not contractible.
It turns out single points are the \emph{only} examples.
This is a well-known fact (see for example~\citep[Ex.~2.4.6, p.~83]{guillemin1974difftop}).
We sketch a proof here.

\begin{proposition}\label{prop:compactimpliessingleton}
Let $S$ be a compact and contractible topological manifold (non-empty, without boundary).
Then $S$ is a point.
\end{proposition}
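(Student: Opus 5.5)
We plan to use homology with $\mathbb{Z}/2$ coefficients, which needs no orientability hypothesis. Let $n = \dim S$. Since $S$ is contractible, it is path-connected and homotopy equivalent to a point. Hence $H_j(S;\mathbb{Z}/2) = 0$ for all $j \geq 1$, and $H_0(S;\mathbb{Z}/2) \cong \mathbb{Z}/2$.

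On the other hand, $S$ is a compact, connected topological manifold without boundary. For such a manifold the top homology group with $\mathbb{Z}/2$ coefficients is nonzero, $H_n(S;\mathbb{Z}/2) \cong \mathbb{Z}/2$, because every closed manifold is $\mathbb{Z}/2$-orientable and so has a $\mathbb{Z}/2$ fundamental class. This is the standard result, e.g., Hatcher, Thm.~3.26. We will simply cite it.

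Combining the two facts: if $n \geq 1$, then $\mathbb{Z}/2 \cong H_n(S;\mathbb{Z}/2) = 0$, a contradiction. Therefore $n = 0$. A $0$-dimensional manifold is a discrete space, and a compact discrete space is finite. Contractibility implies connectedness, so $S$ is a single point (it is non-empty by assumption).

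The only nontrivial ingredient is the existence of the $\mathbb{Z}/2$ fundamental class for closed topological manifolds; everything else is immediate. If one prefers to stay within the smooth setting relevant to the paper, an alternative is available. When $n \geq 1$, the identity map of $S$ has mod-2 degree $1$. But it is homotopic to a constant map, which is not surjective and so has mod-2 degree $0$. Homotopy invariance of mod-2 degree for maps between compact boundaryless manifolds (Guillemin--Pollack) gives the contradiction.
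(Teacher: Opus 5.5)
Your proof is correct and follows the same route as the paper: it compares the nonzero top homology of a closed connected $n$-manifold (Hatcher, Thm.~3.26) with the vanishing higher homology of a point. The only difference is that you use $\mathbb{Z}/2$ coefficients, which removes the paper's intermediate step ``contractible $\Rightarrow$ simply connected $\Rightarrow$ orientable'' (Hatcher, Prop.~3.25) needed to get $H_n(S;\mathbb{Z})\cong\mathbb{Z}$; your mod-2 degree alternative is essentially the Guillemin--Pollack exercise the paper cites.
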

\begin{proof}
Let $n$ denote the dimension of $S$.
Since $S$ is contractible, it is simply connected, and so orientable~\citep[Prop.~3.25]{hatcher2002algebraictopo} (see also Section~\ref{sec:commentcontractible}).
As $S$ is also ``closed'' (compact without boundary), the top homology group of $S$ is $H_n(S) = \mathbb{Z}$~\citep[Thm.~3.26]{hatcher2002algebraictopo}.
Owing to contractibility again, $S$ has the same homology groups as a point, because homology groups are invariant under homotopy equivalence~\citep[\S2.1]{hatcher2002algebraictopo}.
The homology groups of a point are $H_0 = \mathbb{Z}$ and $H_k = 0$ for $k \geq 1$.
If $n \geq 1$, we conclude that $\mathbb{Z} = H_n(S) = 0$: a contradiction.
Thus, $n=0$ and hence $S$ is a collection of points.
Since $S$ is connected (by contractibility), $S$ must be a single point.
\end{proof}

\begin{proof}[Proof of Corollary~\ref{cor:compactSsingleton}]
By Lemma~\ref{lem:MB} and Proposition~\ref{prop:picontinuous}, $S$ is a contractible smooth manifold because $\calM$ is contractible.
If $S$ is compact, Proposition~\ref{prop:compactimpliessingleton} implies $S$ is a singleton.
\end{proof}

\section{Remarks related to knot theory}\label{app:knottheory}

One of the open questions listed in Section~\ref{sec:conclusions} asks: what may $S$ look like as an \emph{embedded} submanifold of $\calM$?
(This is different from asking what $S$ is diffeomorphic to, as answered by Corollary~\ref{cor:equivonS}.)

Let us expand on this question.
Assume that $\calM$ is contractible.
Theorems~\ref{thm:plnonlinlstsq} and~\ref{thm:constructingglobalPLcor}
together imply that a properly embedded submanifold
$S \subseteq \calM$ arises as the minimizer set of a globally \pl{}
function if and only if there exists a diffeomorphism
\begin{align}\label{eq:tobesatisfiedbypsi}
\psi \colon \calM \to S \times \reals^k
\quad \text{with} \quad
\psi(S)=S\times\{0\}.
\end{align}
What topological conditions on the embedding
$S \subseteq \calM$ guarantee---or rule out---the existence of such a
diffeomorphism $\psi$?

A first necessary condition follows from the topology of the complement.
If a diffeomorphism $\psi$ satisfying~\eqref{eq:tobesatisfiedbypsi} exists, then
\[ \calM \setminus S \;\cong\; (S\times\reals^k)\setminus(S\times\{0\}) \;\cong\; S\times(\reals^k\setminus\{0\}). \]
Assume the codimension of $S$ satisfies $k = \codim(S) \geq 2$.
Then, $\calM \setminus S$ is path-connected and the fundamental groups ($\pi_1$) obey
\[ \pi_1(\calM\setminus S) \;\cong\; \pi_1(S)\times\pi_1(\reals^k\setminus\{0\}) \;\cong\; \pi_1(S)\times\pi_1(\mathbb{S}^{k - 1}), \]
where $\mathbb{S}^{k - 1}$ denotes the sphere of dimension $k-1$.
Since $S$ must itself be contractible, this yields the necessary condition
\begin{equation}\label{eq:complementcondition}
\pi_1(\calM\setminus S)\;\cong\;\pi_1(\mathbb{S}^{k - 1}),
\end{equation}
which is trivial for $k\ge 3$ and isomorphic to $\mathbb Z$ for $k=2$.

Consequently, any embedding $S \subseteq \calM$ violating~\eqref{eq:complementcondition}
cannot arise as the minimizer set of a globally \pl{} function.
For example, a nontrivial long knot\footnote{A \emph{long knot} is a proper smooth embedding
$\reals \hookrightarrow \reals^3$ that agrees with a fixed linear
embedding outside a compact set~\citep{budneylongknots2007}.}
in $\reals^3$ has complement with
nonabelian fundamental group~\citep[Ch.~3, 4]{rolfsen1976knots}:
it cannot occur as such a minimizer set, since
\eqref{eq:complementcondition} would force the fundamental group of the complement to be $\mathbb{Z}$, which is
abelian---see also~\citep[Ex.~9 in \S8.1, p.~183]{hirsch1976differential}.

It is natural to ask whether the complement condition
\eqref{eq:complementcondition} is also sufficient for the existence of
a diffeomorphism $\psi$ satisfying~\eqref{eq:tobesatisfiedbypsi}.
We suspect that this is not the case in full generality, and leave a
precise characterization of admissible embeddings as an open problem.

A related question concerns the role of codimension.
If $S$ is contractible and its codimension $k$ is sufficiently large,
does a diffeomorphism $\psi$ satisfying~\eqref{eq:tobesatisfiedbypsi} always exist?
The example of long knots provides useful intuition:
while embeddings $\reals \hookrightarrow \reals^3$ may be knotted,
embeddings of $\reals \hookrightarrow \reals^4$ can be untangled up to ambient isotopy.
This suggests that low-codimension obstructions may disappear in higher
codimension in this context too.



\bibliographystyle{abbrvnat}
\bibliography{boumal,bibliography}

\end{document}